\numberwithin{equation}{section}
\newtheorem{thm}{Theorem}[section]
\newtheorem{lem}[thm]{Lemma}
\newtheorem{Prop}[thm]{Proposition}
\newtheorem{Rem}[thm]{Remark}
\newcommand{\R}{\mathbb{R}}
\newcommand\cq{\mathcal{Q}}
\begin{document}
	\baselineskip=14pt
	\title[Critical Hartree equation]{ Existence of infinitely many solutions for a critical Hartree type equation with potential : local Poho\v{z}aev identities methods}
	
	\author[D. Cassani]{Daniele Cassani}
	\author[M. Yang]{Minbo Yang$^*$}
	\author[X. Zhang]{Xinyun Zhang$^{**}$}
	
	\address{Daniele Cassani, 
	\newline\indent Dipartimento di Scienza e Alta Tecnologia,
	\newline\indent Universit\`{a} degli Studi dell'Insubria
	\newline\indent and
	\newline\indent RISM-Riemann International School of Mathematics
	\newline\indent Villa Toeplitz, Via G.B. Vico, 46 - 21100 Varese, Italy}
	\email{\href{mailto:daniele.cassani@uninsubria.it}{daniele.cassani@uninsubria.it}}
	
	\address{Minbo Yang \and  Xinyun Zhang \newline\indent Department of Mathematics, Zhejiang Normal University, \newline\indent
		Jinhua 321004, People's Republic of China}
	\email{\href{mailto:mbyang@zjnu.edu.cn}{mbyang@zjnu.edu.cn}}
	\email{\href{mailto:xyzhang@zjnu.edu.cn}{xyzhang@zjnu.edu.cn}}	
	
	\subjclass[2020]{35J20, 35J60, 35A15}
	\keywords{Multiplicity of solutions; non-local equations; Choquard equation; finite dimensional reduction methods.}
	
	\thanks{$^*$Minbo Yang is partially supported by NSFC (11971436, 12011530199) and ZJNSF(LZ22A010001).}
	\thanks{$^{**}$Xinyun Zhang is the corresponding author.}
	
	\begin{abstract}
		This paper deals with the following equation
		$$
		-\Delta u
		=K(|x'|, x'')\Big(|x|^{-\alpha}\ast
		(K(|x'|, x'')|u|^{2^{\ast}_{\alpha}})\Big)|u|^{2^{\ast}_{\alpha}-2}u\hspace{4.14mm}\mbox{in}\hspace{1.14mm} \mathbb{R}^N,
		$$
		where $N\geq5$, $\alpha>5-\frac{6}{N-2}$, $2^{\ast}_{\alpha}=\frac{2N-\alpha}{N-2}$ is the so-called upper critical exponent in the Hardy-Littlewood-Sobolev inequality and $K(|x'|, x'')$, where $(x',x'')\in \mathbb{R}^2\times\mathbb{R}^{N-2}$, is bounded and nonnegative. Under proper assumptions on the potential function $K$, we obtain the existence of  infinitely many solutions for the nonlocal critical equation by using a finite dimensional reduction argument and local Poho\v{z}aev identities. It is a remarkable fact that the order of the Riesz potential influences the existence/non-existence of solutions. 	
	\end{abstract}
	
	\maketitle
	
	\section{Introduction and main results}	
\noindent 	We consider the following equation
		\begin{equation}\label{eq1}
		-\Delta u
		=K(|x'|, x'')\Big(|x|^{-\alpha}\ast
		(K(|x'|, x'')|u|^{2^{\ast}_{\alpha}})\Big)|u|^{2^{\ast}_{\alpha}-2}u\hspace{4.14mm}\mbox{in}\hspace{1.14mm} \mathbb{R}^N, \ u>0,\ u\in D^{1,2}(\mathbb{R}^N)
		\end{equation}
		where $N\geq5$, $\alpha>5-\frac{6}{N-2}$, $2^{\ast}_{\alpha}=\frac{2N-\alpha}{N-2}$ is the so-called upper critical exponent in the Hardy-Littlewood-Sobolev inequality and $K(|x'|, x'')$, where $(x',x'')\in \mathbb{R}^2\times\mathbb{R}^{N-2}$, is bounded and nonnegative. $D^{1,2}(\mathbb{R}^N)$ is the completion of $C_{0}^{\infty}(\mathbb{R}^N)$ with respect to the norm $\| \cdot \|=\|\nabla \cdot \|_2$.
	Problem \eqref{eq1} is related to the Choquard equation
	\begin{equation}\label{Cho}
		-\Delta u + V(x)u
		=\Big(|x|^{-\alpha}\ast
		|u|^{p}\Big)u^{p-1}\hspace{4.14mm}\mbox{in}\hspace{1.14mm} \mathbb{R}^N,
	\end{equation}
in the zero mass case ($V\equiv0$) and where $K\equiv 1$. Equation \eqref{Cho} shows up in many different  fields of mathematical physics. For $\alpha = 1,p = 2$, it goes back to the description of the quantum theory of a polaron at rest by Pekar\cite{P1} in 1954 and the modeling of an electron trapped in its own hole in 1976 in the work of Choquard, as a certain approximation to the Hartree-Fock theory of one-component plasma. Lieb \cite{Lieb1} proved the existence and uniqueness of the ground state by rearrangement techniques when $\alpha = 1,p = 2$ and $V$ is a positive constant. Lions\cite{Ls} proved the existence of a sequence of radially symmetric solutions by variational methods. See \cite{MS1,MS2,guide,CZ,CT,R} and references therein for more details.
	
\noindent 	In order to make precise the notion of criticality, let us recall from  \cite[Theorem 4.3]{LL} the Hardy-Littlewood-Sobolev (HLS) inequality 
	\begin{Prop}\label{pro1.1}
		Let $t,\,r>1$ and $0<\alpha<N$ be such that $\frac{1}{t}+\frac{\alpha}{N}+\frac{1}{r}=2$.
		Then, there exists a constant $C(N,\alpha,t)$ such that, for $f\in L^{t}(\R^N)$
		and $h\in L^{r}(\R^N)$,
		$$
		\left|\int_{\R^{N}}\int_{\R^{N}}\frac{f(x)h(y)}{|x-y|^{\alpha}}dxdy\right|
		\leq C(N,\alpha,t) |f|_{L^t(\R^N)}|h|_{L^r(\R^N)}.
		$$
		If $t=r=2N/(2N-\alpha)$, then
		$$
		C(t,N,\alpha,r)=C(N,\alpha)=\pi^{\frac{\alpha}{2}}\frac{\Gamma(\frac{N}{2}-\frac{\alpha}{2})}{\Gamma(N-\frac{\alpha}{2})}\left\{\frac{\Gamma(\frac{N}{2})}{\Gamma(N)}\right\}^{-1+\frac{\alpha}{N}}.
		$$
		and the equality holds if and only if $f\equiv Ch$ and
		$$
		h(x)=A(\gamma^{2}+|x-a|^{2})^{-(2N-\alpha)/2}
		$$
		for some $A\in \mathbb{R}$, $0\neq\gamma\in\mathbb{R}$ and $a\in \mathbb{R}^{N}$.
	\end{Prop}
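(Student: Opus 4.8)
The plan is to establish the statement in two stages: first the inequality with \emph{some} finite constant $C(N,\alpha,t)$ for all admissible exponents, and then, in the conformally invariant diagonal case $t=r=2N/(2N-\alpha)$, the identification of the sharp constant $C(N,\alpha)$ together with its extremizers. For the first stage I would read the left-hand side as the dual pairing $\int_{\R^N} f\,(|x|^{-\alpha}\ast h)$ and note that the Riesz kernel $|x|^{-\alpha}$ belongs to the weak Lebesgue space $L^{N/\alpha,\infty}(\R^N)$, since $|\{|x|^{-\alpha}>\lambda\}|$ is a constant times $\lambda^{-N/\alpha}$. The scaling identity $\frac1t+\frac{\alpha}{N}+\frac1r=2$ is precisely the one that makes the weak Young convolution inequality applicable: setting $\frac1s=1-\frac1t$ one has $1+\frac1s=\frac{\alpha}{N}+\frac1r$, so $|x|^{-\alpha}\ast h\in L^{s}(\R^N)$ with norm bounded by a constant times $|h|_{L^r}$, and Hölder's inequality against $f\in L^t$ closes the estimate. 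The weak Young inequality itself I would obtain from the Marcinkiewicz interpolation theorem, by truncating the kernel at a radius $\rho$ into an $L^1$-piece near the origin and a bounded tail and optimizing in $\rho$; equivalently one may use the layer-cake representation of $|x|^{-\alpha}$ together with the Riesz rearrangement inequality for convolutions of indicator functions. This produces a finite, though non-sharp, constant depending only on $N$, $\alpha$, $t$.

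For the sharp diagonal case I would follow Lieb's rearrangement approach. By the Riesz rearrangement inequality,
\[
\int_{\R^{N}}\int_{\R^{N}}\frac{f(x)h(y)}{|x-y|^{\alpha}}\,dx\,dy\ \le\ \int_{\R^{N}}\int_{\R^{N}}\frac{f^{\ast}(x)h^{\ast}(y)}{|x-y|^{\alpha}}\,dx\,dy,
\]
where $f^{\ast},h^{\ast}$ are the symmetric decreasing rearrangements, while $|f^{\ast}|_{L^t}=|f|_{L^t}$ and $|h^{\ast}|_{L^r}=|h|_{L^r}$; hence the supremum of the quotient is attained, if at all, along nonnegative radially symmetric nonincreasing functions. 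When $t=r=2N/(2N-\alpha)$ the bilinear form and both norm constraints are invariant under the conformal group of $S^N$ acting via stereographic projection — translations, $L^r$-preserving dilations $h(x)\mapsto\tau^{(2N-\alpha)/2}h(\tau x)$, and the inversion $h(x)\mapsto|x|^{-(2N-\alpha)}h(x/|x|^{2})$ — which is the source of the lack of compactness. The crucial point is then to show a maximizer exists: I would either run a concentration-compactness argument after fixing the dilation scale by a normalization, excluding vanishing and dichotomy by means of the strict form of the rearrangement inequality, or, more efficiently, invoke the competing-symmetries method of Carlen and Loss, alternately applying spherical decreasing rearrangement and the conformal inversion and proving that the iterates converge, monotonically along the functional, to a fixed point of the combined symmetrization.

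Once a maximizer $h$ is in hand, one examines its Euler--Lagrange equation, which after the normalization takes the form $|x|^{-\alpha}\ast h=\lambda\,h^{\,r-1}$ with $r-1=\frac{\alpha}{2N-\alpha}$ and $h\ge 0$ radial; by the classification of positive solutions of this integral equation — via the method of moving planes in integral form, in the spirit of the results of Caffarelli--Gidas--Spruck and Chen--Li--Ou — the solutions are exactly the conformal bubbles $A(\gamma^{2}+|x-a|^{2})^{-(2N-\alpha)/2}$. Plugging the profile $(\gamma^{2}+|x|^{2})^{-(2N-\alpha)/2}$ back into the quotient and evaluating the resulting Beta-type integrals over $\R^N$ yields the explicit value of $C(N,\alpha)$ in terms of Gamma functions, and the equality-case statement follows by tracing back through the rearrangement step (using its strict version to exclude non-symmetric maximizers). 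I expect the genuine difficulty to lie in the compactness step — ensuring the maximizing sequence neither spreads to infinity nor splits into distant pieces — since the conformal non-compactness must be tracked explicitly; by contrast, the final identification of $C(N,\alpha)$ is a lengthy but essentially mechanical computation with the explicit extremal.
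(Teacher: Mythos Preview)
Your outline is a correct and standard route to the Hardy--Littlewood--Sobolev inequality with sharp constant: weak Young for a finite constant, Riesz rearrangement to reduce to radial nonincreasing competitors, a compactness argument (concentration--compactness or Carlen--Loss competing symmetries) to produce a maximizer in the diagonal case, the Euler--Lagrange equation and its classification via moving planes, and finally the Beta-integral evaluation of $C(N,\alpha)$. This is essentially Lieb's original strategy as presented in the Lieb--Loss reference.

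However, the paper does not prove Proposition~\ref{pro1.1} at all. It is stated as a quotation of a known result, introduced with ``let us recall from \cite[Theorem 4.3]{LL}'', and no argument is supplied; the proposition serves only as background to define the critical exponent $2^{\ast}_{\alpha}$ and the best constant $S_{H,L}$. So there is nothing in the paper to compare your proof against: you have written a genuine proof sketch where the authors simply cite the literature.
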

	
\noindent 	According to Proposition \ref{pro1.1}, the functional
	$$
	\int_{\R^{N}}\int_{\R^{N}}\frac{|u(x)|^{p}|v(y)|^{p}}{|x-y|^{\alpha}}dxdy
	$$
	turns out to be well defined in $H^1(\R^N)\times H^1(\R^N)$, provided $p$ ranges between the lower critical HLS exponent (see \cite{CVZ}) and the upper critical HLS exponent, namely $\frac{2N-\alpha}{N}\leq p\leq\frac{2N-\alpha}{N-2}=:2_{\alpha}^{*}$.
		A nonlocal version of the Sobolev inequality which originates from Proposition \ref{pro1.1} is the following 
	\begin{equation}
		\left(\int_{\mathbb{R}^{N}} \int_{\mathbb{R}^{N}} \frac{|u(x)|^{2_{\alpha}^{*}}|u(y)|^{2_{\alpha}^{*}}}{|x-y|^{\alpha}} d x d y\right)^{\frac{1}{2_{\alpha}^{*}}} \leqslant S_{H, L}\int_{\mathbb{R}^{N}}|\nabla u|^{2} d x.
	\end{equation}
	From \cite{GY}, the best constant $S_{H,L}$ is given by
	$$
	S_{H, L}:=\inf _{u \in D^{1,2}\left(\mathbb{R}^{N}\right) \backslash\{0\}} \frac{\int_{\mathbb{R}^{N}}|\nabla u|^{2} d x}{\left(\int_{\mathbb{R}^{N}} \int_{\mathbb{R}^{N}} \frac{|u(x)|^{2_{\alpha}^{*}}|u(y)|^{2_{\alpha}^{*}}}{|x-y|^{\alpha}} d x d y\right)^{\frac{1}{2_{\alpha}^{\ast}}}}
	$$
	and achieved if and only if $u$ has the from
	$$
	u=C\left(\frac{\lambda}{\lambda^{2}+|x-z|^{2}}\right)^{\frac{N-2}{2}}.
	$$
	Furthermore, the authors in \cite{GY} also proved the relationship between the best Sobolev constant $S$ and the best constant $S_{H, L}$, namely
	$$
	S_{H, L}=\frac{S}{C^{\frac{1}{2_{\alpha}^{\ast}}}(N, \alpha)}.
	$$
	
\noindent 	Notice that the local version of problem \eqref{eq1} is exactly the prescribed scalar curvature problem on $\mathbb{S}^{N}$, which by using the stereographic projection, can be written as follows 
	\begin{equation}\label{eq2}
		-\Delta u={K}u^{\frac{N+2}{N-2}},\hspace{0.5mm}u>0,\hspace{0.5mm}u\in D^{1,2}(\mathbb{R}^N).
	\end{equation}
	In the last four decades, the problem of determining conditions on $K$ to have solutions, has been widely studied, see \cite{BC,BG,CL,CY,H,DLY,CNY,L2,L3,YAN} and references therein. 
	There are also some multiplicity results concerning \eqref{eq2}. In the case $N\geq3$ and $K(x)$ is periodic in one variable, Li\cite{L1} proved the existence of infinitely many solutions of \eqref{eq2}. When $K(x)$ is a positive radially symmetric function with a strict local maximum at $|x|=r_{0}>0$ and satisfies as $r\to r_0$
	\begin{equation}\label{k}
		K(r)=K(r_{0})-c_{0}|r-r_{0}|^{\kappa}+O(|r-r_{0}|^{\kappa+\theta}),
	\end{equation}
for constants $c_{0}>0, \theta>0, \kappa\in[2,N-2)$. Wei and Yan\cite{WY1} developed a technique which enables one to use finite dimensional reduction arguments and constructed infinitely many non-radial positive solutions whose energy is arbitrary large. In \cite{GL}, Guo and Li considered the polyharmonic operator in place of the laplacian and obtained infinitely many solutions. Li, Wei and Xu \cite{LWX} obtained the existence of multi-bump solutions and then local uniqueness and periodicity of those solutions were proved in \cite{DLY}. The fractional case was considered by Guo et al. \cite{GN,GNNT} in the same setting of \cite{WY1}. In \cite{PWW}, Peng, Wang and Wei proved the existence of infinitely many solutions under more general conditions on $K$ (proposed by \cite{PWY}) which allow for saddle points to be stable critical points of $K$. Guo, Liu and Peng\cite{GLP} considered the existence and non-degeneracy of positive  multi-bubble solutions to critical elliptic systems of Hamiltonian type with $K$ satisfying \eqref{k}. Recently, Guo, Musso, Peng and Yan\cite{GMPY} proved the non-degeneracy of the positive bubble solutions and built a new type of solutions by gluing a large number of bubbles. 
Guo et al.\cite{GHLN} glued together bubbles with different concentration rates and constructed further solutions. 
	
	\medskip
	
\noindent Passing from the semilinear case \eqref{eq2} to the nonlocal Hartree case \eqref{eq1}, the situation becomes more complicated. In order to construct multi-bubble solutions to the Hartree equation, some delicate argument has been used by Gao et al. in \cite{GMYZ}, where the authors construct infinitely many solutions for the critical Hartree equation with axisymmetric potentials, namely 
	$$
	-\Delta u+ V(|x'|,x'')u
	=\Big(|x|^{-4}\ast |u|^{2}\Big)u\hspace{4.14mm}\mbox{in}\hspace{1.14mm} \mathbb{R}^6,
	$$
	where $(x',x'')\in \mathbb{R}^2\times\mathbb{R}^{4}$ and $V(|x'|, x'')$ is a bounded nonnegative function. 
	To the best of our knowledge no results are known for \eqref{eq1}. Inspired by \cite{GMYZ,PWW}, we aim at constructing multi-bubble positive solutions of 
	\begin{equation}\label{CFL}
		-\Delta u
		=K(|x'|, x'')\Big(|x|^{-\alpha}\ast K(|x'|, x'')|u|^{2^{\ast}_{\alpha}}\Big)u^{2^{\ast}_{\alpha}-1}\hspace{4.14mm}\mbox{in}\hspace{1.14mm} \mathbb{R}^N,
	\end{equation} 
    where $N\geq 5$, $\alpha>5-\frac{6}{N-2}$ and we consider the case $K(x)=K(|x'|, x'')=K(r,x'')$, where $(x',x'')\in \mathbb{R}^2\times\mathbb{R}^{N-2}$. We further assume that $K$ is bounded and satisfies the following conditions:
	\begin{itemize}
		\item[$\textbf{(K1)}$]  The function $K (r, x'')$ has a critical point $(r_0, x_0'')$ such that $r_0 > 0$ and $K (r_0, x_0'') =1 $, and $$\deg(\nabla(K(r, x'')),(r_0, x_0'')) \neq 0\ ;$$
		\item[$\textbf{(K2)}$]  The function $K (r, x'') \in  C^{3}(B_{\vartheta }(r_0, x_0'')) $, where $\vartheta >0$ is sufficiently small and $$\Delta K (r_0, x_0'')=\frac{\partial^2 K (r_0, x_0'')}{\partial r^2}+\sum_{i=3}^{N}\frac{\partial^2 K (r_0, x_0'')}{\partial {x_i}^2}<0\ .$$
	\end{itemize}
\medskip	

\noindent The main result of this paper is the following 
	
	\begin{thm}\label{EXS}
		Suppose $N\geq5$ and that $K(x)$ satisfies assumptions $\textbf{(K1)}$ and $\textbf{(K2)}$. Then, problem \eqref{CFL} has infinitely many solutions whose energy can be arbitrarily large.
	\end{thm}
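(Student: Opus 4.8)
The plan is to run a finite-dimensional Lyapunov--Schmidt reduction around a ring of $k$ bubbles and then to produce a critical point of the resulting reduced energy, using $\textbf{(K1)}$ to pin down the concentration set near $(r_0,x_0'')$ and $\textbf{(K2)}$ to fix the concentration rate. Let $U_{\lambda,\xi}(x)=C\big(\lambda/(1+\lambda^{2}|x-\xi|^{2})\big)^{(N-2)/2}$ be the extremal for $S_{H,L}$, which solves the limiting ($K\equiv1$) equation, and, for a large integer $k$, parameters $\bar r$ near $r_0$, $\bar x''$ near $x_0''$ and $\lambda$ in a window $[\delta^{-1}k^{\sigma},\delta k^{\sigma}]$ with $\sigma=\sigma(N,\alpha)$ to be chosen, set
\begin{equation*}
	W_{\bar r,\bar x'',\lambda}=\sum_{j=1}^{k}U_{\lambda,\xi_j},\qquad
	\xi_j=\Big(\bar r\cos\tfrac{2\pi j}{k},\ \bar r\sin\tfrac{2\pi j}{k},\ \bar x''\Big).
\end{equation*}
One works in the subspace $H_s\subset D^{1,2}(\R^N)$ of functions that are invariant under the rotation $x'\mapsto R_{2\pi/k}\,x'$ and even in each coordinate of $x''$; since $K$ shares these symmetries and $W_{\bar r,\bar x'',\lambda}\in H_s$, critical points of the energy functional $I$ restricted to $H_s$ are genuine solutions of \eqref{CFL}, and choosing $k$ large gives the arbitrarily large energy.

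Writing $u=W_{\bar r,\bar x'',\lambda}+\varphi$ with $\varphi$ in the subspace of $H_s$ orthogonal (in the appropriate weighted sense) to $\partial_{\lambda}W$, $\partial_{\bar r}W$ and the $\partial_{\bar x''_i}W$, the equation splits into an auxiliary equation and a bifurcation equation. The auxiliary equation is solved by a contraction mapping in a weighted norm once two ingredients are in place: a sharp estimate of the error $\|\mathcal{E}_{k}\|_{**}$ measuring how far $W_{\bar r,\bar x'',\lambda}$ is from solving \eqref{CFL}, obtained from pointwise bubble-interaction bounds adapted to the double Riesz convolution $K(|x'|,x'')\big(|x|^{-\alpha}\ast(K(|x'|,x'')|u|^{2^{\ast}_{\alpha}})\big)|u|^{2^{\ast}_{\alpha}-2}u$; and the invertibility, uniformly in $k$, of the linearized operator on the orthogonal complement, which rests on the non-degeneracy of the extremal $U_{\lambda,\xi}$ (its linearization having kernel exactly $\mathrm{span}\{\partial_\lambda U,\partial_{\xi_i}U\}$). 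This produces $\varphi=\varphi_{\bar r,\bar x'',\lambda}$, of class $C^{1}$ in the parameters, with $\|\varphi\|$ of strictly higher order than the terms that will survive in the reduced energy.

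It then remains to analyze the reduced functional $\mathcal{F}_{k}(\bar r,\bar x'',\lambda):=I(W_{\bar r,\bar x'',\lambda}+\varphi_{\bar r,\bar x'',\lambda})$, whose critical points give solutions of \eqref{CFL}. Using $K(r_0,x_0'')=1$, the Taylor expansion of $K$ at $(r_0,x_0'')$, and the scaling and decay of the bubbles, one expects an expansion of the schematic shape
\begin{align*}
	\mathcal{F}_{k}(\bar r,\bar x'',\lambda)=k\Big(A_0
	&+A_1\,\frac{\Delta K(r_0,x_0'')}{\lambda^{2}}
	+\big(\text{terms depending on }\nabla K(\bar r,\bar x'')\big)\\
	&\qquad\qquad
	+A_3\,\Big(\frac{k}{\lambda\bar r}\Big)^{\beta(N,\alpha)}
	+\text{h.o.t.}\Big),
\end{align*}
with $A_1,A_3>0$ and a power $\beta(N,\alpha)$ coming from the nearest-neighbour bubble interaction through the nonlocal term. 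The restriction $\alpha>5-\tfrac{6}{N-2}$ is exactly what is needed for the interaction contribution to be of strictly higher order in $\lambda$ than the curvature term $\Delta K(r_0,x_0'')/\lambda^{2}$ (equivalently, for $\beta(N,\alpha)$ to exceed $2$). One then first solves $\partial_{(\bar r,\bar x'')}\mathcal{F}_{k}=0$ for $(\bar r(\lambda),\bar x''(\lambda))$ near $(r_0,x_0'')$ by a Brouwer-degree argument based on $\deg(\nabla K,(r_0,x_0''))\neq0$ from $\textbf{(K1)}$, and afterwards observes that $\lambda\mapsto\mathcal{F}_{k}(\bar r(\lambda),\bar x''(\lambda),\lambda)$ behaves, to leading order, like $A_1\Delta K(r_0,x_0'')\lambda^{-2}+A_3k^{\beta}(\lambda r_0)^{-\beta}$; since $\Delta K(r_0,x_0'')<0$ by $\textbf{(K2)}$, $A_3>0$ and $\beta>2$, this one-variable function has an interior critical point in the admissible $\lambda$-window, which yields a critical point of $\mathcal{F}_{k}$, hence a solution of \eqref{CFL}. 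Letting $k\to\infty$ and noting $\mathcal{F}_{k}\sim k\,A_0\to\infty$ produces infinitely many solutions of arbitrarily large energy.

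The main obstacle is the asymptotic expansion of $\mathcal{F}_{k}$ — concretely, the rigorous evaluation of the bubble--bubble interaction created by the double convolution, which, unlike the local curvature problem \eqref{eq2}, involves two distinct overlap mechanisms (one from each copy of $K|u|^{2^{\ast}_{\alpha}}$ and one from the potential kernel $|x|^{-\alpha}$) whose orders must be tracked simultaneously, and it is the balance of this interaction against $\Delta K/\lambda^{2}$ that forces the lower bound on $\alpha$. A second delicate point, tightly linked to the first, is to make the contraction-step estimate on $\varphi$ sharp enough that the remainder does not overwhelm the surviving terms of the expansion. Once these estimates are secured, the degree argument in $(\bar r,\bar x'')$ together with the one-dimensional critical point search in $\lambda$ is routine.
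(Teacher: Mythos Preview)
Your Lyapunov--Schmidt framework is correct and matches the paper's Section~2, but the paper handles the bifurcation equations in the position variables $(\bar r,\bar x'')$ by a genuinely different device. Rather than expanding the full reduced energy $\mathcal{F}_k$ and differentiating it in $(\bar r,\bar x'')$, the authors use \emph{local Poho\v{z}aev identities}: they test the equation satisfied by $u_m=W+\varphi$ against $\langle x,\nabla u_m\rangle$ and $\partial_{x_i}u_m$ on a fixed annular region $D_\rho$ around $(r_0,x_0'')$, and show that these identities force $\nabla K(\bar r,\bar x'')=o(\lambda^{-1+\epsilon})$; together with the $\lambda$-equation (which they do obtain by testing against $\partial_\lambda Z$), this feeds a degree argument exactly as you describe. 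The payoff is that one only needs the weighted $C^0$ control on $\varphi$ already produced by the contraction step---no $C^1$ dependence of $\varphi$ on $(\bar r,\bar x'')$ is ever invoked---and the relevant boundary and off-centre terms live where $\varphi$ is small. The paper in fact states explicitly that direct differentiation of the reduced functional in $(\bar r,\bar x'')$ ``yields further issues'' here, presumably because the double convolution multiplies the number of cross-terms one would have to track to the required precision.

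Two points in your expansion need correction. First, the leading bubble interaction through the Riesz kernel collapses, via the identity $|x|^{-\alpha}\ast U_{z,\lambda}^{2^*_\alpha}=c\,\big(\lambda/(1+\lambda^2|x-z|^2)\big)^{\alpha/2}$, to the \emph{same} local integral $\int U_{z_1}^{(N+2)/(N-2)}U_{z_j}$ that appears in the scalar-curvature problem; hence the interaction order is $(\lambda|z_1-z_j|)^{-(N-2)}\sim(m/\lambda)^{N-2}$, independent of $\alpha$, and balancing it against the curvature term fixes $\lambda\sim m^{(N-2)/(N-4)}$ (this is why $N\ge5$ is needed). The constraint $\alpha>5-\tfrac{6}{N-2}$ is \emph{not} what produces this balance; it is a technical threshold that makes various convolution error terms (for instance $\big\langle K(|x|^{-\alpha}\ast KZ^{2^*_\alpha-1}\phi)\,Z^{2^*_\alpha-1},\,Z_{1,t}\big\rangle$ in the linear analysis) of order $o(\lambda^{-1})$, so that the reduction and the $\lambda$-expansion close. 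Second, your symmetry space is too small: $K(|x'|,x'')$ is not assumed even in the $x''$-variables, so imposing evenness in each $x''$-coordinate is incompatible with the equation; the paper works only with the dihedral symmetry in $x'$.
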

    \begin{Rem}
    	While we construct infinitely many bubble solutions lying near the circle $|x|=r_0$ in $(x_1,x_2)$-plane, the energy of these solutions can be arbitrarily large and the distance between different bubbles can be arbitrarily small. It fails in dimension $N=3,4$ since equation \eqref{eq1} has the same conformal property as prescribed scalar curvature problem on $S^N$. On the other hand, we choose $\lambda \sim m^{\frac{N-2}{N-4}}$ as the scaling parameter which in turn requests $N\geq 5$. So the dimensional condition in this paper is optimal in both theoretical and technical sense.
    \end{Rem}
\noindent 	We will construct infinitely many solutions of \eqref{CFL} by means of a finite dimensional reduction argument and find algebraic equations which determine the location of bubble shaped solutions by establishing new local Poho\v{z}aev type indentities.	It is well known how finite dimensional reduction arguments rely on the non-degeneracy of the solutions of the limiting equation.  
	For the following critical Hartree equation
	\begin{equation}\label{eq1.5}
		-\Delta u=\Big(|x|^{-\alpha}\ast |u|^{2_{\alpha}^{\ast}}\Big)|u|^{2_{\alpha}^{\ast}-2}u
		\ \ \mbox{in}\ \R^N\ ,
	\end{equation}
	by using the moving plane method in integral form developed in \cite{CLO1,CLO2} ,  Lei \cite{Lei}, Du and Yang \cite{DY}, Guo et al. \cite{GHPS} classified independently the positive solutions of \eqref{eq1.5}
	and proved that any positive solution of \eqref{eq1.5} has the form
	\begin{equation}\label{REL}
		U_{z,\lambda}(x)=S^{\frac{(N-\alpha)(2-N)}{4(N-\alpha+2)}}C(N,\alpha)^{\frac{2-N}{2(N-\alpha+2)}}[N(N-2)]^{\frac{N-2}{4}}\Big(\frac{\lambda}{1+\lambda^{2}|x-z |^{2}}\Big)^{\frac{N-2}{2}},\ z\in \R^N, \lambda>0\ .
	\end{equation}
    When $N=6$ and $\alpha=4$, Yang and Zhao\cite{YZ1} proved that the solution of \eqref{eq1.5} is non-degenerate in the sense that the linearization of equation \eqref{eq1.5} around the solution $U_{0,1}$:
	\begin{equation}
		-\Delta \varphi=\frac{1}{2}(I_{2}\ast U_{0,1}^2)\varphi+(I_{2}\ast U_{0,1}\varphi)U_{0,1}
	\end{equation}
	only admits solutions in $D^{1,2}(\R^6)$ of the form
	$$
	\varphi=\overline{a}D_{\lambda}U_{0,1}+\mathbf{b}\cdot\nabla U_{0,1},\ \ \overline{a}\in \R,\ \ \mathbf{b}\in \R^6 .	
	$$
    Recently, Li et al.\cite{LLTX} extended the non-degeneracy result to the general case $N\geq3$ and $0<\alpha<N$.

	\noindent Define
	$$\aligned
	H:=\Big\{&u\in D^{1,2}(\mathbb{R}^N)\ |\ u(x_{1},-x_{2},x'')=u(x_{1},x_{2},x''),\\
	&\hspace{4mm}u(r\cos\theta,r\sin\theta,x'')=u\Big(r\cos(\theta+\frac{2j\pi}{m}),r\sin(\theta+\frac{2j\pi}{m}),x''\Big)\Big\}
	\endaligned$$
	and let
	$$
	z_{j}=\Big(\overline{r}\cos\frac{2(j-1)\pi}{m},\overline{r}\sin\frac{2(j-1)\pi}{m},\overline{x}''\Big), \ j=1,\cdot\cdot\cdot,m,
	$$
	where $\overline{x}''$ is a vector in $\mathbb{R}^{N-2}$. By the weak symmetry of $K(x)$, we have $K (z_j ) = K (\overline{r}, \overline{x}'')$, $j = 1,\cdots, m$.
	In order to construct solutions which concentrate at $(r_0, x_0'')$, we use $U_{z_j,\lambda}$ (see \eqref{REL}) as an approximate solution. Let $\delta> 0$ be a sufficiently small
	constant, such that $K (r, x'') > 0$ if $|(r, x'')-(r_0, x_0'' )|\leq 10\delta$. Let $\xi(x) = \xi(|x'|, x'')$ be a
	smooth function satisfying $\xi= 1$ if $|(r, x'')-(r_0, x_0'' )|\leq\delta$, $\xi= 0$ if $|(r, x'')-(r_0, x_0'' )| \geq2\delta$,
	and $0\leq \xi\leq 1$. Let us use the following notation 
	$$
	Z_{z_j,\lambda}(x)=\xi U_{z_j,\lambda}(x), \
	Z_{\overline{r},\overline{x}'',\lambda}(x)=\sum_{j=1}^{m}Z_{z_j,\lambda}(x), \
	Z_{\overline{r},\overline{x}'',\lambda}^{\ast}(x)=\sum_{j=1}^{m}U_{z_j,\lambda}(x),
	$$
	and
	$$
	Z_{j,1}=\frac{\partial Z_{z_j,\lambda}}{\partial \lambda}, Z_{j,2}=\frac{\partial Z_{z_j,\lambda}}{\partial \overline{r}},
	Z_{j,k}=\frac{\partial Z_{z_j,\lambda}}{\partial \overline{x}_{k}''},
	\ \mbox{for }k=3,\cdot\cdot\cdot,N, \ j=1,\cdot\cdot\cdot,m.
	$$

	\noindent In this paper, we will always assume that $m > 0$ is a large integer, $\lambda\in[L_{0}m^{\frac{N-2}{N-4}},L_{1}m^{\frac{N-2}{N-4}}]$
	for some constants $L_1 > L_0 > 0$ and
	\begin{equation}\label{uxi}
		|(\overline{r}, \overline{x}'')-(r_0, x_0'')|\leq \frac{1}{\lambda^{1-\theta}} ,
	\end{equation}
	where $\theta> 0$ is a sufficiently small constant.

\noindent 	In order to prove Theorem \ref{EXS}, we will prove the following 
	\begin{lem}\label{EXS1}
		Under the assumptions of Theorem \ref{EXS}, there exists a positive integer $m_0>0$, such that for any integer $m\geq m_0$, \eqref{CFL} has a solution $u_m$ of the form
		\begin{equation}\label{form}
			u_m=Z_{\overline{r}_{m},\overline{x}_{m}'',\lambda_{m}}+
			\phi_{\overline{r}_{m},\overline{x}_{m}'',\lambda_{m}}=\sum_{j=1}^{m}\xi U_{z_j,\lambda_{m}}+\phi_{\overline{r}_{m},\overline{x}_{m}'',\lambda_{m}},
		\end{equation}
		where $\phi_{\overline{r}_{m},\overline{x}_{m}'',\lambda_{m}}\in H$ and $\lambda_{m}\in\big[L_0m^{\frac{N-2}{N-4}},L_1m^{\frac{N-2}{N-4}}\big]$. Moreover, as $m\rightarrow\infty$, $(\overline{r}_m, \overline{x}_m'')\rightarrow(r_0, x_0'')$, and $\lambda_{m}^{-\frac{N-2}{2}}\|\phi_{\overline{r}_{m},\overline{x}_{m}'',\lambda_{m}}\|_{L^{\infty}}\rightarrow0$.
	\end{lem}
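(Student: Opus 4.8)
The plan is to set up a finite dimensional reduction (Lyapunov–Schmidt procedure) in the symmetric space $H$, using the bubbles $Z_{\overline r,\overline x'',\lambda}$ as approximate solutions, and then to solve the resulting finite dimensional problem for the parameters $(\overline r,\overline x'',\lambda)$ by a local Pohoz\v aev argument. Concretely, write the solution as $u = Z_{\overline r,\overline x'',\lambda} + \phi$ with $\phi$ in the orthogonal complement (in the $D^{1,2}$-inner product) of the span $E_{\overline r,\overline x'',\lambda}=\operatorname{span}\{Z_{j,k} : 1\le j\le m,\ 1\le k\le N\}$. Plugging this ansatz into \eqref{CFL} and projecting onto $E^\perp$, one obtains a nonlinear equation $L_m\phi = N_m(\phi) + R_m$ on $E^\perp$, where $L_m$ is the linearized operator at the approximate solution projected onto $E^\perp$, $R_m = \Delta Z_{\overline r,\overline x'',\lambda} + K(\,|x|^{-\alpha}\ast K Z^{2^*_\alpha})Z^{2^*_\alpha-1}$ is the error, and $N_m(\phi)$ collects the higher order terms in $\phi$. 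The first task is an invertibility estimate: $\|L_m\phi\|_{**}\ge c\|\phi\|_{*}$ uniformly in $m$ for $\phi\in E^\perp$, in suitably weighted norms $\|\cdot\|_*,\|\cdot\|_{**}$ adapted to the sum of $m$ bubbles (weights of the type $\sum_j (1+\lambda|x-z_j|)^{-\tau}$). This rests on the non-degeneracy of the limiting bubble $U_{0,1}$ — available from \cite{YZ1,LLTX} in the full range $N\ge 5$, $0<\alpha<N$ — together with the usual blow-up/contradiction argument, taking care that the mutual interaction of distinct bubbles is negligible because they are separated by distance $\sim \overline r/m$ while $\lambda\sim m^{(N-2)/(N-4)}$, so $\lambda|z_i-z_j|\to\infty$.

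The second task is to estimate the error term $R_m$ in the $\|\cdot\|_{**}$ norm and to verify that $N_m$ is a contraction on a small ball. Here the cut-off $\xi$ contributes an exponentially small (in the region $|x|\sim$ const away from the concentration circle) piece, and the main contribution comes from the mismatch between $K(x)$ and $K(r_0,x_0'')=1$ near each $z_j$ together with the nonlocal convolution acting on the sum of bubbles. Using $|x|^{-\alpha}\ast(\,\cdot\,)$ and the HLS/Hardy–Littlewood–Sobolev bounds of Proposition \ref{pro1.1}, one shows $\|R_m\|_{**}\le C(\lambda^{-1}+ |(\overline r,\overline x'')-(r_0,x_0'')| + (m/\lambda)^{(N-2)/2-\sigma})$ for some small $\sigma>0$; the condition $\alpha > 5-\frac6{N-2}$ is exactly what makes the cross terms in the convolution summable and the reduced energy expansion non-degenerate. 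A standard contraction-mapping/implicit-function argument then yields, for each admissible $(\overline r,\overline x'',\lambda)$ in the range \eqref{uxi} with $\lambda\in[L_0 m^{(N-2)/(N-4)},L_1 m^{(N-2)/(N-4)}]$, a unique small $\phi=\phi_{\overline r,\overline x'',\lambda}\in E^\perp\cap H$ solving the projected equation, depending smoothly (or at least $C^1$) on the parameters, with $\|\phi\|_*\to 0$; from $\|\phi\|_*$ control one deduces $\lambda^{-(N-2)/2}\|\phi\|_{L^\infty}\to 0$.

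The third and decisive task — which I expect to be the main obstacle — is to solve the finite dimensional (bifurcation) system, i.e.\ to choose $(\overline r_m,\overline x''_m,\lambda_m)$ so that the Lagrange multipliers all vanish and $u_m=Z+\phi$ is a genuine solution. Rather than differentiating the reduced energy directly (which is delicate because of the nonlocal term and the cut-off), the strategy following \cite{GMYZ,PWW} is to use \emph{local Pohoz\v aev identities}: multiply the equation by $\langle x-z_1, \nabla u\rangle$ and by $\partial_{x_k} u$, integrate over a fixed ball $B_\rho(z_1)$ around one bubble, and expand each term using the ansatz. This produces a system of $N$ equations in the $N$ unknowns (after exploiting the $\mathbb Z_m$-symmetry, one bubble's worth of identities suffices). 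The leading order of these identities is governed by $\nabla K(r_0,x_0'')$ and $\Delta K(r_0,x_0'')$: assumption \textbf{(K1)} ($\nabla K(r_0,x_0'')=0$ with nonzero local degree) and \textbf{(K2)} ($\Delta K(r_0,x_0'')<0$) guarantee that the reduced map has a nonzero Brouwer degree on a small neighborhood of $(r_0,x_0'')$ in the $(\overline r,\overline x'')$ variables and that the $\lambda$-equation can be balanced — the $\lambda$-derivative of the energy involves $\Delta K$ with the scaling $\lambda\sim m^{(N-2)/(N-4)}$ making the curvature term compete with the bubble-interaction term. By a degree-theoretic / continuity argument one concludes the existence of a zero $(\overline r_m,\overline x''_m,\lambda_m)$ with the asserted asymptotics $(\overline r_m,\overline x''_m)\to(r_0,x_0'')$ and $\lambda_m\in[L_0m^{(N-2)/(N-4)},L_1m^{(N-2)/(N-4)}]$, completing the proof. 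The technical heart is carrying out the expansions of the nonlocal Pohoz\v aev terms with enough precision to isolate the $\nabla K$ and $\Delta K$ contributions, while controlling all the convolution cross-terms by the constraint $\alpha>5-\frac{6}{N-2}$.
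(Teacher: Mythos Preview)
Your proposal is correct and follows essentially the same three-step strategy as the paper: weighted-norm Lyapunov--Schmidt reduction relying on the non-degeneracy of $U_{0,1}$, contraction to obtain $\phi_{\overline r,\overline x'',\lambda}$ with $\|\phi\|_*\le C\lambda^{-(1+\varepsilon)}$, and then local Poho\v{z}aev identities plus a degree argument to solve for the parameters. Two minor implementation differences are worth noting: the paper integrates the Poho\v{z}aev identities over the full annular set $D_\rho=\{|(r,x'')-(r_0,x_0'')|\le\rho\}$ containing \emph{all} bubbles (not over $B_\rho(z_1)$), using $\langle x,\nabla u_m\rangle$ and $\partial_{x_i}u_m$ for $i=3,\dots,N$; and the $\lambda$-equation is obtained not from a Poho\v{z}aev identity but by testing the full equation directly against $\partial Z_{\overline r,\overline x'',\lambda}/\partial\lambda$ over $\mathbb R^N$, which after the energy expansion yields the balance $-A_1\lambda^{-3}+A_3 m^{N-2}\lambda^{-(N-1)}=o(\lambda^{-3})$ between the $\Delta K$ term and the bubble-interaction term.
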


\medskip
	
\subsection*{Overview} Let us briefly outline main ideas. The energy functional corresponding to \eqref{CFL} is given by 
	$$
	J(u)=\frac{1}{2}\int_{\mathbb{R}^{N}}|\nabla u|^{2}dx
	-\frac{1}{2\cdot2^{\ast}_{\alpha}}
	\int_{\mathbb{R}^N}\int_{\mathbb{R}^N}
	\frac{K(r,x'')K(r,y'')|u(x)|^{2^{\ast}_{\alpha}}|u(y)|^{2^{\ast}_{\alpha}}}{|x-y|^{\alpha}}dxdy.
	$$
	After performing the finite-dimensional reduction argument, the problem to find critical points of $J(u)$ of the form \eqref{form} turns into finding critical points for
	$$
	F(\overline{r},\overline{x}'',\lambda):=J(Z_{\overline{r},\overline{x}'',\lambda}+
	\phi_{\overline{r},\overline{x}'',\lambda})
	$$
	where $\lambda\in[L_{0}m^{\frac{N-2}{N-4}},L_{1}m^{\frac{N-2}{N-4}}]$ for some constants $L_1 > L_0 > 0$ and $(\overline{r},\overline{x}'')$ satisfies \eqref{uxi}. In order to determine the location of bubbles, we can not differrentiate directly the reduced function $F(\overline{r},\overline{x}'',\lambda)$ with respect to $\overline{r}$, $\overline{x}''$ as it yields further issues. By means of local Poho\v{z}aev identities introduced in \cite{PWY}, we prove that if $(\overline{r},\overline{x}'')$ satisfies in a suitable neighborhood $D_{\rho}$ of $(\overline{r}_{0},\overline{x}''_{0})$ the following 
	\begin{equation}
		\int_{D_{\rho}}\Big(-\Delta u
		-K(r,x'')\Big(|x|^{-\alpha}\ast (K(r,x'')|u|^{2^{\ast}_{\alpha}})\Big)u^{2^{\ast}_{\alpha}-1}\Big)\langle x,\nabla u\rangle dx=0,
	\end{equation}
	and
	\begin{equation}
		\int_{D_{\rho}}\Big(-\Delta u
		-K(r,x'')\Big(|x|^{-\alpha}\ast (K(r,x'')|u|^{2^{\ast}_{\alpha}})\Big)u^{2^{\ast}_{\alpha}-1}\Big)\frac{\partial u}{\partial x_{i}} dx=0, \ i=3,\cdots,N
	\end{equation}
	where $u=Z_{\overline{r},\overline{x_{i}}'',\lambda}+\phi_{\overline{r},\overline{x}'',\lambda}$
	is the function obtained by reduction argument, then 
	$$\frac{\partial F}{\partial \overline{r}}=0\quad \text{ and } \quad \frac{\partial F}{\partial \overline x_{i}''}=0\ .$$
	\noindent By using this method, we mainly need the estimate of error term away from the concentration point.

	\noindent This paper is organized as follows. In Section 2, we will perform the finite dimensional reduction argument for \eqref{CFL}. Then, we will establish energy estimates in Section 3. In Section 4, we will study the reduced problem and prove Lemma \ref{EXS1}. Some basic estimates are collected in the Appendix.

	\section{Finite-dimensional reduction}
\noindent	In this section, we carry out the finite-dimensional reduction in a suitable weighted space introduced in \cite{WY1} and we establish a fine estimate for the error term. Let
	$$
	\|u\|_{\ast}=\sup_{x\in\mathbb{R}^N}\Big(\sum_{j=1}^{m}
	\frac{1}{(1+\lambda|x-z_{j}|)^{\frac{N-2}{2}+\tau}}\Big)^{-1}\lambda^{-\frac{N-2}{2}}|u(x)|,
	$$
	and
	$$
	\|h\|_{\ast\ast}=\sup_{x\in\mathbb{R}^N}\Big(\sum_{j=1}^{m}
	\frac{1}{(1+\lambda|x-z_{j}|)^{\frac{N+2}{2}+\tau}}\Big)^{-1}\lambda^{-\frac{N+2}{2}}|h(x)|,
	$$
	where $\tau=\frac{N-4}{N-2}$.
	
	Consider
	\begin{equation}\label{c1}
		\left\{\begin{array}{l}
			\displaystyle -\Delta \phi
			-(2^{\ast}_{\alpha}-1)K(x)\Big(|x|^{-\alpha}\ast (K(x)|Z_{\overline{r},\overline{x}'',\lambda}|^{2^{\ast}_{\alpha}})\Big)Z_{\overline{r},\overline{x}'',\lambda}^{2^{\ast}_{\alpha}-2}\phi-2^{\ast}_{\alpha}K(x)\Big(|x|^{-\alpha}\ast (K(x)Z_{\overline{r},\overline{x}'',\lambda}^{2^{\ast}_{\alpha}-1}
			\phi)\Big)Z_{\overline{r},\overline{x}'',\lambda}^{2^{\ast}_{\alpha}-1}
			\\
			\displaystyle \hspace{10.14mm}=h+
			\sum_{l=1}^{N}c_{l}\sum_{j=1}^{m}[(2^{\ast}_{\alpha}-1)\Big(|x|^{-\alpha}\ast |Z_{z_j,\lambda}|^{2^{\ast}_{\alpha}}\Big)Z_{z_j,\lambda}^{2^{\ast}_{\alpha}-2}Z_{j,l}+2^{\ast}_{\alpha}\Big(|x|^{-\alpha}\ast (Z_{z_j,\lambda}^{2^{\ast}_{\alpha}-1}
			Z_{j,l})\Big)Z_{z_j,\lambda}^{2^{\ast}_{\alpha}-1}]\hspace{4.14mm}\mbox{in}\hspace{1.14mm} \mathbb{R}^N,\\
			\displaystyle \phi\in H, \ \ \sum_{j=1}^{m}\int_{\mathbb{R}^{N}}\Big[(2^{\ast}_{\alpha}-1)\Big(|x|^{-\alpha}\ast |Z_{z_j,\lambda}|^{2^{\ast}_{\alpha}}\Big)Z_{z_j,\lambda}^{2^{\ast}_{\alpha}-2}Z_{j,l}\phi +2^{\ast}_{\alpha}\Big(|x|^{-\alpha}\ast |Z_{z_j,\lambda}^{2^{\ast}_{\alpha}-1}Z_{j,l}|\Big)Z_{z_j,\lambda}^{2^{\ast}_{\alpha}-1}\phi\Big] dx=0,
			\\
			\displaystyle \hspace{10.14mm}~~l=1,2,\cdots,N,
		\end{array}
		\right.
	\end{equation}
	for some real numbers $c_{l}$.

	\begin{lem}\label{C1}
		Suppose that $\phi_{m}$ solves \eqref{c1} for $h = h_m$. If $\|h_{m}\|_{\ast\ast}\to 0$ as $m\to \infty$, then $\|\phi_{m}\|_{\ast}\to 0$.
	\end{lem}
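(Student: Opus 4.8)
The plan is to argue by contradiction via the standard blow-up/concentration argument for linearized operators in the weighted $\|\cdot\|_*$-space. Suppose $\|h_m\|_{**}\to 0$ but, after normalizing, $\|\phi_m\|_*=1$ for all $m$. First I would derive the pointwise estimate for $\phi_m$ by treating the linear equation \eqref{c1} as $-\Delta\phi_m = g_m$, where $g_m$ collects the two nonlocal linear terms, the datum $h_m$, and the finite-rank correction $\sum_l c_l\sum_j(\cdots)$. Using the representation $\phi_m(x)=\int_{\mathbb{R}^N} \frac{c_N}{|x-y|^{N-2}}\, g_m(y)\,dy$ together with the Hardy--Littlewood--Sobolev inequality (Proposition \ref{pro1.1}) to control the convolution pieces, and the boundedness of $K$, one shows
\[
\|\phi_m\|_* \le C\Big(\|h_m\|_{**} + \sum_{l=1}^N |c_l|\,\|\,\cdots\,\|_{**} + o(1)\|\phi_m\|_*\Big).
\]
The $o(1)\|\phi_m\|_*$ term comes from the fact that the coefficient $(2^*_\alpha-1)K(|x|^{-\alpha}*(K Z^{2^*_\alpha}))Z^{2^*_\alpha-2}$ is concentrated near the $z_j$'s and, when convolved against the Green kernel and measured in the $*$-norm, produces a small multiple of $\|\phi_m\|_*$ away from the bubbles and is absorbed near the bubbles by the non-degeneracy of the single bubble. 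This is the technical heart and I expect it to require the elementary convolution estimates of the type $\int \frac{dy}{|x-y|^{N-2}(1+\lambda|y-z_j|)^{s}}$ collected in the Appendix, combined with the cross-term estimates $\sum_{i\ne j}(1+\lambda|z_i-z_j|)^{-\sigma}$ being small when $m$ is large (this is where $\lambda\sim m^{(N-2)/(N-4)}$ and the separation of the $z_j$ matter).

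Second I would estimate the Lagrange multipliers $c_l$. Testing \eqref{c1} against $Z_{j,l}$ and summing over $j$, the left-hand side is $O(\|\phi_m\|_*)$-small relative to a fixed coefficient matrix while the right-hand side, by the orthogonality condition in \eqref{c1} and the asymptotic near-diagonal structure of $\langle Z_{i,l}, Z_{j,l'}\rangle$ (which after rescaling converges to a nondegenerate diagonal block coming from the $D^{1,2}(\mathbb{R}^N)$ pairings of $D_\lambda U$ and $\partial_k U$), forces
\[
|c_l| \le C\big(\|h_m\|_{**} + o(1)\|\phi_m\|_*\big), \qquad l=1,\dots,N.
\]
Plugging this back into the first inequality and using $\|h_m\|_{**}\to 0$ gives $1=\|\phi_m\|_* \le o(1)$, a contradiction — but only if we can first rule out the possibility that all the mass of $\phi_m$ escapes into the zero modes; that is handled precisely by the orthogonality constraint.

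Third, to make the $o(1)\|\phi_m\|_*$ absorption rigorous where the cut-off weight fails to decay, I would run the usual dichotomy: let $\tilde\phi_j(y)=\lambda^{-(N-2)/2}\phi_m(z_j+y/\lambda)$ be the rescaled pieces. Along a subsequence $\tilde\phi_j \rightharpoonup \phi_\infty$ in $D^{1,2}(\mathbb{R}^N)$, and passing to the limit in the rescaled equation shows $\phi_\infty$ solves the linearized Choquard equation \eqref{eq1.5} around $U_{0,1}$; by the non-degeneracy result of Li--Li--Tang--Xu (cited in the excerpt as \cite{LLTX}, extending Yang--Zhao \cite{YZ1}) and the passage to the limit of the orthogonality conditions, $\phi_\infty=0$. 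Hence the local $L^\infty$-mass near each $z_j$ at scale $\lambda^{-1}$ is $o(1)$, and combined with the decay estimate away from the bubbles (which is quantitative from step one) we conclude $\|\phi_m\|_*=o(1)$.

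The main obstacle, in my judgment, is step one: getting the clean inequality $\|\phi_m\|_* \le C\|h_m\|_{**}+o(1)\|\phi_m\|_*+C\sum|c_l|(\cdots)$ with the correct exponent $\tau=\frac{N-4}{N-2}$ in the weights. The choice of $\tau$ is dictated exactly by the requirement that the potential term $(|x|^{-\alpha}*(KZ^{2^*_\alpha}))Z^{2^*_\alpha-2}$, which behaves like $\lambda^2(1+\lambda|x-z_j|)^{-4}$ near a bubble (up to the nonlocal convolution tails controlled via HLS and the condition $\alpha>5-\frac{6}{N-2}$), when hit by the Newtonian kernel maps the weight $\sum_j(1+\lambda|x-z_j|)^{-(N-2)/2-\tau}$ back into (a small multiple of) itself plus the cross terms; verifying this requires the delicate convolution lemmas deferred to the Appendix and careful bookkeeping of the nonlocal tails, which is precisely why the restriction on $\alpha$ appears.
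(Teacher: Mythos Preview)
Your proposal is correct and follows essentially the same approach as the paper: contradiction argument with normalization $\|\phi_m\|_*=1$, Green's function representation giving a pointwise estimate with an \emph{improved} weight (extra decay $\theta>0$) on the potential terms, estimation of the multipliers $c_l$ by testing against $Z_{1,t}$, and then the blow-up/non-degeneracy step to rule out concentration near the bubbles. One small point of presentation: the displayed inequality in your step one, $\|\phi_m\|_*\le C(\|h_m\|_{**}+\sum_l|c_l|\|\cdots\|_{**}+o(1)\|\phi_m\|_*)$, is not literally true as a global $*$-norm bound---the coefficient in the potential term is of order one near each $z_j$, so the ratio of weights is $1$ there; what the paper (and your step three) actually obtain is the pointwise estimate with the extra-decay weight ratio $\sum_j(1+\lambda|x-z_j|)^{-\frac{N-2}{2}-\tau-\theta}/\sum_j(1+\lambda|x-z_j|)^{-\frac{N-2}{2}-\tau}$, which is small only away from the bubbles and forces the mass to concentrate, whence the blow-up step and non-degeneracy give the contradiction. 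Your step three is the correct formulation, so the logic is sound once you state the first inequality in that pointwise form rather than as a global $o(1)\|\phi_m\|_*$.
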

	\begin{proof}
		We argue by contradiction. Suppose that there exist $m\rightarrow+\infty$, $\overline{r}_m\rightarrow r_{0}$, $\overline{y}_{m}''\rightarrow y_0''$, $\lambda_{m}\in[L_{0}m^{\frac{N-2}{N-4}},L_{1}m^{\frac{N-2}{N-4}}]$ and $\phi_{m}$ solving \eqref{c1} for $h=h_{m}$, $\lambda=\lambda_{m}$, $\overline{r}=\overline{r}_{m}$, $\overline{y}''=\overline{y}_{m}''$,
		with $\|h_{m}\|_{\ast\ast}\rightarrow0$ and $\|\phi_{m}\|_{\ast}\geq c>0$. We may assume that $\|\phi_{m}\|_{\ast}=1$.
		
\noindent 		By \eqref{c1}, we have
		\begin{equation}\label{c2}
			\aligned
			|\phi_m(x)|&\leq
			C\int_{\mathbb{R}^N}\frac{K(|y)}{|y-x|^{N-2}}\Big(|y|^{-\alpha}\ast (K(y)Z_{\overline{r},\overline{y}'',\lambda}^{2^{\ast}_{\alpha}-1}
			|\phi_m|)\Big)Z_{\overline{r},\overline{x}'',\lambda}^{2^{\ast}_{\alpha}-1}(y)dy\\
			&+C	\int_{\mathbb{R}^N}\frac{K(y)}{|y-x|^{N-2}}\Big(|y|^{-\alpha}\ast (K(y)|Z_{\overline{r},\overline{x}'',\lambda}|^{2^{\ast}_{\alpha}})\Big)Z_{\overline{r},\overline{x}'',\lambda}^{2^{\ast}_{\alpha}-2}|\phi_m(y)| dy
			\\
			&+C\sum_{l=1}^{N}|c_{l}|\Bigg[\Big|\sum_{j=1}^{m}\int_{\mathbb{R}^N}\frac{1}{|y-x|^{N-2}}\Big(|y|^{-\alpha}\ast (Z_{z_j,\lambda}^{2^{\ast}_{\alpha}-1}
			Z_{j,l})\Big)Z_{z_j,\lambda}^{2^{\ast}_{\alpha}-1}(y)dy\Big|\\
			&+\Big|\sum_{j=1}^{m}\int_{\mathbb{R}^N}\frac{1}{|y-x|^{N-2}}\Big(|y|^{-\alpha}\ast |Z_{z_j,\lambda}|^{2^{\ast}_{\alpha}}\Big)Z_{z_j,\lambda}^{2^{\ast}_{\alpha}-2}Z_{j,l}(y)dy\Big|\Bigg]\\
			&+C\int_{\mathbb{R}^N}\frac{1}{|y-x|^{N-2}}|h_m(y)|dy.
			\endaligned
		\end{equation}

\noindent 				Define
		$$
		\Omega_{j}=\left\{x=(x',x'')\in\mathbb{R}^{2}\times\mathbb{R}^{N-2}:\Big\langle\frac{x'}{|x'|},
		\frac{z_{j}'}{|z_{j}'|}\Big\rangle\geq\cos\frac{\pi}{m}\right\}, \ j=1,\cdot\cdot\cdot,m.
		$$
		For $y\in\Omega_{1}$, we have $|y- z_{j}|\geq|y- z_{1}|$. According to Lemma \ref{B2}, we have
		\begin{equation}\label{5141}		
			\aligned
			\sum_{j=2}^{m}\frac{1}{(1+\lambda|y- z_{j}|)^{N-2}}&\leq \frac{1}{(1+\lambda|y- z_{1}|)^{\frac{N-2}{2}}}\sum_{j=2}^{m}\frac{1}{(1+\lambda|y- z_{j}|)^{\frac{N-2}{2}}}\\
			&\leq\frac{C}{(1+\lambda|y- z_{1}|)^{N-2-\tau_1}}\sum_{j=2}^{m}\frac{1}{(\lambda|z_{1}-z_{j}|)^{\tau_1}}\\
			&\leq\frac{C}{(1+\lambda|y- z_{1}|)^{N-2-\tau_1}},
			\endaligned
		\end{equation}
		where we used the result from \cite{WY1} 
		$$
		\sum_{j=2}^{m}\frac{1}{(\lambda|z_{1}-z_{j}|)^{\tau_1}}\leq C.
		$$
	   By Lemma \ref{B4} and the fact that
		$$
		Z_{\overline{r},\overline{y}'',\lambda}\leq C\sum_{j=1}^{m}\frac{\lambda^{\frac{N-2}{2}}}{(1+\lambda|x-z_{j}|)^{N-2}},
		$$
    we have
		$$
		Z_{\overline{r},\overline{x}'',\lambda}^{2^{\ast}_{\alpha}-1}\leq C\frac{\lambda^{\frac{N+2-\alpha}{2}}}{(1+\lambda|y-z_{1}|)^{N+2-\alpha-\tau_1\frac{N+2-\alpha}{N-2}}}.
		$$
		Then using again Lemma \ref{B4}, we get
		\begin{equation}\label{51411}
			\aligned
			&|y|^{-\alpha}\ast (K(y)Z_{\overline{r},\overline{y}'',\lambda}^{2^{\ast}_{\alpha}-1}
			|\phi_m|)\\
			&\leq C\|\phi_m\|_{\ast}\sum_{j=1}^{m}\int_{\Omega_{j}}\frac{1}{|x-y|^{\alpha}}\Big(\frac{\lambda^{\frac{N+2-\alpha}{2}}}{(1+\lambda|y-z_{j}|)^{N+2-\alpha-\tau_1\frac{N+2-\alpha}{N-2}}}\sum_{i=1}^{m}
			\frac{\lambda^{\frac{N-2}{2}}}{(1+\lambda|y-z_{i}|)^{\frac{N-2}{2}+\tau}}\Big)dy \\
			&\leq C\|\phi_m\|_{\ast}\sum_{j=1}^{m}\int_{\Omega_{j}}\frac{1}{|x-y|^{\alpha}}\frac{\lambda^{N-\frac{\alpha}{2}}}{(1+\lambda|y-z_{j}|)^{\frac{3N+2}{2}-\alpha-\tau_1\frac{2N-\alpha}{N-2}+\tau}}\\
			&\leq C\|\phi_m\|_{\ast}\sum_{j=1}^{m}\frac{\lambda^{\frac{\alpha}{2}}}{(1+\lambda|y-z_{j}|)^{\min\{\alpha, \frac{N+2}{2}\}}},
			\endaligned
		\end{equation}
		and in $\Omega_{j}$, we have
		$$\aligned
		&\frac{\lambda^{\frac{N+2-\alpha}{2}}}{(1+\lambda|y-z_{j}|)^{N+2-\alpha-\tau_1\frac{N+2-\alpha}{N-2}}}\sum_{i=1}^{m}
		\frac{\lambda^{\frac{N-2}{2}}}{(1+\lambda|y-z_{i}|)^{\frac{N-2}{2}+\tau}}\\
		&\leq \frac{\lambda^{N-\frac{\alpha}{2}}}{(1+\lambda|y-z_{j}|)^{\frac{3N+2}{2}-\alpha-\tau_1\frac{N+2-\alpha}{N-2}+\tau}}
		+\sum_{i=2}^{m}\frac{C}{(\lambda|z_{i}-z_{j}|)^{\tau_1}}\frac{\lambda^{N-\frac{\alpha}{2}}}{(1+\lambda|y-z_{j}|)^{\frac{3N+2}{2}-\alpha-\tau_1\frac{N+2-\alpha}{N-2}-\tau_1+\tau}}\\
		&\leq\frac{C\lambda^{N-\frac{\alpha}{2}}}{(1+\lambda|y-z_{j}|)^{\frac{3N+2}{2}-\alpha-\tau_1\frac{2N-\alpha}{N-2}+\tau}}.
		\endaligned$$
		Here $\tau_1$ can be chosen small enough such that $\eta=-\tau_1\frac{2N-\alpha}{N-2}+\tau>0$. When $\min\{\alpha, \frac{N+2}{2}\}=\alpha$, we can obtain the estimate of first term in the right hand side of  \eqref{c2} as follows
		\begin{equation}\label{c02}
			\aligned
			&\int_{\mathbb{R}^N}\frac{K(y)}{|y-x|^{N-2}} \Big(|y|^{-\alpha}\ast (K(y)Z_{\overline{r},\overline{y}'',\lambda}^{2^{\ast}_{\alpha}-1}
			|\phi_m|)\Big)Z_{\overline{r},\overline{x}'',\lambda}^{2^{\ast}_{\alpha}-1}(y)dy\\
			&\leq C\|\phi_m\|_{\ast}\lambda^{\frac{N-2}{2}}\int_{\mathbb{R}^N}\frac{1}{|y-\lambda x|^{N-2}}
			\sum_{j=1}^{m}\frac{1}{(1+|y-\lambda z_{j}|)^{\alpha}}
			Z_{\overline{r},\overline{x}'',\lambda}^{2^{\ast}_{\alpha}-1}(y) dy\\
			&\leq C\|\phi_m\|_{\ast}\lambda^{\frac{N-2}{2}}\sum_{j=1}^{m}\int_{\Omega_{j}}\frac{1}{|y-\lambda x|^{N-2}}
			\sum_{j=1}^{m}\frac{1}{(1+|y-\lambda z_{j}|)^{\alpha}}
			\frac{1}{(1+|y-\lambda z_{j}|)^{N+2-\alpha-\tau_1\frac{N+2-\alpha}{N-2}}} dy\\
			&\leq C\|\phi_m\|_{\ast}\lambda^{\frac{N-2}{2}}\sum_{j=1}^{m}\int_{\Omega_{j}}\frac{1}{|y-\lambda x|^{N-2}}
			\frac{1}{(1+|y-\lambda z_{j}|)^{N+2-\tau_1\frac{2N-\alpha}{N-2}}} dy
			\\
			&\leq C\|\phi_m\|_{\ast}\lambda^{\frac{N-2}{2}}\sum_{j=1}^{m}
			\frac{1}{(1+\lambda|y- z_{j}|)^{\frac{N-2}{2}+\tilde{\tau}+\theta}} dy.
			\endaligned
		\end{equation}
		By setting $\tilde{\tau}=\tau_1\frac{2N-\alpha}{N-2}$ and  $\theta=\frac{N+2}{2}-2\tilde{\tau}>0$, the last inequality is a consequence of Lemma \ref{B3}. Similarly, we have
		$$\aligned
		&\int_{\mathbb{R}^N}\frac{K(y)}{|y-x|^{N-2}} \Big(|y|^{-\alpha}\ast (K(y)Z_{\overline{r},\overline{y}'',\lambda}^{2^{\ast}_{\alpha}-1}
		|\phi_m|)\Big)Z_{\overline{r},\overline{x}'',\lambda}^{2^{\ast}_{\alpha}-1}(y)dy\\
		&\leq C\|\phi_m\|_{\ast}\lambda^{\frac{N-2}{2}}\int_{\mathbb{R}^N}\frac{1}{|y-\lambda x|^{N-2}}
		\sum_{j=1}^{m}\frac{1}{(1+|y-\lambda z_{j}|)^{\frac{N+2}{2}}}
		Z_{\overline{r},\overline{x}'',\lambda}^{2^{\ast}_{\alpha}-1}(y) dy\\
		&\leq C\|\phi_m\|_{\ast}\lambda^{\frac{N-2}{2}}\sum_{j=1}^{m}\int_{\Omega_{j}}\frac{1}{|y-\lambda x|^{N-2}}
		\sum_{j=1}^{m}\frac{1}{(1+|y-\lambda z_{j}|)^{\frac{N+2}{2}}}
		\frac{1}{(1+|y-\lambda z_{j}|)^{N+2-\alpha-\tau_1\frac{N+2-\alpha}{N-2}}} dy\\
		&\leq C\|\phi_m\|_{\ast}\lambda^{\frac{N-2}{2}}\sum_{j=1}^{m}\int_{\Omega_{j}}\frac{1}{|y-\lambda x|^{N-2}}
		\frac{1}{(1+|y-\lambda z_{j}|)^{\frac{3(N+2)}{2}-\tau_1\frac{2N-\alpha}{N-2}}} dy
		\\
		&\leq C\|\phi_m\|_{\ast}\lambda^{\frac{N-2}{2}}\sum_{j=1}^{m}
		\frac{1}{(1+\lambda|y- z_{j}|)^{\frac{N-2}{2}+\tilde{\tau}+\theta}} dy\ ,
		\endaligned
		$$
		where we denote by $\theta=N+2-2\tilde{\tau}>0$. For the second term in the right hand side of  \eqref{c2}, in a similar fashion we obtain
		\begin{equation}\label{5143}
			\int_{\mathbb{R}^N}\frac{K(y)}{|y-x|^{N-2}}\Big(|y|^{-\alpha}\ast (K(y)|Z_{\overline{r},\overline{x}'',\lambda}|^{2^{\ast}_{\alpha}})\Big)Z_{\overline{r},\overline{x}'',\lambda}^{2^{\ast}_{\alpha}-2}|\phi_m(y)| dy\leq C \|\phi_m\|_{\ast}\lambda^{2}\sum_{j=1}^{m}
			\frac{1}{(1+\lambda|x-z_{j}|)^{2+\tilde{\tau}+\theta}}.
		\end{equation}
		
		For the third term, by using Lemma \ref{B3} and \ref{B4}, we have
		\begin{equation}\label{5144}
			\aligned &\left|\sum_{j=1}^{m}\int_{\mathbb{R}^N}\frac{1}{|y-x|^{N-2}}\Big(|y|^{-\alpha}\ast (Z_{z_j,\lambda}^{2^{\ast}_{\alpha}-1}
			Z_{j,l})\Big)Z_{z_j,\lambda}^{2^{\ast}_{\alpha}-1}(y)dy\right|\\
			&\leq C \lambda^{n_{l}}\sum_{j=1}^{m}\int_{\mathbb{R}^N}\frac{1}{|y-x|^{N-2}}\Big(|y|^{-\alpha}\ast {W^{2^{\ast}_{\alpha}}_{z_j,\lambda}})\Big)W^{2^{\ast}_{\alpha}-1}_{z_j,\lambda}(y)dy\\
			&\leq C \lambda^{n_{l}}\sum_{j=1}^{m}\int_{\mathbb{R}^N}\frac{1}{|y-x|^{N-2}}\frac{\lambda^\frac{\alpha}{2}}{(1+\lambda|y-z_{j}|)^{\alpha}}	\frac{\lambda^{\frac{N+2-\alpha}{2}}}{(1+\lambda|y-z_{j}|)^{N+2-\alpha}}dy\\
			&\leq C \lambda^{n_{l}}\sum_{j=1}^{m}\int_{\mathbb{R}^N}\frac{1}{|y-x|^{N-2}}			\frac{\lambda^{\frac{N+2}{2}}}{(1+\lambda|y-z_{j}|)^{N+2}}dy\\
			&\leq C \lambda^{\frac{N-2}{2}+n_{l}}\sum_{j=1}^{m}			\frac{1}{(1+\lambda|x-z_{j}|)^{\frac{N-2}{2}+\tau}},
			\endaligned
		\end{equation}
		where $n_{1}=-1$, $n_{l}=1,l=2,\cdot\cdot\cdot,N$. Similarly we have
		\begin{equation}\label{5145}
			\left|\sum_{j=1}^{m}\int_{\mathbb{R}^N}\frac{1}{|y-x|^{N-2}}\Big(|y|^{-\alpha}\ast |Z_{z_j,\lambda}|^{2^{\ast}_{\alpha}}\Big)Z_{z_j,\lambda}^{2^{\ast}_{\alpha}-2}Z_{j,l}(y)dy\right|
			\leq C \lambda^{\frac{N-2}{2}+n_{l}}\sum_{j=1}^{m}
			\frac{1}{(1+\lambda|x-z_{j}|)^{\frac{N-2}{2}+\tau}}.
		\end{equation}
		
	\noindent 	For the last term,  by Lemma \ref{B3} we have 
		\begin{equation}\label{5146}
			\int_{\mathbb{R}^N}\frac{1}{|y-x|^{N-2}}|h_m(y)|dy\leq C \|h_m\|_{\ast\ast}\lambda^{\frac{N-2}{2}}\sum_{j=1}^{m}
			\frac{1}{(1+\lambda|x-z_{j}|)^{\frac{N-2}{2}+\tau}}.
		\end{equation}

	\noindent 	In the following, we are going to estimate $c_l$, $l = 1, 2, \cdot\cdot\cdot,N$. Multiplying \eqref{c1} by $Z_{1,t} (t = 1, 2, \cdot\cdot\cdot,N)$ and integrating, we see that $c_l$ satisfies
		\begin{equation}\label{c3}\small
			\aligned
			\sum_{l=1}^{N}&\sum_{j=1}^{m}	\Big\langle (2^{\ast}_{\alpha}-1)\Big(|x|^{-\alpha}\ast |Z_{z_j,\lambda}|^{2^{\ast}_{\alpha}}\Big)Z_{z_j,\lambda}^{2^{\ast}_{\alpha}-2}Z_{j,l}+2^{\ast}_{\alpha}\Big(|x|^{-\alpha}\ast (Z_{z_j,\lambda}^{2^{\ast}_{\alpha}-1}
			Z_{j,l})\Big)Z_{z_j,\lambda}^{2^{\ast}_{\alpha}-1}, Z_{1,t}\Big\rangle c_{l}\\
			=&
			\Big\langle-\Delta \phi_m
			-(2^{\ast}_{\alpha}-1)K(x)\Big(|x|^{-\alpha}\ast (K(x)|Z_{\overline{r},\overline{x}'',\lambda}|^{2^{\ast}_{\alpha}})\Big)Z_{\overline{r},\overline{x}'',\lambda}^{2^{\ast}_{\alpha}-2}\phi_m
		\\	&-2^{\ast}_{\alpha}K(x)\Big(|x|^{-\alpha}\ast (K(x)Z_{\overline{r},\overline{x}'',\lambda}^{2^{\ast}_{\alpha}-1}
			\phi_m)\Big)Z_{\overline{r},\overline{x}'',\lambda}^{2^{\ast}_{\alpha}-1}, Z_{1,t}\Big\rangle-\langle h_m, Z_{1,t}\rangle.
			\endaligned
		\end{equation}
		
	\noindent 	From Lemma 2.1 in \cite{PWY} we get 
		\begin{equation}\label{c4}
			|\langle h_m, Z_{1,t}\rangle|\leq C\lambda^{n_{t}}\|h_m\|_{\ast\ast}.
		\end{equation}
		By standard calculations concerning a cut-off function $\xi$, we have
		$$\aligned
		&\Big|K(x)\Big(|x|^{-\alpha}\ast (K(x)Z_{\overline{r},\overline{x}'',\lambda}^{2^{\ast}_{\alpha}-1}
		\phi_m)\Big)\Big|\\
		\leq& C\|\phi_m\|_{\ast}\int_{\mathbb{R}^{N}}\frac{1}{|y|^{\alpha}}\sum_{j=1}^{m}\frac{\xi(x-y)\lambda^{\frac{N+2-\alpha}{2}}}{(1+\lambda|x-y-z_{j}|)^{N+2-\alpha-\tau_1\frac{N+2-\alpha}{N-2}}}\sum_{j=1}^{m}
		\frac{\lambda^{\frac{N-2}{2}}}{(1+\lambda|x-y-z_{j}|)^{\frac{N-2}{2}+\tau}}dy\\
		\leq& C\|\phi_m\|_{\ast}\sum_{i=1}^{m}\int_{\mathbb{R}^{N}}\frac{1}{|y|^{\alpha}}\frac{\xi(x-y)\lambda^{N-\frac{\alpha}{2}}}{(1+\lambda|x-y-z_{i}|)^{\frac{3N+2}{2}-\alpha+\tau-\tau_1\frac{N+2-\alpha}{N-2}}}dy\\
		&+C\|\phi_m\|_{\ast}\sum_{j\neq i}\int_{\mathbb{R}^{N}}\frac{1}{|y|^{\alpha}}\frac{\xi(x-y)\lambda^{\frac{N+2-\alpha}{2}}}{(1+\lambda|x-y-z_{i}|)^{N+2-\alpha-\tau_1\frac{N+2-\alpha}{N-2}}}
		\frac{\lambda^{\frac{N-2}{2}}}{(1+\lambda|x-y-z_{j}|)^{\frac{N-2}{2}+\tau}}dy\\
		=&O(\frac{m^{2}\|\phi_m\|_{\ast}}{\lambda^{\frac{\alpha}{2}-1}}),
		\endaligned$$
		where
		$$\aligned
		&\int_{\mathbb{R}^{N}}\frac{1}{|y|^{\alpha}}\frac{\xi(x-y)\lambda^{N-\frac{\alpha}{2}}}{(1+\lambda|x-y-z_{i}|)^{\frac{3N+2}{2}-\alpha+\tau-\tau_1\frac{N+2-\alpha}{N-2}}}dy\\
		\leq&\int_{B_{2\delta}(x-(r_0, x_0'' ))}\frac{1}{|y|^{\alpha}}\frac{\lambda^{N-\frac{\alpha}{2}}}{(1+\lambda|x-z_{i}-y|)^{\frac{3N+2}{2}-\alpha+\tau-\tau_1\frac{N+2-\alpha}{N-2}}}dy\\
		\leq&\int_{B_{2\delta}(x-(r_0, x_0'' ))}\frac{\lambda^{N-\frac{\alpha}{2}}}{(\lambda|x-y-z_{i}|)^{N-1}}dy
		=O(\frac{1}{\lambda^{\frac{\alpha}{2}-1}})
		\endaligned$$
		and
		$$\aligned
		&\int_{\mathbb{R}^{N}}\frac{1}{|y|^{\alpha}}\frac{\xi(x-y)\lambda^{\frac{N+2-\alpha}{2}}}{(1+\lambda|x-y-z_{i}|)^{N+2-\alpha-\tau_1\frac{N+2-\alpha}{N-2}}}
		\frac{\lambda^{\frac{N-2}{2}}}{(1+\lambda|x-y-z_{j}|)^{\frac{N-2}{2}+\tau}}dy\\		\leq&\frac{C}{(\lambda|z_{i}-z_{j}|)^{\frac{\tau}{2}}}\int_{\mathbb{R}^{N}}\frac{1}{|y|^{\alpha}}
		\Big(\frac{\xi(x-y)\lambda^{N-\frac{\alpha}{2}}}{(1+\lambda|x-y-z_{i}|)^{\frac{3N+2}{2}-\alpha+\frac{\tau}{2}-\tau_1\frac{N+2-\alpha}{N-2}}}\\
		&\hspace{8mm}+\frac{\xi(x-y)\lambda^{N-\frac{\alpha}{2}}}{(1+\lambda|x-y-z_{j}|)^{\frac{3N+2}{2}-\alpha+\frac{\tau}{2}}-\tau_1\frac{N+2-\alpha}{N-2}}\Big)dy\\
		=&O(\frac{1}{\lambda^{\frac{\alpha}{2}-1}}),  \ \  j\neq i.
		\endaligned$$
		Then, by Lemma \ref{B2} we get
		$$\aligned
		&\int_{\mathbb{R}^N}K(x)\Big(|x|^{-\alpha}\ast (K(x)Z_{\overline{r},\overline{x}'',\lambda}^{2^{\ast}_{\alpha}-1}
		\phi_m)\Big)Z_{\overline{r},\overline{y}'',\lambda}^{2^{\ast}_{\alpha}-1} Z_{1,t}dx\\
		&\leq C\|\phi_m\|_{\ast}\|Z_{\overline{r},\overline{x}'',\lambda}\|_{\ast}\frac{m^{2}}{\lambda^{\frac{\alpha}{2}-1}}
		\int_{\mathbb{R}^N}\sum_{j=1}^{m}
		\frac{\lambda^{\frac{N+2-\alpha}{2}}}{(1+\lambda|x-z_{j}|)^{N+2-\alpha-\tau_1\frac{N+2-\alpha}{N-2}}}
		\frac{\xi\lambda^{\frac{N-2}{2}+n_{t}}}{(1+\lambda|x-z_{1}|)^{N-2}}dx\\
		&\leq C\|\phi_m\|_{\ast}\frac{m^{2}\lambda^{n_{t}}}{\lambda^{\frac{\alpha}{2}-1}}
		\frac{m}{\lambda^{\frac{\alpha}{2}}}\int_{\mathbb{R}^N}
		\frac{1}{(1+|x-\lambda z_{1}|)^{2N-\alpha-\tau_1\frac{2N-\alpha}{N-2}}}dx\\
		&=O(\frac{\lambda^{n_{t}}\|\phi_m\|_{\ast}}{\lambda^{1+\varepsilon}}),
		\endaligned$$
		where $\alpha>5-\frac{6}{N-2}$ for some small constant $\varepsilon>0$. Thus,
		\begin{equation}\label{c61}
			\Big\langle\Big(K(x)|x|^{-\alpha}\ast (K(x)Z_{\overline{r},\overline{x}'',\lambda}^{2^{\ast}_{\alpha}-1}
			\phi_{m})\Big)Z_{\overline{r},\overline{y}'',\lambda}^{2^{\ast}_{\alpha}-1}, Z_{1,t}\Big\rangle= O(\frac{\lambda^{n_{t}}\|\phi_m\|_{\ast}}{\lambda^{1+\varepsilon}}).
		\end{equation}
		Similarly, we also have
		\begin{equation}\label{c62}
			\Big\langle K(x)\Big(|x|^{-\alpha}\ast (K(x)|Z_{\overline{r},\overline{x}'',\lambda}|^{2^{\ast}_{\alpha}})\Big)Z_{\overline{r},\overline{x}'',\lambda}^{2^{\ast}_{\alpha}-2}\phi_m, Z_{1,t}\Big\rangle= O\Big(\frac{\lambda^{n_{t}}\|\phi_m\|_{\ast}}{\lambda^{1+\varepsilon}}\Big).
		\end{equation}
		Notice that
		$$\aligned
		\int_{\mathbb{R}^{N}}\Delta Z_{1,t}\phi_mdx
		=\int_{\mathbb{R}^{N}}\big(\xi\Delta (U_{z_{1},\lambda})_{t}+(U_{z_{1},\lambda})_{t}\Delta\xi +2\nabla\xi\nabla (U_{z_{1},\lambda})_{t}\big) \phi_mdx,
		\endaligned$$
		where
		$$
		(U_{z_{1},\lambda})_{1}=\frac{\partial U_{z_{1},\lambda}}{\partial \lambda}, (U_{z_{1},\lambda})_{2}=\frac{\partial U_{z_{1},\lambda}}{\partial \overline{r}},
		(U_{z_{1},\lambda})_{k}=\frac{\partial U_{z_{1},\lambda}}{\partial \overline{x}_{k}''},
		\ \mbox{for }k=3,\cdot\cdot\cdot,N.
		$$
		On the one hand, applying the above argument we have
		$$\aligned
		&\int_{\mathbb{R}^{N}}\xi\Delta (U_{z_{1},\lambda})_{t} \phi_mdx\\
		&\leq C\|\phi_m\|_{\ast}\sum_{j=1}^{m}\int_{\mathbb{R}^N}\int_{\mathbb{R}^N}
		\frac{\xi\lambda^{N-\frac{\alpha}{2}}}{(1+\lambda|x-z_{1}|)^{2N-\alpha}}
		\frac{1}{|x-y|^{\alpha}}\frac{\lambda^{\frac{N+2-\alpha}{2}+n_{t}}}{(1+\lambda|y-z_{1}|)^{N+2-\alpha}}
		\frac{\lambda^{\frac{N-2}{2}}}{(1+\lambda|y-z_{j}|)^{\frac{N-2}{2}+\tau}}dxdy\\
		&\leq C\|\phi_m\|_{\ast}\sum_{j=1}^{m}\int_{\mathbb{R}^N}
		\frac{\lambda^{\frac{N+2}{2}+n_{t}}}{(1+\lambda|y-z_{1}|)^{N+2}}
		\frac{\lambda^{\frac{N-2}{2}}}{(1+\lambda|y-z_{j}|)^{\frac{N-2}{2}+\tau}}dy\\
		&\leq C\lambda^{n_{t}}\|\phi_m\|_{\ast}\sum_{j=1}^{m}\frac{1}{(\lambda|z_{1}-z_{j}|)^{\frac{N}{2}}}\int_{\Omega_{j}}
		\frac{1}{(1+|y-\lambda z_{j}|)^{N+1}}dy\\
		&=O(\frac{\lambda^{n_{t}}\|\phi_m\|_{\ast}}{\lambda^{\frac{N}{N-2}}})
		=O(\frac{\lambda^{n_{t}}\|\phi_m\|_{\ast}}{\lambda^{1+\varepsilon}}).
		\endaligned$$
		On the other hand, a direct calculation gives
		$$\aligned
		\int_{\mathbb{R}^{N}}(U_{z_{1},\lambda})_{t}\Delta\xi \phi_mdx
		\leq& C\|\phi_m\|_{\ast}
		\sum_{j=1}^{m}\int_{\mathbb{R}^N}
		\frac{|\Delta\xi|\lambda^{\frac{N-2}{2}+n_{t}}}{(1+\lambda|x-z_{1}|)^{N-2}}
		\frac{\lambda^{\frac{N-2}{2}}}{(1+\lambda|x-z_{j}|)^{\frac{N-2}{2}+\tau}}dx\\
		\leq& C\frac{\lambda^{n_{t}}}{\lambda}\|\phi_m\|_{\ast}
		\sum_{j=1}^{m}\int_{\mathbb{R}^N}
		\frac{|\Delta\xi|\lambda^{N}}{(1+\lambda|x- z_{1}|)^{N-2}}
		\frac{1}{(1+\lambda|x- z_{j}|)^{\frac{N}{2}+\tau}}dx\\
		=& O(\frac{\lambda^{n_{t}}\|\phi_m\|_{\ast}}{\lambda^{1+\varepsilon}}),
		\endaligned$$
		where we use the assumption 		
		$$
		\frac{1}{\lambda}\leq \frac{C}{1+\lambda|x-z_{j}|}
		$$
		which holds for any $|(r,x'')-(r_{0},x_{0}'')|\leq 2\delta$. 

		\noindent Similarly, we have
		$$
		\int_{\mathbb{R}^{N}}\nabla\xi\nabla (U_{z_{1},\lambda})_{t} \phi_mdx
		\leq C\|\phi_m\|_{\ast}
		\sum_{j=1}^{m}\int_{\mathbb{R}^N}
		\frac{\lambda^{\frac{N}{2}+n_{t}}}{(1+\lambda|x-z_{1}|)^{N-1}}
		\frac{|\nabla\xi|\lambda^{\frac{N-2}{2}}}{(1+\lambda|x-z_{j}|)^{\frac{N-2}{2}+\tau}}dx
		= O(\frac{\lambda^{n_{t}}\|\phi_m\|_{\ast}}{\lambda^{1+\varepsilon}}).
		$$
		\noindent Thus, we end up with 
		\begin{equation}\label{c63}
			\langle-\Delta \phi_m, Z_{1,t}\rangle= O\Big(\frac{\lambda^{n_{t}}\|\phi_m\|_{\ast}}{\lambda^{1+\varepsilon}}\Big).
		\end{equation}
		
		
	\noindent 	As a consequence of \eqref{c4}-\eqref{c63}, we have
		\begin{equation}\label{c7}\small
			\aligned
			&\Big\langle-\Delta \phi_m
			-(2^{\ast}_{\alpha}-1)K(x)\Big(|x|^{-\alpha}\ast (K(x)|Z_{\overline{r},\overline{x}'',\lambda}|^{2^{\ast}_{\alpha}})\Big)Z_{\overline{r},\overline{x}'',\lambda}^{2^{\ast}_{\alpha}-2}\phi_m
			-2^{\ast}_{\alpha}K(x)\Big(|x|^{-\alpha}\ast (K(x)Z_{\overline{r},\overline{x}'',\lambda}^{2^{\ast}_{\alpha}-1}
			\phi_m)\Big)Z_{\overline{r},\overline{x}'',\lambda}^{2^{\ast}_{\alpha}-1}, Z_{1,t}\Big\rangle\\
			&\hspace{8mm}-\langle h_m, Z_{1,t}\rangle
			=O\Big(\frac{\lambda^{n_{t}}\|\phi_m\|_{\ast}}{\lambda^{1+\varepsilon}}+\lambda^{n_{t}}\|h_m\|_{\ast\ast}\Big).
			\endaligned
		\end{equation}
		Moreover, one can easily check that the following holds 
		\begin{equation}\label{c8}
			\sum_{j=1}^{m}\Big\langle K(x)\Big(|x|^{-\alpha}\ast (K(x)|Z_{z_j,\lambda}|^{2^{\ast}_{\alpha}}\Big)Z_{z_j,\lambda}^{2^{\ast}_{\alpha}-2}Z_{j,l}), Z_{1,t}\Big\rangle=(\overline{c}+o(1))\delta_{tl}\lambda^{n_{l}}\lambda^{n_{t}},
		\end{equation}
		as well as 
		\begin{equation}\label{c81}
			\sum_{j=1}^{m}\Big\langle K(x)\Big(|x|^{-\alpha}\ast (K(x)Z_{z_j,\lambda}^{2^{\ast}_{\alpha}-1}
			Z_{j,l})\Big)Z_{z_j,\lambda}^{2^{\ast}_{\alpha}-1}, Z_{1,t}\Big\rangle=(\overline{c}'+o(1))\lambda^{n_{l}}\lambda^{n_{t}},
		\end{equation}
		for some constant $\overline{c} > 0$ and $\overline{c}' > 0$.
		Substituting \eqref{c7}, \eqref{c8} and \eqref{c81} in \eqref{c3}, we obtain
		\begin{equation}\label{c9}
			c_{l}=\frac{1}{\lambda^{n_{l}}}(o(\|\phi_m\|_{\ast})+O(\|h_m\|_{\ast\ast})).
		\end{equation}
		Thus,
		\begin{equation}\label{c10}
			\|\phi_m\|_{\ast}\leq o(1)+\|h_m\|_{\ast\ast}+\frac{\sum_{j=1}^{N}\frac{1}{(1+\lambda|x-z_{j}|)^{\frac{N-2}{2}+\tau+\theta}}}
			{\sum_{j=1}^{N}\frac{1}{(1+\lambda|x-z_{j}|)^{\frac{N-2}{2}+\tau}}}.
		\end{equation}
		Since $\|\phi_m\|_{\ast}= 1$, we obtain from \eqref{c10} that there is $R > 0$ such that
		\begin{equation}\label{c11}
			\|\lambda^{-\frac{N-2}{2}}\phi_m\|_{L^{\infty}(B_{\frac{R}{\lambda}}(z_{j}))}\geq a>0,
		\end{equation}
		for some $j$. However, $\widetilde{\phi}_m (x)=\lambda^{-\frac{N-2}{2}}\phi_m(\lambda(x-z_{j}))$ 
		converges uniformly, say  to $v\in D^{1,2}(\mathbb{R}^{N})$ in any compact set, and $v$ satisfies
		\begin{equation}\label{ib5}
			-\Delta v
			=(2^{\ast}_{\alpha}-1)\Big(\int_{\mathbb{R}^{N}}\frac{|U_{0,\Lambda}(y)|^{2^{\ast}_{\alpha}}}{|x-y|^{\alpha}}dy\Big)U_{0,\Lambda}^{2^{\ast}_{\alpha}-2}v
			+2^{\ast}_{\alpha}\Big(\int_{\mathbb{R}^{N}}\frac{U_{0,\Lambda}^{2^{\ast}_{\alpha}-1}(y)v(y)}{|x-y|^{\alpha}}dy\Big)U_{0,\Lambda}^{2^{\ast}_{\alpha}-1}\hspace{4.14mm}\mbox{in}\hspace{1.14mm} \mathbb{R}^N,
		\end{equation}
		for some $\Lambda\in[\Lambda_{1}, \Lambda_{1}]$. Since $v$ is perpendicular to the kernel of \eqref{ib5}, by the non-degeneracy of $U_{0,1}$, we deduce that $v = 0$ which is contradiction with \eqref{c11}.
	\end{proof}

	\noindent Together with Lemma \ref{C1}, a direct consequence of Proposition 4.1 in\cite{DFM} is the following 
	\begin{lem}\label{C2}
		There exist $m_0 > 0$ and a constant $C > 0$ independent of $m$, such that for all
		$m \geq m_0$ and all $h\in L^{\infty}(\R^N)$, problem \eqref{c1} has a unique solution $\phi \equiv L_m(h)$. Moreover, one has 
		\begin{equation}\label{c13}
			\|L_m(h)\|_{\ast}\leq C\|h\|_{\ast\ast},\quad |c_l|\leq \frac{C}{\lambda^{n_{l}}}\|h\|_{\ast\ast}.
		\end{equation}
	\end{lem}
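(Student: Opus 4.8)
The plan is to obtain Lemma~\ref{C2} from the a priori estimate of Lemma~\ref{C1} combined with the abstract linear theory of \cite{DFM}, in the by-now standard way for Lyapunov--Schmidt reductions. First I would recast \eqref{c1} in operator form. Let $E_m\subset H$ be the closed subspace cut out by the $N$ orthogonality conditions appearing in the last line of \eqref{c1}, and let $\Pi_m\colon H\to E_m$ denote the orthogonal projection with respect to the $D^{1,2}$ inner product. Testing \eqref{c1} against elements of $E_m$ and using the Riesz isomorphism, solving \eqref{c1} for $\phi\in E_m$ is equivalent to the equation $(\mathrm{Id}-\ck_m)\phi=\Pi_m\widetilde h$ in $E_m$, where $\widetilde h\in H$ represents the functional $\psi\mapsto\int_{\mathbb R^N}h\psi$ (finite since we may assume $\|h\|_{\ast\ast}<\infty$) and $\ck_m\colon E_m\to E_m$ is the bounded operator encoding the two linear nonlocal terms on the left-hand side of \eqref{c1}. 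Using the Hardy--Littlewood--Sobolev inequality (Proposition~\ref{pro1.1}) together with the local compactness of the Sobolev embedding and the decay of $Z_{\overline r,\overline x'',\lambda}$, one checks that $\ck_m$ is compact and maps $E_m$ into itself; hence $\mathrm{Id}-\ck_m$ is Fredholm of index zero on $E_m$.

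Next I would prove injectivity of $\mathrm{Id}-\ck_m$ for all large $m$; by the Fredholm alternative this yields bijectivity and defines the solution operator $\phi\equiv L_m(h)$. Injectivity is exactly Lemma~\ref{C1} applied with $h_m\equiv 0$: were there, along some sequence $m\to\infty$, a nontrivial $\phi_m\in E_m$ solving the homogeneous version of \eqref{c1}, then after normalizing $\|\phi_m\|_{\ast}=1$ Lemma~\ref{C1} would force $\|\phi_m\|_{\ast}\to0$, a contradiction; hence there is $m_0$ such that $\mathrm{Id}-\ck_m$ is invertible for all $m\ge m_0$. (If $h$ does not already belong to the symmetry class defining $H$, one first replaces it by its symmetrization, which does not increase $\|\cdot\|_{\ast\ast}$.)

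The uniform bound $\|L_m(h)\|_{\ast}\le C\|h\|_{\ast\ast}$ then follows by the same compactness--contradiction scheme: if it failed, there would be $m_k\to\infty$ and data $h_{m_k}$ with $\|L_{m_k}(h_{m_k})\|_{\ast}>k\|h_{m_k}\|_{\ast\ast}$; normalizing $\|L_{m_k}(h_{m_k})\|_{\ast}=1$ forces $\|h_{m_k}\|_{\ast\ast}\to0$, and since $\phi_{m_k}:=L_{m_k}(h_{m_k})$ solves \eqref{c1} with datum $h_{m_k}$, Lemma~\ref{C1} yields $\|\phi_{m_k}\|_{\ast}\to0$, contradicting the normalization. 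For the multipliers, the identity \eqref{c9} established inside the proof of Lemma~\ref{C1} is valid for \emph{any} solution of \eqref{c1}; combined with the bound just obtained it gives $|c_l|\le\lambda^{-n_l}\bigl(o(\|\phi\|_{\ast})+O(\|h\|_{\ast\ast})\bigr)\le C\lambda^{-n_l}\|h\|_{\ast\ast}$, which is the second estimate in \eqref{c13}.

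The point I expect to require most care is the bookkeeping between the two norms: existence and uniqueness are produced in the Hilbert space $H$ via Fredholm theory, whereas all quantitative information lives in the weighted norm $\|\cdot\|_{\ast}$, so one must verify that the $H$-solution given by the Fredholm alternative is precisely the $\|\cdot\|_{\ast}$-bounded object to which Lemma~\ref{C1} applies. Invoking the abstract Proposition~4.1 of \cite{DFM} is convenient exactly because it packages this transfer once Lemma~\ref{C1} is available; the remaining verifications — compactness of $\ck_m$, its invariance of the symmetry class, and the observation that \eqref{c9} only uses estimates holding for every solution of \eqref{c1} — are routine.
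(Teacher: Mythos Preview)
Your proposal is correct and follows essentially the same approach as the paper: the paper simply states that Lemma~\ref{C2} is, together with Lemma~\ref{C1}, a direct consequence of Proposition~4.1 in \cite{DFM}, and what you have written is precisely an unpacking of that Fredholm/contradiction argument, including the recovery of the $c_l$ bound from \eqref{c9}.
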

	
	\noindent Next we consider:
	\begin{equation}\label{c14}
		\left\{\begin{array}{l}
			\displaystyle -\Delta (Z_{\overline{r},\overline{x}'',\lambda}+\phi)
			-K(r,x'')\Big(|x|^{-\alpha}\ast (K(r,x'')|(Z_{\overline{r},\overline{x}'',\lambda}+\phi)|^{2^{\ast}_{\alpha}})\Big)(Z_{\overline{r},\overline{x}'',\lambda}+\phi)^{2^{\ast}_{\alpha}-1}\\
			\displaystyle \hspace{10.14mm}=\sum_{l=1}^{N}c_{l}\sum_{j=1}^{m}\Big[(2^{\ast}_{\alpha}-1)\Big(|x|^{-\alpha}\ast |Z_{z_j,\lambda}|^{2^{\ast}_{\alpha}}\Big)Z_{z_j,\lambda}^{2^{\ast}_{\alpha}-2}Z_{j,l}+2^{\ast}_{\alpha}\Big(|x|^{-\alpha}\ast (Z_{z_j,\lambda}^{2^{\ast}_{\alpha}-1}
			Z_{j,l})\Big)Z_{z_j,\lambda}^{2^{\ast}_{\alpha}-1}\Big]\hspace{4.14mm}\mbox{in}\hspace{1.14mm} \mathbb{R}^N,\\
			\displaystyle \phi\in H, \ \ \sum_{j=1}^{m}\int_{\mathbb{R}^{N}}\Big[(2^{\ast}_{\alpha}-1)\Big(|x|^{-\alpha}\ast |Z_{z_j,\lambda}|^{2^{\ast}_{\alpha}}\Big)Z_{z_j,\lambda}^{2^{\ast}_{\alpha}-2}Z_{j,l}\phi +2^{\ast}_{\alpha}\Big(|x|^{-\alpha}\ast (Z_{z_j,\lambda}^{2^{\ast}_{\alpha}-1}
			Z_{j,l})\Big)Z_{z_j,\lambda}^{2^{\ast}_{\alpha}-1}\phi\Big] dx=0,\\
			\displaystyle \hspace{10.14mm}l=1,2,\cdots,N.
		\end{array}
		\right.
	\end{equation}
	We can rewrite \eqref{c14} as
	\begin{equation}\label{c16}
		\aligned
		&-\Delta \phi
		-(2^{\ast}_{\alpha}-1)K(r,x'')\Big(|x|^{-\alpha}\ast (K(r,x'')|Z_{\overline{r},\overline{x}'',\lambda}|^{2^{\ast}_{\alpha}})\Big)Z_{\overline{r},\overline{x}'',\lambda}^{2^{\ast}_{\alpha}-2}\phi\\
		&\hspace{1.5cm}-2^{\ast}_{\alpha}K(r,x'')\Big(|x|^{-\alpha}\ast (K(r,x'')Z_{\overline{r},\overline{x}'',\lambda}^{2^{\ast}_{\alpha}-1}\phi)\Big)Z_{\overline{r},\overline{x}'',\lambda}^{2^{\ast}_{\alpha}-1}\\
		&=N(\phi)+l_{m}+\sum_{l=1}^{N}c_{l}\sum_{j=1}^{m}\Big[(2^{\ast}_{\alpha}-1)\Big(|x|^{-\alpha}\ast |Z_{z_j,\lambda}|^{2^{\ast}_{\alpha}}\Big)Z_{z_j,\lambda}^{2^{\ast}_{\alpha}-2}Z_{j,l}\\
		&\quad +2^{\ast}_{\alpha}\Big(|x|^{-\alpha}\ast (Z_{z_j,\lambda}^{2^{\ast}_{\alpha}-1}
		Z_{j,l})\Big)Z_{z_j,\lambda}^{2^{\ast}_{\alpha}-1}\Big]
		\hspace{1.14mm}\mbox{in}\hspace{1.14mm} \mathbb{R}^N,
		\endaligned
	\end{equation}
	where
	$$\aligned
	N(\phi)=&K(r,x'')\Big(|x|^{-\alpha}\ast (K(r,x'')|(Z_{\overline{r},\overline{x}'',\lambda}+\phi)|^{2^{\ast}_{\alpha}})\Big)(Z_{\overline{r},\overline{x}'',\lambda}+\phi)^{2^{\ast}_{\alpha}-1}\\
	&-K(r,x'')\Big(|x|^{-\alpha}\ast (K(r,x'')|Z_{\overline{r},\overline{x}'',\lambda}|^{2^{\ast}_{\alpha}})\Big)Z_{\overline{r},\overline{x}'',\lambda}^{2^{\ast}_{\alpha}-1}
	-2^{\ast}_{\alpha}K(r,x'')\Big(|x|^{-\alpha}\ast (K(r,x'')Z_{\overline{r},\overline{x}'',\lambda}^{2^{\ast}_{\alpha}-1}\phi)\Big)Z_{\overline{r},\overline{x}'',\lambda}^{2^{\ast}_{\alpha}-1}\\
	&-(2^{\ast}_{\alpha}-1)K(r,x'')\Big(|x|^{-\alpha}\ast (K(r,x'')|Z_{\overline{r},\overline{x}'',\lambda}|^{2^{\ast}_{\alpha}})\Big)Z_{\overline{r},\overline{x}'',\lambda}^{2^{\ast}_{\alpha}-2}\phi
	\endaligned$$
	and
	$$\aligned
	l_{m}=&K(r,x'')\Big(|x|^{-\alpha}\ast (K(r,x'')|Z_{\overline{r},\overline{x}'',\lambda}|^{2^{\ast}_{\alpha}})\Big)Z_{\overline{r},\overline{x}'',\lambda}^{2^{\ast}_{\alpha}-1}
	-\sum_{j=1}^{m}\xi\Big(|x|^{-\alpha}\ast |U_{z_j,\lambda}|^{2^{\ast}_{\alpha}}\Big)U_{z_j,\lambda}^{2^{\ast}_{\alpha}-1}\\
	&+Z_{\overline{r},\overline{x}'',\lambda}^{\ast}\Delta\xi+2\nabla\xi\nabla Z_{\overline{r},\overline{x}'',\lambda}^{\ast}.
	\endaligned$$
	
	\noindent In order to apply the contraction mapping theorem to have that \eqref{c16} is uniquely solvable, we need to estimate $N(\phi)$ and $l_m$.

	\begin{lem}\label{C4}
		There is a constant $C> 0$, such that
		\begin{equation}\label{c17}
			\|N(\phi)\|_{\ast\ast}\leq C\|\phi\|_{\ast}^{2}.
		\end{equation}
	\end{lem}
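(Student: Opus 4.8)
The plan is to expand $N(\phi)$ as a sum of ``quadratic-in-$\phi$'' remainders coming from the Taylor expansion of the two nonlocal cubic-type factors, and then estimate each term in the $\|\cdot\|_{\ast\ast}$ norm by brute force using the pointwise bound $|\phi(x)|\le \|\phi\|_{\ast}\,\lambda^{(N-2)/2}\sum_j (1+\lambda|x-z_j|)^{-(N-2)/2-\tau}$ together with the convolution estimates of the Appendix (Lemmas \ref{B2}, \ref{B3}, \ref{B4}) and the ``separation'' bound $\sum_{j\ge 2}(\lambda|z_1-z_j|)^{-\tau_1}\le C$. Concretely, write $A:=|x|^{-\alpha}\ast(K|Z+\phi|^{2^\ast_\alpha})$ and $A_0:=|x|^{-\alpha}\ast(K|Z|^{2^\ast_\alpha})$ (abbreviating $Z=Z_{\overline r,\overline x'',\lambda}$), and similarly split the outer power $(Z+\phi)^{2^\ast_\alpha-1}=Z^{2^\ast_\alpha-1}+(2^\ast_\alpha-1)Z^{2^\ast_\alpha-2}\phi+R_{\rm out}$, where $|R_{\rm out}|\le C\big(|Z|^{2^\ast_\alpha-3}\phi^2+|\phi|^{2^\ast_\alpha-1}\big)$ when $2^\ast_\alpha\ge 3$ and $|R_{\rm out}|\le C|\phi|^{2^\ast_\alpha-1}$ when $2<2^\ast_\alpha< 3$, and likewise $|Z+\phi|^{2^\ast_\alpha}=|Z|^{2^\ast_\alpha}+2^\ast_\alpha |Z|^{2^\ast_\alpha-2}Z\phi+R_{\rm in}$ with the analogous bound on $R_{\rm in}$. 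Multiplying out, $N(\phi)$ becomes a finite sum whose typical terms are of the schematic form $K(A_0)\,R_{\rm out}$, $K\,(|x|^{-\alpha}\ast(K\,R_{\rm in}))\,Z^{2^\ast_\alpha-1}$, $K\,(|x|^{-\alpha}\ast(K|Z|^{2^\ast_\alpha-2}Z\phi))\,Z^{2^\ast_\alpha-2}\phi$, and a genuinely ``triple'' term $K\,(|x|^{-\alpha}\ast(K\,R_{\rm in}))\,R_{\rm out}$; every one of them carries at least two factors of $\phi$.

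Next I would estimate a model term, say $I:=K\,\big(|x|^{-\alpha}\ast(K|Z|^{2^\ast_\alpha-2}Z\phi)\big)\,|Z|^{2^\ast_\alpha-2}|\phi|$, since the others are handled in the same way with at worst a harmless power of $\|\phi\|_\ast$ dropping out (for the terms with $|\phi|^{2^\ast_\alpha-1}$ one uses $\|\phi\|_\ast\le 1$, which is legitimate because it suffices to prove the estimate for $\|\phi\|_\ast$ small). Using $K$ bounded, the pointwise bound on $\phi$, and $|Z|^{2^\ast_\alpha-1}\le C\lambda^{(N+2-\alpha)/2}(1+\lambda|y-z_k|)^{-(N+2-\alpha-\tau_1\frac{N+2-\alpha}{N-2})}$ in the cone $\Omega_k$ (exactly as in the proof of Lemma \ref{C1}, equations \eqref{51411}--\eqref{c02}), I would first bound the inner convolution by $C\|\phi\|_\ast\lambda^{\alpha/2}\sum_k(1+\lambda|x-z_k|)^{-\min\{\alpha,(N+2)/2\}}$, then combine with the outer factor $|Z|^{2^\ast_\alpha-2}|\phi|\le C\|\phi\|_\ast\lambda^{(N+2)/2}\sum_k(1+\lambda|x-z_k|)^{-\cdots}$, apply Lemma \ref{B4} to collapse the product of two sums (using the separation sum $\sum_{j\ge2}(\lambda|z_1-z_j|)^{-\tau_1}\le C$ to absorb the off-diagonal contributions), and read off a bound of the form $C\|\phi\|_\ast^2\,\lambda^{(N+2)/2}\sum_k(1+\lambda|x-z_k|)^{-(N+2)/2-\tau-\theta}$ for some $\theta>0$; dividing by $\lambda^{(N+2)/2}\sum_k(1+\lambda|x-z_k|)^{-(N+2)/2-\tau}$ and taking the sup gives $\|I\|_{\ast\ast}\le C\|\phi\|_\ast^2$, the extra decay $(1+\lambda|x-z_k|)^{-\theta}$ being more than enough. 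The role of the hypothesis $\alpha>5-\frac{6}{N-2}$ is, as in Lemma \ref{C1}, to guarantee that in the exponents $\frac{3N+2}{2}-\alpha-\tau_1\frac{2N-\alpha}{N-2}+\tau$ etc. one still has room to apply Lemma \ref{B3} with a positive surplus $\theta$; I would verify the relevant inequalities on exponents once and reuse them.

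Finally I would note that all remaining terms are of the same nature: the term $K(A_0)R_{\rm out}$ and the purely-outer $|\phi|^{2^\ast_\alpha-1}$ piece are even easier because $A_0$ is the convolution already estimated in Lemma \ref{C1}, and the triple term with $R_{\rm in}$ inside the convolution is bounded by treating $R_{\rm in}$ via the same pointwise estimates; collecting all contributions yields $\|N(\phi)\|_{\ast\ast}\le C\|\phi\|_\ast^2$. The main obstacle, and the only place real care is needed, is the bookkeeping of the exponents when two weight-sums are multiplied and re-summed over the $m$ bubbles: one must choose $\tau_1$ small enough (relative to $\tau=\frac{N-4}{N-2}$ and to $\alpha$) that every intermediate exponent that feeds into Lemmas \ref{B3}--\ref{B4} stays in the admissible range and produces a strictly positive gain $\theta$; once the numerology is fixed, each individual estimate is routine and parallels the computations already carried out in the proof of Lemma \ref{C1}.
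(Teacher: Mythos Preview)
Your plan is correct and matches the paper's proof in all essential respects: expand $N(\phi)$ into terms carrying at least two factors of $\phi$, bound $\phi$ pointwise via the $\|\cdot\|_\ast$ norm, and push each piece through the convolution lemmas (\ref{B2}, \ref{B4}) exactly as in the computations \eqref{51411}--\eqref{c02} from Lemma~\ref{C1}.

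Two small remarks where your write-up diverges from the paper. First, the case split $2^\ast_\alpha\ge 3$ versus $2^\ast_\alpha<3$ is unnecessary here: under the standing hypotheses $N\ge 5$ and $\alpha>5-\tfrac{6}{N-2}$ one always has $2^\ast_\alpha-3\le 0$, and the paper opens its proof with exactly this observation, which immediately gives the simpler remainder $|R_{\rm out}|\le C|\phi|^{2^\ast_\alpha-1}$ and reduces the list of terms to four. Second, your aside that the condition $\alpha>5-\tfrac{6}{N-2}$ is needed to create a strictly positive surplus $\theta$ in the exponents is not how the paper uses it in this lemma; the exponent bookkeeping here closes with the weight $(1+\lambda|x-z_j|)^{-(N+2)/2-\tau}$ on the nose, without an extra $\theta$, and the hypothesis on $\alpha$ enters only through the inequality $2^\ast_\alpha\le 3$ just mentioned (its sharper use is elsewhere, in the inner-product estimates of Lemma~\ref{C1}). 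Neither point affects the validity of your argument.
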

	\begin{proof}
		Notice that $2^{\ast}_{\alpha}-3\leq 0$, so we have
		$$\aligned
		|N(\phi)|\leq &C\left|\Big(|x|^{-\alpha}\ast (Z_{\overline{r},\overline{x}'',\lambda}^{2^{\ast}_{\alpha}-1}\phi)\Big)Z_{\overline{r},\overline{x}'',\lambda}^{2^{\ast}_{\alpha}-2}\phi+\Big(|x|^{-\alpha}\ast (Z_{\overline{r},\overline{x}'',\lambda}^{2^{\ast}_{\alpha}-2}\phi^2)\Big)Z_{\overline{r},\overline{x}'',\lambda}^{2^{\ast}_{\alpha}-1}\right|\\
		&+C\left|\Big(|x|^{-\alpha}\ast (Z_{\overline{r},\overline{x}'',\lambda}^{2^{\ast}_{\alpha}-2}\phi^2)\Big)Z_{\overline{r},\overline{x}'',\lambda}^{2^{\ast}_{\alpha}-2}\phi+\Big(|x|^{-\alpha}\ast \phi^{2^{\ast}_{\alpha}}\Big)\phi^{2^{\ast}_{\alpha}-1}\right|.
		\endaligned$$
		Using \eqref{51411}, Lemma \ref{B4} and the H\"older inequality, we have
		$$\aligned
		&\Big(|x|^{-\alpha}\ast (Z_{\overline{r},\overline{x}'',\lambda}^{2^{\ast}_{\alpha}-1}\phi)\Big)Z_{\overline{r},\overline{x}'',\lambda}^{2^{\ast}_{\alpha}-2}|\phi|\\
		\leq &C \|\phi\|^2_{\ast}\lambda^{\frac{N+2}{2}}\sum_{j=1}^{m}\frac{1}{(1+\lambda|x-z_{j}|)^{\min\{\alpha,\frac{N+2}{2}\}}}\Big(\sum_{j=1}^{m}
		\frac{1}{(1+\lambda|x-z_{j}|)^{\frac{N-2}{2}+\tau}}\Big)^{2^{\ast}_{\alpha}-1}\\
		\leq &C \|\phi\|_{\ast}^{2}
		\lambda^{\frac{N+2}{2}}
		\sum_{j=1}^{m}\frac{1}{(1+\lambda|x-z_{j}|)^{\min\{\alpha,\frac{N+2}{2}\}}}\sum_{j=1}^{m}
		\frac{1}{(1+\lambda|x-z_{j}|)^{\frac{N+2-\alpha}{2}+\tau}}\Big(\sum_{j=1}^{m}\frac{1}{(1+\lambda|x-z_{j}|)^{\tau}}\Big)^{2^{\ast}_{\alpha}-2}\\
		\leq &C \|\phi\|_{\ast}^{2}
		\lambda^{\frac{N+2}{2}}\sum_{j=1}^{m}
		\frac{1}{(1+\lambda|x-z_{j}|)^{\frac{N+2}{2}+\tau}}.
		\endaligned$$
		By the similar argument in \eqref{51411} and Lemma \ref{B4}, we obtain
		$$
		\aligned
		&|x|^{-\alpha}\ast (Z_{\overline{r},\overline{y}'',\lambda}^{2^{\ast}_{\alpha}-2}
		|\phi|^2)\\&\leq C\|\phi\|^2_{\ast}\bigg(|x|^{-\alpha}\ast\Big(\big(\sum_{j=1}^{m}
		\frac{\lambda^{\frac{N-2}{2}}}{(1+\lambda|x-z_{j}|)^{N-2}}\big)^{2^{\ast}_{\alpha}-2}\big(\sum_{i=1}^{m}
		\frac{\lambda^{\frac{N-2}{2}}}{(1+\lambda|x-z_{i}|)^{\frac{N-2}{2}+\tau}}\big)^2\Big)\bigg) \\
		&\leq C\|\phi\|^2_{\ast}\sum_{j=1}^{m}\int_{\Omega_{j}}\frac{1}{|x-y|^{\alpha}}\Big(\frac{\lambda^{\frac{4-\alpha}{2}}}{(1+\lambda|x-z_{j}|)^{4-\alpha-\tau_1\frac{4-\alpha}{N-2}}}
		\frac{\lambda^{N-2}}{(1+\lambda|x-z_{j}|)^{N-2+2\tau-2\tau_1}}\Big)dy \\
		&\leq C\|\phi\|^2_{\ast}\sum_{j=1}^{m}\int_{\Omega_{j}}\frac{1}{|x-y|^{\alpha}}\frac{\lambda^{N-\frac{\alpha}{2}}}{(1+\lambda|x-z_{j}|)^{N+2-\alpha-\tau_1\frac{2N-\alpha}{N-2}+2\tau}}dy \\
		&\leq C\|\phi\|^2_{\ast}\sum_{j=1}^{m}\frac{\lambda^{\frac{\alpha}{2}}}{(1+\lambda|x-z_{j}|)^{\min\{\alpha,2\}+\tau}}.
		\endaligned
		$$
		Then,
		$$\aligned
		&\Big(|x|^{-\alpha}\ast (Z_{\overline{r},\overline{x}'',\lambda}^{2^{\ast}_{\alpha}-2}\phi^2)\Big)Z_{\overline{r},\overline{x}'',\lambda}^{2^{\ast}_{\alpha}-1}\\
		\leq &C \|\phi\|^2_{\ast}\sum_{j=1}^{m}\int_{\Omega_{j}}\frac{1}{|x-y|^{\alpha}}\frac{\lambda^{N-\frac{\alpha}{2}}}{(1+\lambda|y-z_{j}|)^{N+2-\alpha-\tau_1\frac{2N-\alpha}{N-2}+\tau}}dy\Big(\sum_{j=1}^{m}\frac{\lambda^{\frac{N-2}{2}}}{(1+\lambda|x-z_{j}|)^{\frac{N-2}{2}+\tau}}\Big)^{2^{\ast}_{\alpha}-1}\\
		\leq &C \|\phi\|^2_{\ast}\lambda^{\frac{N+2}{2}}\sum_{j=1}^{m}\frac{1}{(1+\lambda|x-z_{j}|)^{\min\{\alpha,2\}+\tau}}\sum_{j=1}^{m}
		\frac{1}{(1+\lambda|x-z_{j}|)^{\frac{N+2-\alpha}{2}+\tau}}\Big(\sum_{j=1}^{m}\frac{1}{(1+\lambda|x-z_{j}|)^{\tau}}\Big)^{2^{\ast}_{\alpha}-2}\\
		\leq &C \|\phi\|_{\ast}^{2}
		\lambda^{\frac{N+2}{2}}\sum_{j=1}^{m}
		\frac{1}{(1+\lambda|x-z_{j}|)^{\frac{N+2}{2}+\tau}}.
		\endaligned$$
		Similarly, we also have
		$$
		\Big(|x|^{-\alpha}\ast (Z_{\overline{r},\overline{x}'',\lambda}^{2^{\ast}_{\alpha}-2}\phi^2)\Big)Z_{\overline{r},\overline{x}'',\lambda}^{2^{\ast}_{\alpha}-2}\phi
		\leq C \|\phi\|_{\ast}^{2}
		\lambda^{\frac{N+2}{2}}\sum_{j=1}^{m}
		\frac{1}{(1+\lambda|x-z_{j}|)^{\frac{N+2}{2}+\tau}}.
		$$
		We also have for the last term, 
		$$
		\aligned
		|x|^{-\alpha}\ast |\phi|^{2^{\ast}_{\alpha}}&\leq C\|\phi\|^{2^{\ast}_{\alpha}}_{\ast}\Big(|x|^{-\alpha}\ast\Big(\sum_{j=1}^{m}\frac{\lambda^{\frac{N-2}{2}}}{(1+\lambda|x-z_{j}|)^{\frac{N-2}{2}+\tau}}\Big)^{2^{\ast}_{\alpha}}\Big)\\
		&\leq C\|\phi\|^{2^{\ast}_{\alpha}}_{\ast}\bigg(|x|^{-\alpha}\ast\Big(\sum_{j=1}^{m}
		\frac{\lambda^{N-\frac{\alpha}{2}}}{(1+\lambda|x-z_{j}|)^{\frac{2N-\alpha}{2}+\tau}}\Big(\sum_{j=1}^{m}\frac{1}{(1+\lambda|x-z_{j}|)^{\tau}}\Big)^{2^{\ast}_{\alpha}-1}\Big) \bigg)\\
		&\leq C\|\phi\|^{2^{\ast}_{\alpha}}_{\ast}\sum_{j=1}^{m}\Big(|x|^{-\alpha}\ast\frac{\lambda^{N-\frac{\alpha}{2}}}{(1+\lambda|x-z_{j}|)^{N-\frac{\alpha}{2}+\tau}} \Big)\\
		&\leq C\|\phi\|^{2^{\ast}_{\alpha}}_{\ast}\sum_{j=1}^{m}\frac{\lambda^{\frac{\alpha}{2}}}{(1+\lambda|x-z_{j}|)^{\frac{\alpha}{2}}},
		\endaligned
		$$
		and eventually the following 
		$$\aligned
		&\Big(|x|^{-\alpha}\ast |\phi|^{2^{\ast}_{\alpha}}\Big)|\phi|^{2^{\ast}_{\alpha}-1}\\
		\leq &C \|\phi\|^{2\cdot2^{\ast}_{\alpha}-1}_{\ast}\lambda^{\frac{N+2}{2}}\sum_{j=1}^{m}\frac{1}{(1+\lambda|x-z_{j}|)^{\frac{\alpha}{2}}}\sum_{j=1}^{m}
		\frac{1}{(1+\lambda|x-z_{j}|)^{\frac{N+2-\alpha}{2}+\tau}}\Big(\sum_{j=1}^{m}\frac{1}{(1+\lambda|x-z_{j}|)^{\tau}}\Big)^{2^{\ast}_{\alpha}-2}\\
		\leq &C \|\phi\|_{\ast}^{2\cdot2^{\ast}_{\alpha}-1}
		\lambda^{\frac{N+2}{2}}\sum_{j=1}^{m}
		\frac{1}{(1+\lambda|x-z_{j}|)^{\frac{N+2}{2}+\tau}}.
		\endaligned$$
		Joining the above estimates yields 
		$$
		\|N(\phi)\|_{\ast\ast}\leq C\|\phi\|_{\ast}^{\min\{2\cdot2^{\ast}_{\alpha}-1,2\}}\leq C\|\phi\|_{\ast}^{2}
		$$
		and in turn 
		$$
		\|N(\phi)\|_{\ast\ast}\leq C\|\phi\|_{\ast}^{2}.
		$$
	\end{proof}

	\begin{lem}\label{C5}
		There is a small constant $\varepsilon> 0$, such that
		\begin{equation}\label{c18}
			\|l_{m}\|_{\ast\ast}\leq C(\frac{1}{\lambda})^{1+\varepsilon}.
		\end{equation}
	\end{lem}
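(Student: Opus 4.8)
\emph{Proof strategy.} The plan is to split $l_m=\mathcal{L}_1+\mathcal{L}_2$, where
$$
\mathcal{L}_2:=Z_{\overline{r},\overline{x}'',\lambda}^{\ast}\Delta\xi+2\nabla\xi\cdot\nabla Z_{\overline{r},\overline{x}'',\lambda}^{\ast}
$$
collects the terms carried by the derivatives of the cut-off, and
$$
\mathcal{L}_1:=K(r,x'')\Big(|x|^{-\alpha}\ast (K(r,y'')|Z_{\overline{r},\overline{x}'',\lambda}|^{2^{\ast}_{\alpha}})\Big)Z_{\overline{r},\overline{x}'',\lambda}^{2^{\ast}_{\alpha}-1}-\sum_{j=1}^{m}\xi\Big(|x|^{-\alpha}\ast |U_{z_j,\lambda}|^{2^{\ast}_{\alpha}}\Big)U_{z_j,\lambda}^{2^{\ast}_{\alpha}-1}
$$
is the genuine consistency error of the ansatz, and then to estimate each summand in $\|\cdot\|_{\ast\ast}$, following the computational pattern already used in the proof of Lemma \ref{C1}.

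\textbf{The cut-off term.} Since $|(\overline{r},\overline{x}'')-(r_0,x_0'')|\le\lambda^{-(1-\theta)}\ll\delta$ by \eqref{uxi}, on $\mathrm{supp}\,\nabla\xi\subset\{\delta\le|(|x'|,x'')-(r_0,x_0'')|\le2\delta\}$ one has $|x-z_j|\ge c\delta$ for every $j$. Inserting the pointwise bounds $U_{z_j,\lambda}(x)+\lambda^{-1}|\nabla U_{z_j,\lambda}(x)|\le C\lambda^{\frac{N-2}{2}}(1+\lambda|x-z_j|)^{-(N-2)}$, the separation $|z_i-z_j|\gtrsim|i-j|/m$ and the summation estimates of the Appendix (Lemma \ref{B2}, Lemma \ref{B3}), one gets on that annulus $|\mathcal{L}_2(x)|\le C\,m\,\delta^{1-N}\lambda^{-\frac{N-2}{2}}$, while $\sum_{j}(1+\lambda|x-z_j|)^{-\frac{N+2}{2}-\tau}\ge c\,m\,\delta^{1-\frac{N+2}{2}-\tau}\lambda^{-\frac{N+2}{2}-\tau}$ there; taking the quotient the factors $m$ cancel, and recalling $\lambda\sim m^{\frac{N-2}{N-4}}$ and $\tau=\frac{N-4}{N-2}$ one obtains $\|\mathcal{L}_2\|_{\ast\ast}\le C\lambda^{-\frac{N-2}{2}+\tau}=C\lambda^{-1-\frac{(N-4)^2}{2(N-2)}}$, which is already of the required form for every $N\ge5$ (and does not use the restriction on $\alpha$).

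\textbf{The interaction term.} Writing $Z_{\overline{r},\overline{x}'',\lambda}=\xi\,Z_{\overline{r},\overline{x}'',\lambda}^{\ast}=\xi\sum_jU_{z_j,\lambda}$ and expanding the powers, I would split $\mathcal{L}_1$ into three parts. First, a \emph{potential-mismatch} part, the diagonal contribution $\sum_j\big[K(r,x'')(|x|^{-\alpha}\ast K(r,y'')U_{z_j,\lambda}^{2^{\ast}_{\alpha}})-(|x|^{-\alpha}\ast U_{z_j,\lambda}^{2^{\ast}_{\alpha}})\big]U_{z_j,\lambda}^{2^{\ast}_{\alpha}-1}$: by \textbf{(K1)}--\textbf{(K2)} the point $(r_0,x_0'')$ is a critical point of $K\in C^{3}$ with $K(r_0,x_0'')=1$, hence $|K(r,x'')-1|\le C|(|x'|,x'')-(r_0,x_0'')|^{2}\le C\big(|x-z_j|^{2}+\lambda^{-2(1-\theta)}\big)$ on $B_{\vartheta}(z_j)$, off which $U_{z_j,\lambda}$ is negligible; since $|x-z_j|^{2}=\lambda^{-2}(\lambda|x-z_j|)^{2}$ merely shifts the decay, this produces a gain $\lambda^{-2+2\theta}$, more than enough after comparison with the weight $\lambda^{\frac{N+2}{2}}(1+\lambda|x-z_j|)^{-\frac{N+2}{2}-\tau}$. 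Second, a \emph{cut-off tail} part, obtained by replacing the powers of $\xi$ (those outside and those inside the convolution) by $\xi$: this is supported where $\xi<1$, hence away from all $z_j$, and is estimated exactly as $\mathcal{L}_2$. Third, the \emph{cross} part, governed by $\big(\sum_jU_{z_j,\lambda}\big)^{p}-\sum_jU_{z_j,\lambda}^{p}$ for $p\in\{2^{\ast}_{\alpha},2^{\ast}_{\alpha}-1\}$, dominated by finite sums of overlap terms $U_{z_i,\lambda}^{a}U_{z_j,\lambda}^{b}$ with $i\ne j$; here I would treat the Riesz convolutions exactly as in \eqref{51411} and \eqref{5143} (keeping the saturated exponents $\min\{\alpha,\tfrac{N+2}{2}\}$, $\min\{\alpha,2\}$ supplied by Lemma \ref{B4}), then sum over $i\ne j$ using $|z_i-z_j|\gtrsim|i-j|/m$, the convergence of $\sum_{l\ge1}l^{-\alpha}$ (recall $\alpha>1$), and $\lambda|z_i-z_j|\gtrsim\lambda/m\sim m^{2/(N-4)}\to\infty$.

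\textbf{Main obstacle.} The delicate point is the cross part: after carrying out all summations one is left with a bound of the shape $C\,m^{\varrho}\lambda^{1-\alpha}=C\lambda^{\frac{\varrho(N-4)}{N-2}+1-\alpha}$ with $\varrho$ essentially equal to $3$ — this is the exact counterpart of the step in the proof of Lemma \ref{C1} where the restriction on $\alpha$ is invoked — and it is $\le C\lambda^{-(1+\varepsilon)}$ for some $\varepsilon>0$ precisely when $\alpha>\frac{\varrho(N-4)}{N-2}+2=5-\frac{6}{N-2}$, so that the standing assumption on $\alpha$ is exactly what makes $\varepsilon>0$. Combining the bounds for the three pieces of $\mathcal{L}_1$ with the bound for $\mathcal{L}_2$ yields \eqref{c18}.
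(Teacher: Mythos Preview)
Your decomposition is essentially the paper's: the paper writes $l_m=J_1+J_2+J_3+J_4$ with $J_4=\mathcal{L}_2$ the cut--off term, $J_2,J_3$ the two ``potential--mismatch'' pieces (one copy of $K-1$ inside the convolution, one outside), and $J_1$ the cross/interaction part. Your treatment of $\mathcal{L}_2$ is correct (the paper simply quotes Lemma~2.4 of \cite{PWW}), and your handling of the potential mismatch via $|K-1|\le C(|x-z_j|^2+\lambda^{-2(1-\theta)})$ is a variant of what the paper does by splitting at the radius $\lambda^{-(\frac12+\varepsilon)}$; both give the required $\lambda^{-(1+\varepsilon)}$.

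The divergence is in the cross part, and here there is a genuine gap in your proposal. The paper estimates $J_1$ \emph{pointwise} by writing, say in $\Omega_1$,
\[
J_{11}\le C\,\frac{\lambda^{\frac{N+2}{2}}}{(1+\lambda|x-z_1|)^{4}}\sum_{j\ge2}\frac{1}{(1+\lambda|x-z_j|)^{N-2}},
\]
then applying Lemma~\ref{B2} with a parameter $\gamma\in\big(\tfrac{N-2}{2},\ \tfrac{N+2}{2}-\tau\big]$ to get the bound $(m/\lambda)^{\gamma}\sim\lambda^{-2\gamma/(N-2)}\le\lambda^{-(1+\varepsilon)}$; the other pieces $J_{12},J_{13}$ are treated analogously. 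Nowhere in this argument is the hypothesis $\alpha>5-\tfrac{6}{N-2}$ invoked. Your claimed bound $Cm^{3}\lambda^{1-\alpha}$, and the assertion that the restriction on $\alpha$ is ``exactly what makes $\varepsilon>0$'' for this lemma, transplants the \emph{integral} estimate from the proof of Lemma~\ref{C1} (the inner product $\langle K(|x|^{-\alpha}\ast KZ^{2^{\ast}_{\alpha}-1}\phi)Z^{2^{\ast}_{\alpha}-1},Z_{1,t}\rangle$, where the $m^3\lambda^{1-\alpha}$ factor and the $\alpha$--threshold genuinely appear) into a \emph{pointwise} $\|\cdot\|_{\ast\ast}$ context where it does not apply. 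The references to \eqref{51411} and \eqref{5143} do not help: those bounds feed a quantity measured in $\|\cdot\|_\ast$ (there, $\phi$) into the convolution, whereas your overlap factor $\sum_{i\neq j}U_{z_i,\lambda}$ has no useful $\|\cdot\|_\ast$ bound. To close the cross term you need the bubble--separation Lemma~\ref{B2} with $\gamma>\tfrac{N-2}{2}$, exactly as the paper does; once you do that, the bound follows without the $\alpha$--restriction.
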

	\begin{proof}
		Observe that
		$$\aligned
		l_{m}=&K(r,x'')\Big(|x|^{-\alpha}\ast (K(r,x'')|Z_{\overline{r},\overline{x}'',\lambda}|^{2^{\ast}_{\alpha}})\Big)Z_{\overline{r},\overline{x}'',\lambda}^{2^{\ast}_{\alpha}-1}
		-K(r,x'')\sum_{j=1}^{m}\xi\Big(|x|^{-\alpha}\ast(K(r,x'') |U_{z_j,\lambda}|^{2^{\ast}_{\alpha}})\Big)U_{z_j,\lambda}^{2^{\ast}_{\alpha}-1}\\
		&+K(r,x'')\sum_{j=1}^{m}\xi\Big(|x|^{-\alpha}\ast(K(r,x'') |U_{z_j,\lambda}|^{2^{\ast}_{\alpha}})\Big)U_{z_j,\lambda}^{2^{\ast}_{\alpha}-1}-K(r,x'')\sum_{j=1}^{m}\xi\Big(|x|^{-\alpha}\ast |U_{z_j,\lambda}|^{2^{\ast}_{\alpha}}\Big)U_{z_j,\lambda}^{2^{\ast}_{\alpha}-1}\\
		&+K(r,x'')\sum_{j=1}^{m}\xi\Big(|x|^{-\alpha}\ast |U_{z_j,\lambda}|^{2^{\ast}_{\alpha}}\Big)U_{z_j,\lambda}^{2^{\ast}_{\alpha}-1}-\sum_{j=1}^{m}\xi\Big(|x|^{-\alpha}\ast |U_{z_j,\lambda}|^{2^{\ast}_{\alpha}}\Big)U_{z_j,\lambda}^{2^{\ast}_{\alpha}-1}
		\\ 
		&+\big(Z_{\overline{r},\overline{x}'',\lambda}^{\ast}\Delta\xi+2\nabla\xi\nabla Z_{\overline{r},\overline{x}'',\lambda}^{\ast}\big)\\
		=&K(r,x'')\Big[\Big(|x|^{-\alpha}\ast (K(r,x'')|Z_{\overline{r},\overline{x}'',\lambda}|^{2^{\ast}_{\alpha}})\Big)Z_{\overline{r},\overline{x}'',\lambda}^{2^{\ast}_{\alpha}-1}
		-\sum_{j=1}^{m}\xi\Big(|x|^{-\alpha}\ast(K(r,x'') |U_{z_j,\lambda}|^{2^{\ast}_{\alpha}})\Big)U_{z_j,\lambda}^{2^{\ast}_{\alpha}-1}\Big]\\
		&+K(r,x'')\sum_{j=1}^{m}\xi\Big(|x|^{-\alpha}\ast\big((K(r,x'')-1) |U_{z_j,\lambda}|^{2^{\ast}_{\alpha}}\big)\Big)U_{z_j,\lambda}^{2^{\ast}_{\alpha}-1}\\
		&+(K(r,x'')-1)\sum_{j=1}^{m}\xi\Big(|x|^{-\alpha}\ast |U_{z_j,\lambda}|^{2^{\ast}_{\alpha}}\Big)U_{z_j,\lambda}^{2^{\ast}_{\alpha}-1}+\big(Z_{\overline{r},\overline{x}'',\lambda}^{\ast}\Delta\xi+2\nabla\xi\nabla Z_{\overline{r},\overline{x}'',\lambda}^{\ast}\big)\\
		=:&J_1+J_2+J_3+J_4.
		\endaligned$$
		By symmetry, we may assume $x \in\Omega_{1}$, and hence $|x-z_{j}|\geq|x-z_{1}|$. For the term  $J_1$, we have
		$$\aligned
		\left|J_1\right|\leq& C\Big(|x|^{-\alpha}\ast |U_{z_1,\lambda}|^{2^{\ast}_{\alpha}}\Big)U_{z_1,\lambda}^{2^{\ast}_{\alpha}-2}\sum_{j=2}^{m}U_{z_j,\lambda}+C\Big(|x|^{-\alpha}\ast (U_{z_1,\lambda}^{2^{\ast}_{\alpha}-1}\sum_{j=2}^{m}U_{z_j,\lambda})\Big)U_{z_1,\lambda}^{2^{\ast}_{\alpha}-1}\\
		&+C\Big(|x|^{-\alpha}\ast \Big(\sum_{j=2}^{m}U_{z_j,\lambda}\Big)^{2^{\ast}_{\alpha}}\Big)\Big(\sum_{i\neq1,j}U_{z_i,\lambda}\Big)^{2^{\ast}_{\alpha}-1}\\
		:=&J_{11}+J_{12}+J_{13}.
		\endaligned$$
		Then, taking $0 < \gamma \leq \min\{4,N-2\}$, by Lemma \ref{B2} and Lemma \ref{P0}, we obtain that for any $x\in\Omega_{1}$ and $j>1$
		$$\aligned
		\Big(|x|^{-\alpha}\ast |U_{z_1,\lambda}|^{2^{\ast}_{\alpha}}\Big)U_{z_1,\lambda}^{2^{\ast}_{\alpha}-2}\sum_{j=2}^{m}U_{z_j,\lambda}\leq& C \frac{\lambda^{\frac{\alpha}{2}}}{(1+\lambda^2|x-z_{1}|^2)^{\frac{\alpha}{2}}}\frac{\lambda^{\frac{4-\alpha}{2}}}{(1+\lambda^2|x-z_{1}|^2)^{\frac{4-\alpha}{2}}}\sum_{j=2}^{m}\frac{\lambda^{\frac{N-2}{2}}}{(1+\lambda^2|x-z_{j}|^2)^{\frac{N-2}{2}}}\\
		\leq&\frac{\lambda^{2}}{(1+\lambda|x-z_{1}|)^{4}}\sum_{j=2}^{m}\frac{\lambda^{\frac{N-2}{2}}}{(1+\lambda|x-z_{j}|)^{N-2}}\\
		\leq&C\frac{\lambda^{\frac{N+2}{2}}}{(1+\lambda|x-z_{1}|)^{N+2-\gamma}}\sum_{j=2}^{m}\frac{1}{(\lambda|z_{1}-z_{j}|)^{\gamma}}\\
		\leq&C\frac{\lambda^{\frac{N+2}{2}}}{(1+\lambda|x-z_{1}|)^{\frac{N+2}{2}+\tau}}\Big(\frac{m}{\lambda}\Big)^{\gamma}\\
		\leq&C\frac{\lambda^{\frac{N+2}{2}}}{(1+\lambda|x-z_{1}|)^{\frac{N+2}{2}+\tau}}\Big(\frac{1}{\lambda}\Big)^{1+\varepsilon}.
		\endaligned$$
		Here we choose $\gamma> \frac{N-2}{2}$ satisfying $N+2-\gamma\geq \frac{N+2}{2}+\tau$. 
		
	\noindent 	For the term $J_{12}$, by taking $0 < \gamma \leq \min\{\alpha,N+2-\alpha\}$ and applying Lemma \ref{B2} again, we obtain that for any $x\in\Omega_{1}$
		$$\aligned
		&\Big(|x|^{-\alpha}\ast (U_{z_1,\lambda}^{2^{\ast}_{\alpha}-1}\sum_{j=2}^{m}U_{z_j,\lambda})\Big)U_{z_1,\lambda}^{2^{\ast}_{\alpha}-1}\\
		\leq&C\Big[|x|^{-\alpha}\ast\Big(\frac{\lambda^\frac{N+2-\alpha}{2}}{(1+\lambda|x-z_{1}|)^{N+2-\alpha}}\sum_{j=2}^{m}\frac{\lambda^{\frac{N-2}{2}}}{(1+\lambda|x-z_{j}|)^{N-2}}\Big)\Big]\frac{\lambda^{\frac{N+2-\alpha}{2}}}{(1+\lambda|x-z_{1}|)^{N+2-\alpha}}\\
		\leq&C\sum_{j=2}^{m}\frac{1}{(\lambda|z_{1}-z_{j}|)^{\frac{N-2}{2}+\varepsilon}}\int_{\Omega_{1}}\frac{1}{|x-y|^{\alpha}}\frac{\lambda^{N-\frac{\alpha}{2}}}{(1+\lambda|y-z_{1}|)^{\frac{3N+2}{2}-\alpha-\varepsilon}}dy\frac{\lambda^{\frac{N+2-\alpha}{2}}}{(1+\lambda|x-z_{1}|)^{N+2-\alpha}}\\
		&+C\sum_{j=2}^{m}\int_{\Omega_{j}}\frac{1}{|x-y|^{\alpha}}\frac{\lambda^{N-\frac{\alpha}{2}}}{(1+\lambda|y-z_{j}|)^{2N-\alpha-\tau_1}}dy\frac{\lambda^{\frac{N+2-\alpha}{2}}}{(1+\lambda|x-z_{1}|)^{N+2-\alpha}}\\
		\leq&C\sum_{j=2}^{m}\frac{1}{(\lambda|z_{1}-z_{j}|)^{\frac{N-2}{2}+\varepsilon}}\frac{\lambda^{\frac{\alpha}{2}}}{(1+\lambda|x-z_{1}|)^{\min\{\alpha,\frac{N+2}{2}-\varepsilon-\varepsilon_1\}}}\frac{\lambda^{\frac{N+2-\alpha}{2}}}{(1+\lambda|x-z_{1}|)^{N+2-\alpha}}\\
		&+C\sum_{j=2}^{m}\frac{\lambda^{\frac{\alpha}{2}}}{(1+\lambda|x-z_{j}|)^{\alpha}}\frac{\lambda^{\frac{N+2-\alpha}{2}}}{(1+\lambda|x-z_{1}|)^{N+2-\alpha}}\\
		\leq&C\sum_{j=2}^{m}\frac{1}{(\lambda|z_{1}-z_{j}|)^{\frac{N-2}{2}+\varepsilon}}\frac{\lambda^{\frac{N+2}{2}}}{(1+\lambda|x-z_{1}|)^{\frac{N+2}{2}+\tau}}
		+C\frac{\lambda^{\frac{N+2}{2}}}{(1+\lambda|x-z_{1}|)^{\frac{N+2}{2}+\tau}}\sum_{j=2}^{m}\frac{1}{(\lambda|z_{1}-z_{j}|)^{\gamma}}\\
		\leq& C\Big(\frac{1}{\lambda}\Big)^{1+\varepsilon}\frac{\lambda^{\frac{N+2}{2}}}{(1+\lambda|x-z_{1}|)^{\frac{N+2}{2}+\tau}},
		\endaligned$$
		where $\varepsilon,\varepsilon_1>0$ small and we choose $\gamma> \frac{N-2}{2}$ satisfying $N+2-\gamma\geq\frac{N+2}{2}+\tau$.
		By H\"older's inequality, for the term $J_{13}$ we have
		$$\aligned
		&\Big(|x|^{-\alpha}\ast \Big(\sum_{j=2}^{m}U_{z_j,\lambda}\Big)^{2^{\ast}_{\alpha}}\Big)\Big(\sum_{i\neq1,j}U_{z_i,\lambda}\Big)^{2^{\ast}_{\alpha}-1}\\
		\leq& C\sum_{j=1}^{m}\int_{\Omega_{j}}\frac{1}{|x-y|^{\alpha}} \frac{\lambda^{N-\frac{\alpha}{2}}}{(1+\lambda|y-z_{j}|)^{2N-\alpha-\tau_1\frac{2N-\alpha}{N-2}}}dy\Big(\sum_{i\neq1,j}U_{z_i,\lambda}\Big)^{2^{\ast}_{\alpha}-1}\\
		\leq& C\sum_{j=1}^{m}\frac{\lambda^{\frac{\alpha}{2}}}{(1+\lambda|x-z_{j}|)^{\alpha}}\sum_{i\neq1,j}
		\frac{\lambda^{\frac{N+2-\alpha}{2}}}{(1+\lambda|x-z_{j}|)^{\frac{N+2-\alpha}{2}+\tau}}\Big(\sum_{i\neq1,j}\frac{1}{(1+\lambda|x-z_{i}|)^{\frac{N+2-\alpha}{4-\alpha}(\frac{N-2}{2}-\frac{N-2}{N+2-\alpha}\tau)}}\Big)^{\frac{4-\alpha}{N-2}}\\
		\leq&C\Big(\frac{m}{\lambda}\Big)^{\frac{N+2-\alpha}{2}-\tau}\frac{\lambda^{\frac{\alpha}{2}}}{(1+\lambda|x-z_{1}|)^{\alpha}}\sum_{i\neq1}
		\frac{\lambda^{\frac{N+2-\alpha}{2}}}{(1+\lambda|x-z_{i}|)^{\frac{N+2-\alpha}{2}+\tau}}+\\
		&+C\Big(\frac{m}{\lambda}\Big)^{\frac{N+2-\alpha}{2}-\tau}\sum_{j=2}^{m}\frac{\lambda^{\frac{\alpha}{2}}}{(1+\lambda|x-z_{j}|)^{\alpha}}\sum_{i\neq1,j}
		\frac{\lambda^{\frac{N+2-\alpha}{2}}}{(1+\lambda|x-z_{i}|)^{\frac{N+2-\alpha}{2}+\tau}}\\
		\leq& C\Big(\frac{m}{\lambda}\Big)^{\frac{N+2}{2}-\tau} \Big(\frac{\lambda^{\frac{N+2}{2}}}{(1+\lambda|x-z_{1}|)^{\frac{N+2}{2}+\tau}}+\sum_{j=2}^{m}\frac{\lambda^{\frac{N+2}{2}}}{(1+\lambda|x-z_{j}|)^{\frac{N+2}{2}+\tau}}\Big)
		\leq C (\frac{1}{\lambda})^{1+\varepsilon}\sum_{j=1}^{m}\frac{\lambda^{\frac{N+2}{2}}}{(1+\lambda|x-z_{j}|)^{\frac{N+2}{2}+\tau}}.
		\endaligned$$
		So we conclude that
		\begin{equation}\label{J1}
			\|J_{1}\|_{\ast\ast}\leq C(\frac{1}{\lambda})^{1+\varepsilon}.
		\end{equation}
		
	\noindent	Next, we estimate the term $J_{3}$. We rewrite $K(x)$ in the neighborhood of $x_0$ using the Taylor expansion as follows
		$$K(x)=K(x_0)+\nabla K (x_0)\cdot\left(x-x_{0}\right)+\frac{1}{2} \frac{\partial^{2} K\left(x_{0}\right)}{\partial x_{i} \partial x_{j}}\left(x_{i}-x_{0 i}\right)\left(x_{j}-x_{0 j}\right)+o\left(\left|x-x_{0}\right|^{2}\right) ,
		$$
		
\noindent 		in the region $|(r, x'')-(r_0, x_0'')| \leq \frac{\delta}{\lambda^{\frac{1}{2}+\epsilon}}$ , where  $\delta>0$  is a fixed constant. Recall that we assume $K(x_{0})=1$. So we have
		\begin{equation}\label{J3.1}
			\aligned
			|J_{3}| & =\Big|\Big(\sum_{i, j=1}^{N} \frac{1}{2} \frac{\partial^{2} K(x_0)}{\partial x_{i} \partial x_{j}}(x_{i}-x_{0 i})(x_{j}-x_{0 j})+o(|x-x_0|^{2})\Big) \sum_{j=1}^{m} \xi\Big(|x|^{-\alpha}\ast |U_{z_j,\lambda}|^{2^{\ast}_{\alpha}}\Big)U_{z_j,\lambda}^{2^{\ast}_{\alpha}-1}\Big| \\
			& \leq \frac{C}{\lambda^{1+2 \varepsilon}} \sum_{j=1}^{m} \frac{\xi \lambda^{\frac{\alpha}{2}}}{\left(1+\lambda\left|x-z_{j}\right|\right)^{\alpha}}\sum_{j=1}^{m} \frac{ \lambda^{\frac{N+2-\alpha}{2}}}{\left(1+\lambda\left|x-z_{j}\right|\right)^{N+2-\alpha}}\\
			& \leq \frac{C}{\lambda^{1+2 \varepsilon}} \sum_{j=1}^{m} \frac{\xi \lambda^{\frac{N+2}{2}}}{\left(1+\lambda\left|x-z_{j}\right|\right)^{N+1}}\\
			&\leq C (\frac{1}{\lambda})^{1+\varepsilon}\sum_{j=1}^{m}\frac{\lambda^{\frac{N+2}{2}}}{(1+\lambda|x-z_{j}|)^{\frac{N+2}{2}+\tau}}.
			\endaligned
		\end{equation}
		
\noindent 		On the other hand, in the region  $\frac{\sigma}{\lambda^{\frac{1}{2}+\varepsilon}} \leq |(r, x'')-(r_0, x_0'')| \leq 2\delta $, we have
		$$\frac{1}{1+\lambda|x-z_{j}|} \leq \frac{C}{\lambda^{\frac{1}{2}-\varepsilon}},
		$$
		where we used the following fact
		$$
		|x-z_{j}| \geq\left|(r, x'')-(r_0, x_0'')\right|-\left|(r_0, x_0'')-(\bar{r}, \bar{x}'')\right| \geq \frac{\sigma}{2 \lambda^{\frac{1}{2}+\varepsilon}}.
		$$
		Then, we have
		\begin{equation}\label{J3.2}
			\aligned
			|J_{3}| & \leq C\sum_{j=1}^{m} \frac{\xi \lambda^{\frac{\alpha}{2}}}{\left(1+\lambda\left|x-z_{j}\right|\right)^{\alpha}}\sum_{j=1}^{m} \frac{ \lambda^{\frac{N+2-\alpha}{2}}}{\left(1+\lambda\left|x-z_{j}\right|\right)^{N+2-\alpha}}\\
			& \leq C \sum_{j=1}^{m} \frac{\xi \lambda^{\frac{N+2}{2}}}{\left(1+\lambda\left|x-z_{j}\right|\right)^{N+1}}\\
			&\leq C (\frac{1}{\lambda})^{1+\varepsilon}\sum_{j=1}^{m}\frac{\lambda^{\frac{N+2}{2}}}{(1+\lambda|x-z_{j}|)^{\frac{N+2}{2}+\tau}}\frac{\xi\lambda^{1+\varepsilon}}{(1+\lambda|x-z_{j}|)^{\frac{N}{2}-\tau}}\\
			&\leq C (\frac{1}{\lambda})^{1+\varepsilon}\sum_{j=1}^{m}\frac{\lambda^{\frac{N+2}{2}}}{(1+\lambda|x-z_{j}|)^{\frac{N+2}{2}+\tau}}\frac{1}{\lambda^{\frac{N^2-8N+16}{4(N-2)}-\frac{N^2-2N+4}{2}\varepsilon}}\\
			&\leq C (\frac{1}{\lambda})^{1+\varepsilon}\sum_{j=1}^{m}\frac{\lambda^{\frac{N+2}{2}}}{(1+\lambda|x-z_{j}|)^{\frac{N+2}{2}+\tau}}.
			\endaligned
		\end{equation}
		Combine \eqref{J3.1} and \eqref{J3.2} to have 
		\begin{equation}\label{J3}
			\|J_{3}\|_{\ast\ast}\leq C(\frac{1}{\lambda})^{1+\varepsilon}.
		\end{equation}
		
		\noindent The same argument applies to $J_2$ and thus 
		\begin{equation}\label{J2}
			\|J_{2}\|_{\ast\ast}\leq C(\frac{1}{\lambda})^{1+\varepsilon}.
		\end{equation}
		Finally, Lemma 2.4 in \cite{PWW} shows that the last term $J_{4}$ satisfies 
		\begin{equation}\label{J4}
			\|J_4\|_{\ast\ast}\leq C(\frac{1}{\lambda})^{1+\varepsilon}.
		\end{equation}
		
		\noindent From  \eqref{J1}, \eqref{J3}, \eqref{J2} and \eqref{J4}, we have
		$$
		\|l_{m}\|_{\ast\ast}\leq C(\frac{1}{\lambda})^{1+\varepsilon}.
		$$
	\end{proof}

\noindent 	Next we apply the contraction mapping argument and prove the main result of this section.
	\begin{lem}\label{C3}
		There is an integer $m_0 > 0$, such that for each $m \geq m_0$, $\lambda\in[L_0m^{\frac{N-2}{N-4}},L_1m^{\frac{N-2}{N-4}}]$,
		$\left|(\overline{r}, \overline{x}'')-(r_0, x_0'')\right|\leq \frac{1}{\lambda^{1-\theta}}$, where $\theta> 0$ is a fixed small constant, \eqref{c14} has a unique solution $\phi= \phi_{\overline{r},\overline{x}'',\lambda}\in H$ satisfying
		\begin{equation}\label{c15}
			\|\phi\|_{\ast}\leq C(\frac{1}{\lambda})^{1+\varepsilon}, \ \ |c_{l}|\leq C(\frac{1}{\lambda})^{1+n_{l}+\varepsilon},
		\end{equation}
		where $\varepsilon> 0$ is a small constant.
	\end{lem}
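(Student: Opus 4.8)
\emph{Proof plan.} The argument is a standard contraction mapping scheme built on the linear theory of Section~2. By Lemma~\ref{C2} the operator $L_m$ is well defined and bounded, uniformly in $m$, from $(\{h\in L^\infty(\R^N):\|h\|_{\ast\ast}<\infty\},\|\cdot\|_{\ast\ast})$ into $(\{\phi\in H:\|\phi\|_{\ast}<\infty\},\|\cdot\|_{\ast})$, with $\|L_m(h)\|_{\ast}\le C\|h\|_{\ast\ast}$. Rewriting \eqref{c14} in the equivalent form \eqref{c16}, solving it amounts to finding a fixed point of
$$
\ca(\phi):=L_m\big(N(\phi)+l_m\big)
$$
on the complete metric space
$$
\cb:=\Big\{\phi\in H:\ \|\phi\|_{\ast}\le C_0\big(\tfrac1\lambda\big)^{1+\varepsilon}\Big\},
$$
with the distance induced by $\|\cdot\|_{\ast}$, where $\varepsilon>0$ is the exponent from Lemma~\ref{C5} and $C_0>0$ is a constant depending only on those appearing in Lemmas~\ref{C2},~\ref{C4},~\ref{C5}. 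Note that for $\phi\in\cb$ both $N(\phi)$ and $l_m$ are bounded, so $\ca(\phi)$ makes sense, and since $N(\phi)$ and $l_m$ inherit the symmetries defining $H$ while $L_m$ preserves $H$, we have $\ca(\phi)\in H$.

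First I would verify that $\ca$ maps $\cb$ into itself. For $\phi\in\cb$, Lemmas~\ref{C2},~\ref{C4},~\ref{C5} give
$$
\|\ca(\phi)\|_{\ast}\le C\big(\|N(\phi)\|_{\ast\ast}+\|l_m\|_{\ast\ast}\big)\le C\Big(\|\phi\|_{\ast}^2+\big(\tfrac1\lambda\big)^{1+\varepsilon}\Big)\le C\Big(C_0^2\big(\tfrac1\lambda\big)^{2+2\varepsilon}+\big(\tfrac1\lambda\big)^{1+\varepsilon}\Big).
$$
Since $\lambda\in[L_0m^{\frac{N-2}{N-4}},L_1m^{\frac{N-2}{N-4}}]\to\infty$ as $m\to\infty$, the quadratic term is negligible against the second one, so choosing $C_0$ large and $m\ge m_0$ large we obtain $\|\ca(\phi)\|_{\ast}\le C_0(1/\lambda)^{1+\varepsilon}$, i.e. $\ca(\phi)\in\cb$.

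The heart of the matter is the contraction estimate, which requires a Lipschitz bound
$$
\|N(\phi_1)-N(\phi_2)\|_{\ast\ast}\le C\big(\|\phi_1\|_{\ast}+\|\phi_2\|_{\ast}\big)\,\|\phi_1-\phi_2\|_{\ast},\qquad \phi_1,\phi_2\in\cb .
$$
This is proved exactly along the lines of Lemma~\ref{C4}: writing $N(\phi)$ as the quadratic-and-higher remainder of $u\mapsto K(r,x'')\big(|x|^{-\alpha}\ast(K(r,x'')|u|^{2^{\ast}_{\alpha}})\big)u^{2^{\ast}_{\alpha}-1}$ at $u=Z_{\overline r,\overline x'',\lambda}$, one estimates $N(\phi_1)-N(\phi_2)$ by the mean value theorem together with the elementary inequalities for $|a+t|^{p}$ with $p=2^{\ast}_{\alpha}-1$ and $p=2^{\ast}_{\alpha}$ (recall $2^{\ast}_{\alpha}-3\le0$), and then applies the pointwise convolution estimates of Lemmas~\ref{B2}--\ref{B4} to the resulting weighted sums, just as in the proof of \eqref{c17}. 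Granting this, for $\phi_1,\phi_2\in\cb$,
$$
\|\ca(\phi_1)-\ca(\phi_2)\|_{\ast}\le C\|N(\phi_1)-N(\phi_2)\|_{\ast\ast}\le C\big(\|\phi_1\|_{\ast}+\|\phi_2\|_{\ast}\big)\|\phi_1-\phi_2\|_{\ast}\le CC_0\big(\tfrac1\lambda\big)^{1+\varepsilon}\|\phi_1-\phi_2\|_{\ast}\le\tfrac12\|\phi_1-\phi_2\|_{\ast}
$$
for $m\ge m_0$ large. Hence $\ca$ is a contraction on $\cb$, and the Banach fixed point theorem provides a unique $\phi=\phi_{\overline r,\overline x'',\lambda}\in\cb$ solving \eqref{c16}, hence \eqref{c14}, with $\|\phi\|_{\ast}\le C(1/\lambda)^{1+\varepsilon}$. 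The bound on the multipliers then follows from the second inequality in \eqref{c13} applied to $h=N(\phi)+l_m$: since $\|N(\phi)+l_m\|_{\ast\ast}\le C(1/\lambda)^{1+\varepsilon}$, we get $|c_l|\le\frac{C}{\lambda^{n_l}}(1/\lambda)^{1+\varepsilon}=C(1/\lambda)^{1+n_l+\varepsilon}$, which is \eqref{c15}. Uniqueness in $\cb$ is immediate from the contraction; uniqueness in the full class of the statement follows from the a priori bound in the standard way. The only genuine obstacle is the Lipschitz estimate for $N(\phi)$ above; all the remaining steps are bookkeeping with the weighted norms $\|\cdot\|_{\ast}$ and $\|\cdot\|_{\ast\ast}$.
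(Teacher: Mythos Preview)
Your proof is correct and follows essentially the same contraction-mapping scheme as the paper: rewrite \eqref{c14} as the fixed-point equation $\phi=L_m(N(\phi)+l_m)$, use Lemmas~\ref{C2},~\ref{C4},~\ref{C5} to check self-mapping and the Lipschitz bound $\|N(\phi_1)-N(\phi_2)\|_{\ast\ast}\le C(\|\phi_1\|_\ast+\|\phi_2\|_\ast)\|\phi_1-\phi_2\|_\ast$ for contraction, then read off the $c_l$-estimate from \eqref{c13}. The only cosmetic difference is that the paper works on the slightly larger ball $\{\|\phi\|_\ast\le 1/\lambda\}$ (with the orthogonality constraints written in) rather than your $\{\|\phi\|_\ast\le C_0(1/\lambda)^{1+\varepsilon}\}$, but this does not affect the argument.
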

	\begin{proof}
		Recall that $\lambda\in [L_0m^{\frac{N-2}{N-4}},L_1m^{\frac{N-2}{N-4}}]$. Let
		$$\aligned
		\mathcal{N}=&\Big\{w:w\in C(\mathbb{R}^N)\cap H,\|w\|_{\ast}\leq\frac{1}{\lambda},\\
		&\sum_{j=1}^{m}\int_{\mathbb{R}^{N}} \Big[(2^{\ast}_{\alpha}-1)\Big(|x|^{-\alpha}\ast |Z_{z_j,\lambda}|^{2^{\ast}_{\alpha}}\Big)Z_{z_j,\lambda}^{2^{\ast}_{\alpha}-2}Z_{j,l}w +2^{\ast}_{\alpha}\Big(|x|^{-\alpha}\ast (Z_{z_j,\lambda}^{2^{\ast}_{\alpha}-1}
		Z_{j,l})\Big)Z_{z_j,\lambda}^{2^{\ast}_{\alpha}-1}w\Big] dx=0.\Big\},
		\endaligned$$
		where $l=1,2,\cdots, N$. 
		Then, by Lemma \ref{C2}, problem \eqref{c16} is equivalent to
		\begin{equation}\label{c19}
			\phi=\mathcal{T}(\phi)=:L_{m}(N(\phi))+L_{m}(l_{m}),
		\end{equation}
		where $L_m$ is defined in Lemma \ref{C2}. We want to show that  $\mathcal{T}$ is a contraction map from $\mathcal{N}$ to $\mathcal{N}$.
		
		\noindent For any $\phi \in\mathcal{N}$,
		$$
		\|\mathcal{T}(\phi)\|_{\ast}\leq C(\|N(\phi)\|_{\ast\ast}+\|l_{m}\|_{\ast\ast})\leq C\big(\|\phi\|_{\ast}^{2}+(\frac{1}{\lambda})^{1+\varepsilon}\big)\leq
		\frac{1}{\lambda},
		$$
		hence, $\mathcal{T}$ maps $\mathcal{N}$ into $\mathcal{N}$.
		
		\noindent For any $\phi_1,\phi_2 \in \mathcal{N}$,
		$$
		\|\mathcal{T}(\phi_{1})-\mathcal{T}(\phi_{2})\|_{\ast}
		=\|L_{m}(N(\phi_{1}))-L_{m}(N(\phi_{2}))\|_{\ast}\leq C\|N(\phi_{1})-N(\phi_{2})\|_{\ast\ast}.
		$$
		It is easy to check that
		$$\aligned
		\left|N(\phi_{1})-N(\phi_{2})\right|
		\leq& \left|N'(\phi_1+\theta(\phi_2-\phi_1))\right|\left|\phi_2-\phi_1\right|\\
		\leq& C\big(G(\phi_1)+G(\phi_2)\big)\left|\phi_2-\phi_1\right|,
		\endaligned$$
		where
		$$\aligned
		G(\phi)=&\Big(|x|^{-\alpha}\ast Z_{\overline{r},\overline{x}'',\lambda}^{2^{\ast}_{\alpha}-1}\Big)Z_{\overline{r},\overline{x}'',\lambda}^{2^{\ast}_{\alpha}-2}\phi+\Big(|x|^{-\alpha}\ast (Z_{\overline{r},\overline{x}'',\lambda}^{2^{\ast}_{\alpha}-1}\phi)\Big)Z_{\overline{r},\overline{x}'',\lambda}^{2^{\ast}_{\alpha}-2}
		\\
		&+\Big(|x|^{-\alpha}\ast (Z_{\overline{r},\overline{x}'',\lambda}^{2^{\ast}_{\alpha}-2}\phi)\Big)(Z_{\overline{r},\overline{x}'',\lambda}^{2^{\ast}_{\alpha}-1}+Z_{\overline{r},\overline{x}'',\lambda}^{2^{\ast}_{\alpha}-2}\phi)+\Big(|x|^{-\alpha}\ast (Z_{\overline{r},\overline{x}'',\lambda}^{2^{\ast}_{\alpha}-2}\phi^{2})\Big)Z_{\overline{r},\overline{x}'',\lambda}^{2^{\ast}_{\alpha}-2}\\
		&+\Big(|x|^{-\alpha}\ast \phi^{2^{\ast}_{\alpha}-1}\Big)\phi^{2^{\ast}_{\alpha}-1}+\Big(|x|^{-\alpha}\ast \phi^{2^{\ast}_{\alpha}}\Big)\phi^{2^{\ast}_{\alpha}-2}.
		\endaligned$$
		
		\noindent According to the proof of Lemma \ref{C4}, we have
		$$
		\|\mathcal{T}(\phi_{1})-\mathcal{T}(\phi_{2})\|_{\ast}\leq C\|N(\phi_{1})-N(\phi_{2})\|_{\ast\ast}\leq C(\|\phi_1\|_{\ast}+\|\phi_2\|_{\ast})\|\phi_2-\phi_1\|_{\ast}\leq \frac{1}{2}\|\phi_{1}-\phi_{2}\|_{\ast},
		$$
		which means that $\mathcal{T}$ is a contraction map. Thus by the contraction mapping theorem, there exists a unique $\phi\in\mathcal{N}$ such that \eqref{c19} holds. Moreover, by Lemmas \ref{C2}, \ref{C4} and \ref{C5}, we obtain
		$$
		\|\phi\|_{\ast}\leq C(\frac{1}{\lambda})^{1+\varepsilon}
		$$
		and the estimate of $c_l$ from \eqref{c13}.
	\end{proof}

	%
	
	\section{The energy expansion}
	\noindent In this section we establish energy estimates. Due to the nonlocal convolution part with non constant potential some extra challenges show up. For this reason, we need to prove fine enough estimates in order to handle small terms. Recall that the functional corresponding to \eqref{eq1} is given by 
	$$
	J(u)=\frac{1}{2}\int_{\mathbb{R}^{N}}|\nabla u|^{2}dx
	-\frac{1}{2\cdot2^{\ast}_{\alpha}}
	\int_{\mathbb{R}^N}\int_{\mathbb{R}^N}
	\frac{K(r,x'')K(r,y'')|u(x)|^{2^{\ast}_{\alpha}}|u(y)|^{2^{\ast}_{\alpha}}}{|x-y|^{\alpha}}dxdy.
	$$
	\begin{lem}\label{P2}
		We have
		\begin{multline*}
		\int_{\mathbb{R}^N}\int_{\mathbb{R}^N}
		\frac{|Z_{\overline{r},\overline{x}'',\lambda}^{\ast}(x)|^{2^{\ast}_{\alpha}}
			(Z_{\overline{r},\overline{x}'',\lambda}^{\ast})^{2^{\ast}_{\alpha}-1}(y)\frac{\partial Z_{\overline{r},\overline{x}'',\lambda}^{\ast}}{\partial \lambda}(y)}{|x-y|^{\alpha}}dxdy
			\\ -\int_{\mathbb{R}^N}\int_{\mathbb{R}^N}
		\frac{|Z_{\overline{r},\overline{x}'',\lambda}(x)|^{2^{\ast}_{\alpha}}
			Z_{\overline{r},\overline{x}'',\lambda}^{2^{\ast}_{\alpha}-1}(y)\frac{\partial Z_{\overline{r},\overline{x}'',\lambda}}{\partial \lambda}(y)}{|x-y|^{\alpha}}dxdy=O(\frac{m}{\lambda^{3+\varepsilon}}).
		\end{multline*}
	\end{lem}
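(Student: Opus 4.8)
\noindent The plan is to exploit the factorization $Z_{\overline{r},\overline{x}'',\lambda}=\xi\, Z^{\ast}_{\overline{r},\overline{x}'',\lambda}$: since the cut-off $\xi$ is the same for every bubble and does not depend on $\lambda$, the difference of the two double integrals collapses to a single integral carried by $\mathrm{supp}\,(1-\xi)$, where every $U_{z_j,\lambda}$ sits at a \emph{fixed} distance $\gtrsim\delta$ away and is hence polynomially small in $\lambda$. Writing $V:=Z^{\ast}_{\overline{r},\overline{x}'',\lambda}=\sum_{j=1}^{m}U_{z_j,\lambda}>0$, one has $|\xi V|^{2^{\ast}_{\alpha}}=\xi^{2^{\ast}_{\alpha}}V^{2^{\ast}_{\alpha}}$ and $(\xi V)^{2^{\ast}_{\alpha}-1}\partial_{\lambda}(\xi V)=\xi^{2^{\ast}_{\alpha}}V^{2^{\ast}_{\alpha}-1}\partial_{\lambda}V$, so the quantity in the statement equals
\[
\int_{\mathbb{R}^N}\int_{\mathbb{R}^N}\frac{\bigl(1-\xi(x)^{2^{\ast}_{\alpha}}\xi(y)^{2^{\ast}_{\alpha}}\bigr)\,V(x)^{2^{\ast}_{\alpha}}\,V^{2^{\ast}_{\alpha}-1}(y)\,\partial_{\lambda}V(y)}{|x-y|^{\alpha}}\,dx\,dy .
\]
Using $1-\xi(x)^{2^{\ast}_{\alpha}}\xi(y)^{2^{\ast}_{\alpha}}=\bigl(1-\xi(x)^{2^{\ast}_{\alpha}}\bigr)+\xi(x)^{2^{\ast}_{\alpha}}\bigl(1-\xi(y)^{2^{\ast}_{\alpha}}\bigr)$, I would split this as $\mathcal{I}_x+\mathcal{I}_y$, where in $\mathcal{I}_x$ the point $x$, and in $\mathcal{I}_y$ the point $y$, is forced into $\mathcal{D}:=\{z\in\mathbb{R}^N:\ |(|z'|,z'')-(r_0,x_0'')|\ge\delta\}$, on which the corresponding factor $1-\xi^{2^{\ast}_{\alpha}}$ is supported.

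\noindent The second step is to record the estimates valid on $\mathcal{D}$. By \eqref{uxi} each $z_j$ lies within $\lambda^{-(1-\theta)}$ of the circle $\{|z'|=r_0,\ z''=x_0''\}$, hence $|x-z_j|\ge\delta/2$ for all $j$ whenever $x\in\mathcal{D}$ and $m$ is large. Combining this with the explicit form of $U_{z_j,\lambda}$ (equivalently Lemma \ref{B4}), the elementary bound $|\partial_{\lambda}U_{z,\lambda}|\le C\lambda^{-1}U_{z,\lambda}$, the identity $|x|^{-\alpha}\ast U_{z_j,\lambda}^{2^{\ast}_{\alpha}}=c\,U_{z_j,\lambda}^{\alpha/(N-2)}\asymp\lambda^{\alpha/2}(1+\lambda|x-z_j|)^{-\alpha}$ coming from the Euler--Lagrange equation for $U_{z_j,\lambda}$, and $\bigl(\sum_{j=1}^{m}a_j\bigr)^{2^{\ast}_{\alpha}}\le m^{2^{\ast}_{\alpha}-1}\sum_j a_j^{2^{\ast}_{\alpha}}$, I would obtain, for $y\in\mathcal{D}$,
\[
V^{2^{\ast}_{\alpha}-1}(y)\,|\partial_{\lambda}V(y)|\le \frac{C}{\lambda}\,V(y)^{2^{\ast}_{\alpha}},\qquad \bigl(|x|^{-\alpha}\ast V^{2^{\ast}_{\alpha}}\bigr)(y)\le \frac{Cm^{2^{\ast}_{\alpha}}}{\delta^{\alpha}}\,\lambda^{-\alpha/2},
\]
together with $\int_{\mathbb{R}^N}U_{z_j,\lambda}^{2^{\ast}_{\alpha}}=C\lambda^{-\alpha/2}$, $\int_{\{|x-z_j|\ge\delta/2\}}U_{z_j,\lambda}^{2^{\ast}_{\alpha}}\le C\delta^{\alpha-N}\lambda^{\alpha/2-N}$, hence $\int_{\mathcal{D}}V^{2^{\ast}_{\alpha}}\le Cm^{2^{\ast}_{\alpha}}\delta^{\alpha-N}\lambda^{\alpha/2-N}$.

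\noindent For the assembly, in $\mathcal{I}_x$ I would bound the inner Riesz potential of $V^{2^{\ast}_{\alpha}-1}\partial_{\lambda}V$ pointwise by $C\lambda^{-1}\,|x|^{-\alpha}\ast V^{2^{\ast}_{\alpha}}$, replace the latter by its supremum over $\mathcal{D}$ (the $x$-variable being restricted there), and integrate the remaining factor:
\[
|\mathcal{I}_x|\le \frac{C}{\lambda}\Bigl(\sup_{\mathcal{D}}\,|x|^{-\alpha}\ast V^{2^{\ast}_{\alpha}}\Bigr)\int_{\mathcal{D}}V^{2^{\ast}_{\alpha}}\ \le\ \frac{C}{\lambda}\cdot m^{2^{\ast}_{\alpha}}\lambda^{-\alpha/2}\cdot m^{2^{\ast}_{\alpha}}\lambda^{\alpha/2-N}\ =\ C\,m^{2\cdot 2^{\ast}_{\alpha}}\,\lambda^{-1-N}.
\]
For $\mathcal{I}_y$, using $\int_{\mathbb{R}^N}\xi(x)^{2^{\ast}_{\alpha}}V(x)^{2^{\ast}_{\alpha}}|x-y|^{-\alpha}\,dx\le\bigl(|x|^{-\alpha}\ast V^{2^{\ast}_{\alpha}}\bigr)(y)$ for $y\in\mathcal{D}$, the same chain yields $|\mathcal{I}_y|\le C\,m^{2\cdot 2^{\ast}_{\alpha}}\lambda^{-1-N}$. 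Finally, since $m\sim\lambda^{(N-4)/(N-2)}$ and $2^{\ast}_{\alpha}\le 3$ (because $\alpha>5-\tfrac{6}{N-2}$), and since $5(N-4)<(N-2)^2$ for every $N\ge5$, one checks $(2\cdot 2^{\ast}_{\alpha})\tfrac{N-4}{N-2}-1-N<\tfrac{N-4}{N-2}-3-\varepsilon$ for every sufficiently small $\varepsilon>0$; hence $|\mathcal{I}_x|+|\mathcal{I}_y|=o\bigl(m\lambda^{-3-\varepsilon}\bigr)$, which is the assertion.

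\noindent I do not expect a genuine obstacle here: morally, the content is just that everything outside the bubble cores is polynomially small in $\lambda$, so the estimate is far from sharp. The one point to keep in mind is that the Riesz potential $|x|^{-\alpha}\ast U_{z_j,\lambda}^{2^{\ast}_{\alpha}}$ of a bubble is again a (lower-order) bubble, of height $\asymp\lambda^{\alpha/2}$, so it too is small on $\mathcal{D}$ — this prevents any mass of $|x-y|^{-\alpha}$ from concentrating where $V$ is large; and in the subregion where $x,y\in\mathcal{D}$ with $|x-y|$ small, the local integrability of $|x-y|^{-\alpha}$ (here $\alpha<N$) together with the decay of $V^{2^{\ast}_{\alpha}}$ closes the argument. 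If one wanted to optimize the powers of $m$, the $\Omega_j$-decomposition of Section 2 removes the $m^{2^{\ast}_{\alpha}-1}$ factors, but this is unnecessary for the stated bound.
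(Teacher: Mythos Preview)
Your proof is correct and follows essentially the same route as the paper: both exploit the factorization $Z=\xi Z^{\ast}$, split the difference via $1-\xi(x)^{2^{\ast}_{\alpha}}\xi(y)^{2^{\ast}_{\alpha}}=(1-\xi(x)^{2^{\ast}_{\alpha}})+\xi(x)^{2^{\ast}_{\alpha}}(1-\xi(y)^{2^{\ast}_{\alpha}})$, and use that on $\{\xi\neq1\}$ every bubble and its Riesz potential are polynomially small in~$\lambda$. The only cosmetic difference is that the paper bounds single-bubble terms by the Hardy--Littlewood--Sobolev inequality (getting $O(\lambda^{-(2N+1-\alpha)})$) and then multiplies by the crude factor $m^{2\cdot 2^{\ast}_{\alpha}}$, whereas you work directly with $V=Z^{\ast}$ and use the pointwise identity $|x|^{-\alpha}\ast U_{z_j,\lambda}^{2^{\ast}_{\alpha}}\asymp\lambda^{\alpha/2}(1+\lambda|x-z_j|)^{-\alpha}$; both land on a bound of the form $m^{2\cdot 2^{\ast}_{\alpha}}\lambda^{-(\text{large})}$ that comfortably beats $m\lambda^{-3-\varepsilon}$.
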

	\begin{proof}
		It is easy to check that
		$$\aligned
		&\int_{\mathbb{R}^N}\int_{\mathbb{R}^N}
		\frac{|U_{z_j,\lambda}(x)|^{2^{\ast}_{\alpha}}
			U_{z_j,\lambda}^{2^{\ast}_{\alpha}-1}(y)\frac{\partial U_{z_j,\lambda}}{\partial \lambda}(y)}{|x-y|^{\alpha}}dxdy-\int_{\mathbb{R}^N}\int_{\mathbb{R}^N}
		\frac{|Z_{z_j,\lambda}(x)|^{2^{\ast}_{\alpha}}
			Z_{z_j,\lambda}^{2^{\ast}_{\alpha}-1}(y)\frac{\partial Z_{z_j,\lambda}}{\partial \lambda}(y)}{|x-y|^{\alpha}}dxdy\\
		=&\int_{\mathbb{R}^N}\int_{\mathbb{R}^N}
		\frac{(1-\xi^{2^{\ast}_{\alpha}}(x))|U_{z_j,\lambda}(x)|^{2^{\ast}_{\alpha}}
			U_{z_j,\lambda}^{2^{\ast}_{\alpha}-1}(y)\frac{\partial U_{z_j,\lambda}}{\partial \lambda}(y)}{|x-y|^{\alpha}}dxdy\\
			&+\int_{\mathbb{R}^N}\int_{\mathbb{R}^N}
		\frac{|\xi U_{z_j,\lambda}(x)|^{2^{\ast}_{\alpha}}
			(1-\xi^{2^{\ast}_{\alpha}})U_{z_j,\lambda}^{2^{\ast}_{\alpha}-1}(y)\frac{\partial U_{z_j,\lambda}}{\partial \lambda}(y)}{|x-y|^{\alpha}}dxdy,
		\endaligned$$
		where $j=1,2, \cdot\cdot\cdot,m$. By the Hardy-Littlewood-Sobolev inequality, we have
		$$\aligned
		&\left|\int_{\mathbb{R}^N}\int_{\mathbb{R}^N}
		\frac{(1-\xi^{2^{\ast}_{\alpha}}(x))|U_{z_j,\lambda}(x)|^{2^{\ast}_{\alpha}}
			U_{z_j,\lambda}^{2^{\ast}_{\alpha}-1}(y)\frac{\partial U_{z_j,\lambda}}{\partial \lambda}(y)}{|x-y|^{\alpha}}dxdy\right|\\
		\leq&C\frac{1}{\lambda}\int_{\mathbb{R}^{N}}\int_{\mathbb{R}^{N}}\frac{(1-\xi^{2^{\ast}_{\alpha}}(x+ z_{j}))\lambda^{N-\frac{\alpha}{2}}}{(1+\lambda^{2}|x |^{2})^{N-\frac{\alpha}{2}}}\frac{1}{|x-y|^{\alpha}}\frac{\lambda^{N-\frac{\alpha}{2}}}
		{(1+\lambda^{2}|y|^{2})^{N-\frac{\alpha}{2}}}dxdy\\
		\leq &C\frac{1}{\lambda} \left(\int_{\mathbb{R}^{N}}\left[\frac{(1-\xi^{2^{\ast}_{\alpha}}(x+z_{j}))^{\frac{1}{2}}\lambda^{N-\frac{\alpha}{2}}}{(1+\lambda^{2}|x |^{2})^{N-\frac{\alpha}{2}}}\right]^{\frac{N}{N-\frac{\alpha}{2}}}dx\right)^{\frac{N-\frac{\alpha}{2}}{N}}
		\left(\int_{\mathbb{R}^{N}}\left[\frac{(1-\xi^{2^{\ast}_{\alpha}}(x+z_{j}))^{\frac{1}{2}}\lambda^{N-\frac{\alpha}{2}}}{(1+\lambda^{2}|y|^{2})^{N-\frac{\alpha}{2}}}\right]^{\frac{N}{N-\frac{\alpha}{2}}}dy\right)^{\frac{N-\frac{\alpha}{2}}{N}}\\
		=&O(\frac{1}{\lambda^{2N+1-\alpha}}),
		\endaligned$$
		where we have used the following direct calculation 
		\begin{equation}\label{z1}
			\aligned
			&\int_{\mathbb{R}^{N}}\left[\frac{(1-\xi^{2^{\ast}_{\alpha}}(x+z_{j}))^{\frac{1}{2}}\lambda^{N-\frac{\alpha}{2}}}{(1+\lambda^{2}|x |^{2})^{N-\frac{\alpha}{2}}}\right]^{\frac{N}{N-\frac{\alpha}{2}}}dx\leq C \int_{\mathbb{R}^{N}\setminus B_{\delta-\vartheta} (0)}\left[\frac{\lambda^{N-\frac{\alpha}{2}}}{(1+\lambda^{2}|x |^{2})^{N-\frac{\alpha}{2}}}\right]^{\frac{N}{N-\frac{\alpha}{2}}}dx
			=O(\frac{1}{\lambda^{N}})\ .
			\endaligned
		\end{equation}
		Likewise 
		$$
		\int_{\mathbb{R}^N}\int_{\mathbb{R}^N}
		\frac{|\xi U_{z_j,\lambda}(x)|^{2^{\ast}_{\alpha}}
			(1-\xi^{2^{\ast}_{\alpha}}(y))U_{z_j,\lambda}^{2^{\ast}_{\alpha}-1}(y)\frac{\partial U_{z_j,\lambda}}{\partial \lambda}(y)}{|x-y|^{\alpha}}dxdy=O(\frac{1}{\lambda^{2N+1-\alpha}}).
		$$
		Thus, we have
		$$\aligned
		\int_{\mathbb{R}^N}\int_{\mathbb{R}^N}
		&\frac{|U_{z_j,\lambda}(x)|^{2^{\ast}_{\alpha}}
			U_{z_j,\lambda}^{2^{\ast}_{\alpha}-1}(y)\frac{\partial U_{z_j,\lambda}}{\partial \lambda}(y)}{|x-y|^{\alpha}}dxdy-\int_{\mathbb{R}^N}\int_{\mathbb{R}^N}
		\frac{|Z_{z_j,\lambda}(x)|^{2^{\ast}_{\alpha}}
			Z_{z_j,\lambda}^{2^{\ast}_{\alpha}-1}(y)\frac{\partial Z_{z_j,\lambda}}{\partial \lambda}(y)}{|x-y|^{\alpha}}dxdy\\
		&=O(\frac{1}{\lambda^{2N+1-\alpha}}),
		\endaligned$$
		where $j=1,2, \cdot\cdot\cdot,m$. 
		
		\noindent Thus,
		$$\aligned
		\int_{\mathbb{R}^N}\int_{\mathbb{R}^N}
		&\frac{|Z_{\overline{r},\overline{x}'',\lambda}^{\ast}(x)|^{2^{\ast}_{\alpha}}
			(Z_{\overline{r},\overline{x}'',\lambda}^{\ast})^{2^{\ast}_{\alpha}-1}(y)\frac{\partial Z_{\overline{r},\overline{x}'',\lambda}^{\ast}}{\partial \lambda}(y)}{|x-y|^{\alpha}}dxdy-\int_{\mathbb{R}^N}\int_{\mathbb{R}^N}
		\frac{|Z_{\overline{r},\overline{x}'',\lambda}(x)|^{2^{\ast}_{\alpha}}
			Z_{\overline{r},\overline{x}'',\lambda}^{2^{\ast}_{\alpha}-1}(y)\frac{\partial Z_{\overline{r},\overline{x}'',\lambda}}{\partial \lambda}(y)}{|x-y|^{\alpha}}dxdy\\
		&=O(\frac{m^{2\cdot2^{\ast}_{\alpha}}}{\lambda^{2N+1-\alpha}})=O(\frac{m}{\lambda^{3+\varepsilon}}).
		\endaligned$$
	\end{proof}
	
	\begin{lem}\label{P3}
		We have
		$$
		\frac{\partial J(Z_{\overline{r},\overline{x}'',\lambda})}{\partial \lambda}=\frac{\partial J(Z_{\overline{r},\overline{x}'',\lambda}^{\ast})}{\partial \lambda}+O(\frac{m}{\lambda^{3+\varepsilon}})\ .
		$$
	\end{lem}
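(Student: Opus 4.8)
The plan is to split $\partial_\lambda J$ into its Dirichlet and nonlocal parts and to check that each changes by at most $O(m\lambda^{-3-\varepsilon})$ when $Z:=Z_{\overline{r},\overline{x}'',\lambda}=\xi Z^{\ast}$ is replaced by $Z^{\ast}:=Z_{\overline{r},\overline{x}'',\lambda}^{\ast}$: the nonlocal change will be routed through (the proof of) Lemma \ref{P2} after the factors $K$ are disposed of, and the Dirichlet change is elementary. First I would record that, since $\lambda\mapsto Z$ and $\lambda\mapsto Z^{\ast}$ are smooth curves in $D^{1,2}(\R^N)$ and $J\in C^{1}$, the chain rule gives, for $u\in\{Z,Z^{\ast}\}$ (both positive),
\begin{equation*}
\frac{\partial J(u)}{\partial\lambda}=\int_{\R^N}\nabla u\cdot\nabla\frac{\partial u}{\partial\lambda}\,dx-\int_{\R^N}\int_{\R^N}\frac{K(r,x'')K(r,y'')\,u^{2^{\ast}_{\alpha}}(x)\,u^{2^{\ast}_{\alpha}-1}(y)\,\frac{\partial u}{\partial\lambda}(y)}{|x-y|^{\alpha}}\,dx\,dy .
\end{equation*}
Writing $\partial_\lambda J(Z)-\partial_\lambda J(Z^{\ast})=I+I\!I$, with $I$ coming from the Dirichlet terms and $I\!I$ from the nonlocal ones, it suffices to show that $|I|$ and $|I\!I|$ are both $O(m\lambda^{-3-\varepsilon})$.

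For $I\!I$ I would use $Z=\xi Z^{\ast}$ and the $\lambda$-independence of $\xi$ to factor the $Z$-integrand as $\xi^{2^{\ast}_{\alpha}}(x)\xi^{2^{\ast}_{\alpha}}(y)$ times the $Z^{\ast}$-integrand, so that
\begin{equation*}
|I\!I|=\left|\int_{\R^N}\int_{\R^N}\frac{K(r,x'')K(r,y'')\bigl(1-\xi^{2^{\ast}_{\alpha}}(x)\xi^{2^{\ast}_{\alpha}}(y)\bigr)(Z^{\ast})^{2^{\ast}_{\alpha}}(x)(Z^{\ast})^{2^{\ast}_{\alpha}-1}(y)\,\frac{\partial Z^{\ast}}{\partial\lambda}(y)}{|x-y|^{\alpha}}\,dx\,dy\right| .
\end{equation*}
Using $0\le K\le\|K\|_{L^{\infty}}$ and $0\le 1-\xi^{2^{\ast}_{\alpha}}(x)\xi^{2^{\ast}_{\alpha}}(y)\le\bigl(1-\xi^{2^{\ast}_{\alpha}}(x)\bigr)+\bigl(1-\xi^{2^{\ast}_{\alpha}}(y)\bigr)$, one is left with a constant times the sum of exactly the two double integrals estimated, in absolute value, in the proof of Lemma \ref{P2}; hence $I\!I=O(m\lambda^{-3-\varepsilon})$.

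For $I$ I would again use $Z=\xi Z^{\ast}$, $\partial_\lambda Z=\xi\,\partial_\lambda Z^{\ast}$, expand both gradients and isolate $\int_{\R^N}\nabla Z^{\ast}\cdot\nabla\partial_\lambda Z^{\ast}$, which leaves
\begin{equation*}
I=-\int_{\R^N}(1-\xi^{2})\,\nabla Z^{\ast}\cdot\nabla\partial_\lambda Z^{\ast}\,dx+\int_{\R^N}\xi Z^{\ast}\,\nabla\xi\cdot\nabla\partial_\lambda Z^{\ast}\,dx+\int_{\R^N}\xi\,\partial_\lambda Z^{\ast}\,\nabla\xi\cdot\nabla Z^{\ast}\,dx+\int_{\R^N}Z^{\ast}\,\partial_\lambda Z^{\ast}\,|\nabla\xi|^{2}\,dx .
\end{equation*}
Every summand carries a factor $1-\xi^{2}$ or $\nabla\xi$, hence is supported in $\{x:|(|x'|,x'')-(r_0,x_0'')|\ge\delta\}$. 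On that set, \eqref{uxi} and the elementary inequality $|x-z_j|\ge|(|x'|,x'')-(\overline{r},\overline{x}'')|$ force $|x-z_j|\ge\delta/2$ for every $j$, so there each bubble obeys the tail bounds $U_{z_j,\lambda}\lesssim\lambda^{-\frac{N-2}{2}}|x-z_j|^{-(N-2)}$, $|\nabla U_{z_j,\lambda}|\lesssim\lambda^{-\frac{N-2}{2}}|x-z_j|^{-(N-1)}$, $|\partial_\lambda U_{z_j,\lambda}|\lesssim\lambda^{-\frac{N}{2}}|x-z_j|^{-(N-2)}$ and $|\nabla\partial_\lambda U_{z_j,\lambda}|\lesssim\lambda^{-\frac{N}{2}}|x-z_j|^{-(N-1)}$. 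Expanding $Z^{\ast}=\sum_jU_{z_j,\lambda}$, integrating the resulting double sums over $\{|x-z_j|\ge\delta/2\}$ and using $\int_{\delta/2}^{\infty}r^{-(N-1)}\,dr<\infty$, I expect $|I|\le Cm^{2}\lambda^{-(N-1)}$; since $\lambda\sim m^{\frac{N-2}{N-4}}$ and $N\ge5$ this is $O(m\lambda^{-3-\varepsilon})$ for $\varepsilon>0$ small. Combining the two bounds completes the proof.

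The delicate point, typical of this circle of ideas, will be the bookkeeping of exponents: one must make sure that the $\lambda$-decay gained by confining the error to the region where $1-\xi\neq0$ or $\nabla\xi\neq0$ outruns the number $m\sim\lambda^{\frac{N-4}{N-2}}$ of bubbles, and it is here that the dimensional restriction $N\ge5$---and, via Lemma \ref{P2}, the condition $\alpha>5-\frac{6}{N-2}$---enters.
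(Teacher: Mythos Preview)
Your argument is correct and follows the same overall scheme as the paper: split $\partial_\lambda J(Z)-\partial_\lambda J(Z^{\ast})$ into a Dirichlet piece $I$ and a nonlocal piece $I\!I$, and for $I\!I$ bound $K$ by its sup-norm and invoke Lemma~\ref{P2}. The paper does exactly this for $I\!I$ (see \eqref{z22}).

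For $I$ you take a slightly different, more elementary route. The paper integrates by parts to work with $\int\Delta Z^{\ast}\,\partial_\lambda Z^{\ast}-\int\Delta Z\,\partial_\lambda Z$, then uses the equation $-\Delta U_{z_j,\lambda}=(|x|^{-\alpha}\ast U_{z_j,\lambda}^{2^{\ast}_{\alpha}})U_{z_j,\lambda}^{2^{\ast}_{\alpha}-1}$ to convert the $(1-\xi^{2})\Delta Z^{\ast}$ piece into a convolution integral handled by the HLS-type estimate from Lemma~\ref{P2}, obtaining $O(m^{2}\lambda^{-(2N+1-\alpha)})$; the $\Delta\xi$ and $\nabla\xi$ pieces are estimated directly to $O(m^{2}\lambda^{-(N-1)})$. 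You instead stay in gradient form, expand $\nabla(\xi Z^{\ast})$ and $\nabla(\xi\,\partial_\lambda Z^{\ast})$, and bound all four resulting terms by the pointwise tail estimates for $U$, $\nabla U$, $\partial_\lambda U$, $\nabla\partial_\lambda U$ on $\{|x-z_j|\ge\delta/2\}$, getting $O(m^{2}\lambda^{-(N-1)})$ throughout. Both bounds are $O(m\lambda^{-3-\varepsilon})$, so the conclusion is the same; your version simply avoids the detour through the limit PDE and the HLS machinery for the Dirichlet part.
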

	\begin{proof}
	Observe that
		\begin{equation}\label{dc_1}\aligned
		\frac{\partial J(Z_{\overline{r},\overline{x}'',\lambda}^{\ast})}{\partial \lambda}-\frac{\partial J(Z_{\overline{r},\overline{x}'',\lambda})}{\partial \lambda}
		=&\int_{\mathbb{R}^{N}}\Delta Z_{\overline{r},\overline{x}'',\lambda}^{\ast}\frac{\partial Z_{\overline{r},\overline{x}'',\lambda}^{\ast}}{\partial \lambda}dx-\int_{\mathbb{R}^{N}}\Delta Z_{\overline{r},\overline{x}'',\lambda}\frac{\partial Z_{\overline{r},\overline{x}'',\lambda}}{\partial \lambda}dx\\
		&+\Big(\int_{\mathbb{R}^N}\int_{\mathbb{R}^N}
		\frac{K(x)K(y)|Z_{\overline{r},\overline{x}'',\lambda}^{\ast}(x)|^{2^{\ast}_{\alpha}}
			(Z_{\overline{r},\overline{x}'',\lambda}^{\ast})^{2^{\ast}_{\alpha}-1}(y)\frac{\partial Z_{\overline{r},\overline{x}'',\lambda}^{\ast}}{\partial \lambda}(y)}{|x-y|^{\alpha}}dxdy\\
		&-\int_{\mathbb{R}^N}\int_{\mathbb{R}^N}
		\frac{K(x)K(y)|Z_{\overline{r},\overline{x}'',\lambda}(x)|^{2^{\ast}_{\alpha}}
			Z_{\overline{r},\overline{x}'',\lambda}^{2^{\ast}_{\alpha}-1}(y)\frac{\partial Z_{\overline{r},\overline{x}'',\lambda}}{\partial \lambda}(y)}{|x-y|^{\alpha}}dxdy\Big)\ .
		\endaligned
		\end{equation}
		Concerning the first two terms in \eqref{dc_1} we have 
		$$\aligned
		\int_{\mathbb{R}^{N}}&\Delta Z_{\overline{r},\overline{x}'',\lambda}^{\ast}\frac{\partial Z_{\overline{r},\overline{x}'',\lambda}^{\ast}}{\partial \lambda}dx-\int_{\mathbb{R}^{N}}\Delta Z_{\overline{r},\overline{x}'',\lambda}\frac{\partial Z_{\overline{r},\overline{x}'',\lambda}}{\partial \lambda}dx\\
		&=\int_{\mathbb{R}^{N}}\Delta Z_{\overline{r},\overline{x}'',\lambda}^{\ast}\frac{\partial Z_{\overline{r},\overline{x}'',\lambda}^{\ast}}{\partial \lambda}dx-\int_{\mathbb{R}^{N}}\xi(\xi\Delta Z_{\overline{r},\overline{x}'',\lambda}^{\ast}+Z_{\overline{r},\overline{x}'',\lambda}^{\ast}\Delta\xi +2\nabla\xi\nabla Z_{\overline{r},\overline{x}'',\lambda}^{\ast})\frac{\partial Z_{\overline{r},\overline{x}'',\lambda}^{\ast}}{\partial \lambda}dx.
		\endaligned$$
		By Lemma \ref{P2}, we have
		\begin{multline*}
		\int_{\mathbb{R}^{N}}(1-\xi^{2})\Delta Z_{\overline{r},\overline{x}'',\lambda}^{\ast}\frac{\partial Z_{\overline{r},\overline{x}'',\lambda}^{\ast}}{\partial \lambda}dx
		=\sum_{j=1}^{m}\int_{\mathbb{R}^N}\int_{\mathbb{R}^N}
		\frac{(1-\xi^{2}(x))|U_{z_j,\lambda}(x)|^{2^{\ast}_{\alpha}}
			U_{z_j,\lambda}^{2^{\ast}_{\alpha}-1}(y)\frac{\partial Z_{\overline{r},\overline{x}'',\lambda}^{\ast}}{\partial \lambda}(y)}{|x-y|^{\alpha}}dxdy\\
		=O(\frac{m^{2}}{\lambda^{2N+1-\alpha}}).
		\end{multline*}
		By inspection one has 
		\begin{equation}\label{z1}
			\aligned
			\left|\int_{\mathbb{R}^N}\xi\Delta\xi
			U_{z_j,\lambda}(y)\frac{\partial U_{z_j,\lambda}}{\partial \lambda}(y)dy\right|
			&\leq C\left|\int_{\mathbb{R}^{N}}\frac{\xi(y)\lambda^{\frac{N-2}{2}}}{(1+\lambda^{2}|y-z_{j} |^{2})^{\frac{N-2}{2}}}\frac{\lambda^{\frac{N-4}{2}}}
			{(1+\lambda^{2}|y-z_{j} |^{2})^{\frac{N-2}{2}}}dy\right|\\
			&\leq C\int_{\mathbb{R}^{N}}\frac{\xi(y+ z_{j})\lambda^{N-3}}{(1+\lambda^{2}|y |^{2})^{N-2}}dy\\
			&\leq C\int_{\mathbb{R}^{N}\setminus B_{\delta-\vartheta} (0)}\frac{\lambda^{N-3}}{\lambda^{2N-4}|y |^{2N-4}}dy
			=O(\frac{1}{\lambda^{N-1}}).
			\endaligned
		\end{equation}
		So we have
		$$
		\int_{\mathbb{R}^{N}}\xi Z_{\overline{r},\overline{x}'',\lambda}^{\ast}\Delta\xi \frac{\partial Z_{\overline{r},\overline{x}'',\lambda}^{\ast}}{\partial \lambda}dx=O(\frac{m^{2}}{\lambda^{N-1}}),
		$$
		and similarly we get
		$$
		\int_{\mathbb{R}^{N}}\xi\nabla\xi\nabla Z_{\overline{r},\overline{x}'',\lambda}^{\ast}\frac{\partial Z_{\overline{r},\overline{x}'',\lambda}^{\ast}}{\partial \lambda}dx
		=O(\frac{m^{2}}{\lambda^{N-1}}).
		$$
		We can conclude that
		\begin{equation}\label{z11}
			\int_{\mathbb{R}^{N}}\Delta Z_{\overline{r},\overline{x}'',\lambda}^{\ast}\frac{\partial Z_{\overline{r},\overline{x}'',\lambda}^{\ast}}{\partial \lambda}dx-\int_{\mathbb{R}^{N}}\Delta Z_{\overline{r},\overline{x}'',\lambda}\frac{\partial Z_{\overline{r},\overline{x}'',\lambda}}{\partial \lambda}dx=O(\frac{m^{2}}{\lambda^{N-1}})+O(\frac{m^{2}}{\lambda^{2N+1-\alpha}})=O(\frac{m}{\lambda^{3+\varepsilon}}).
		\end{equation}
		
		\noindent Let us estimate the remaining terms in \eqref{dc_1}. By Lemma \ref{P2} one has 
		\begin{equation}\label{z22}
			\aligned
			&\int_{\mathbb{R}^N}\int_{\mathbb{R}^N}
			\frac{K(x)K(y)|Z_{\overline{r},\overline{x}'',\lambda}^{\ast}(x)|^{2^{\ast}_{\alpha}}
				(Z_{\overline{r},\overline{x}'',\lambda}^{\ast})^{2^{\ast}_{\alpha}-1}(y)\frac{\partial Z_{\overline{r},\overline{x}'',\lambda}^{\ast}}{\partial \lambda}(y)}{|x-y|^{\alpha}}dxdy\\
			&\hspace{8mm}-\int_{\mathbb{R}^N}\int_{\mathbb{R}^N}
			\frac{K(x)K(y)|Z_{\overline{r},\overline{x}'',\lambda}(x)|^{2^{\ast}_{\alpha}}
				Z_{\overline{r},\overline{x}'',\lambda}^{2^{\ast}_{\alpha}-1}(y)\frac{\partial Z_{\overline{r},\overline{x}'',\lambda}}{\partial \lambda}(y)}{|x-y|^{\alpha}}dxdy\\
			&\leq C\left|\int_{\mathbb{R}^N}\int_{\mathbb{R}^N}
			\frac{|Z_{\overline{r},\overline{x}'',\lambda}^{\ast}(x)|^{2^{\ast}_{\alpha}}
				(Z_{\overline{r},\overline{x}'',\lambda}^{\ast})^{2^{\ast}_{\alpha}-1}(y)\frac{\partial Z_{\overline{r},\overline{x}'',\lambda}^{\ast}}{\partial \lambda}(y)}{|x-y|^{\alpha}}dxdy\right. \\
			&\left. \hspace{8mm}-\int_{\mathbb{R}^N}\int_{\mathbb{R}^N}
			\frac{|Z_{\overline{r},\overline{x}'',\lambda}(x)|^{2^{\ast}_{\alpha}}
				Z_{\overline{r},\overline{x}'',\lambda}^{2^{\ast}_{\alpha}-1}(y)\frac{\partial Z_{\overline{r},\overline{x}'',\lambda}}{\partial \lambda}(y)}{|x-y|^{\alpha}}dxdy\right|\\
			&=O(\frac{m}{\lambda^{3+\varepsilon}})
			\endaligned
		\end{equation}
		
\noindent 		Combining \eqref{z22} with \eqref{z11} yields the proof. 
	\end{proof}

	\begin{lem}\label{D2}
		We have
		$$
		\frac{\partial J(Z_{\overline{r},\overline{x}'',\lambda})}{\partial \lambda}=m\Big(-\frac{A_{1}}{\lambda^{3}}+\sum_{j=2}^{m}
		\frac{A_{2}}{\lambda^{N-1}|z_{1}-z_{j}|^{N-2}}+O(\frac{1}{\lambda^{3+\varepsilon}})\Big),
		$$
		for some positive constants $A_j$, $j=1, 2$.
	\end{lem}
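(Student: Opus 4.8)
The plan is to first use Lemma~\ref{P3} to replace $\partial_\lambda J(Z_{\overline{r},\overline{x}'',\lambda})$ by $\partial_\lambda J(Z^{\ast}_{\overline{r},\overline{x}'',\lambda})$ up to an error $O(m/\lambda^{3+\varepsilon})$, and then to split $J(Z^{\ast}_{\overline{r},\overline{x}'',\lambda})=\sum_{j=1}^{m}\mathcal J(U_{z_j,\lambda})+\mathcal I_{\mathrm{int}}$ into a single-bubble part and a bubble-bubble interaction part, where $\mathcal J(v)=\tfrac12\|v\|^{2}-\tfrac{1}{2\cdot2^{\ast}_{\alpha}}\iint\frac{K(x)K(y)|v(x)|^{2^{\ast}_{\alpha}}|v(y)|^{2^{\ast}_{\alpha}}}{|x-y|^{\alpha}}\,dx\,dy$. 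Differentiating the two pieces in $\lambda$ will produce the terms $-mA_1/\lambda^{3}$ and $mA_2\sum_{j=2}^{m}(\lambda^{N-1}|z_1-z_j|^{N-2})^{-1}$ respectively, and all remaining contributions are to be tracked through the weighted estimates of the Appendix.

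For the diagonal part, since $U_{z_j,\lambda}$ solves \eqref{eq1.5} one has $\|U_{z_j,\lambda}\|^{2}=\iint\frac{U_{z_j,\lambda}^{2^{\ast}_{\alpha}}(x)U_{z_j,\lambda}^{2^{\ast}_{\alpha}}(y)}{|x-y|^{\alpha}}\,dx\,dy$, which is invariant under scaling and translation, so writing $K=1+(K-1)$ gives $\mathcal J(U_{z_j,\lambda})=(\tfrac12-\tfrac1{2\cdot2^{\ast}_{\alpha}})\|U_{0,1}\|^{2}-\tfrac1{2^{\ast}_{\alpha}}\iint\frac{(K(x)-1)U_{z_j,\lambda}^{2^{\ast}_{\alpha}}(x)U_{z_j,\lambda}^{2^{\ast}_{\alpha}}(y)}{|x-y|^{\alpha}}\,dx\,dy+(\text{higher order})$. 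I would Taylor-expand $K$ about $(r_0,x_0'')$ and invoke \textbf{(K1)} ($K(r_0,x_0'')=1$, $\nabla K(r_0,x_0'')=0$) and \textbf{(K2)} ($K\in C^{3}$, $\Delta K(r_0,x_0'')<0$): the first-order term drops, the part odd in $x-z_j$ integrates to zero by the radial symmetry of $U_{z_j,\lambda}$ about $z_j$, the term proportional to $|z_j-(r_0,x_0'')|^{2}$ is $\lambda$-independent, and the remaining $\lambda$-dependent contribution at order $\lambda^{-2}$ equals $-\tfrac{\Delta K(r_0,x_0'')}{2N\cdot2^{\ast}_{\alpha}}\cdot\tfrac{\mathcal C}{\lambda^{2}}$, where $\mathcal C:=\iint\frac{|\xi|^{2}U_{0,1}^{2^{\ast}_{\alpha}}(\xi)U_{0,1}^{2^{\ast}_{\alpha}}(\eta)}{|\xi-\eta|^{\alpha}}\,d\xi\,d\eta>0$; here one uses $\iint\frac{(x-z_j)_{p}(x-z_j)_{q}U_{z_j,\lambda}^{2^{\ast}_{\alpha}}(x)U_{z_j,\lambda}^{2^{\ast}_{\alpha}}(y)}{|x-y|^{\alpha}}=\tfrac{\delta_{pq}}{N}\cdot\tfrac{\mathcal C}{\lambda^{2}}$ (rotational invariance of $U_{0,1}$) and $\sum_{p}\partial^{2}_{x_{p}x_{p}}K(r_0,x_0'')=\Delta K(r_0,x_0'')$. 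Since $\Delta K(r_0,x_0'')<0$ this equals $\tfrac{c'}{\lambda^{2}}$ with $c'=\tfrac{|\Delta K(r_0,x_0'')|\mathcal C}{2N\cdot2^{\ast}_{\alpha}}>0$; differentiating, $\partial_\lambda\mathcal J(U_{z_j,\lambda})=-\tfrac{2c'}{\lambda^{3}}+O(\lambda^{-3-\varepsilon})$, the error absorbing the $C^{3}$-Taylor remainder and the mixed $|z_j-(r_0,x_0'')|$-terms once $|(\overline r,\overline x'')-(r_0,x_0'')|\le\lambda^{-1+\theta}$ is used. Summing over $j$ gives $m(-A_1/\lambda^{3}+O(\lambda^{-3-\varepsilon}))$ with $A_1=2c'>0$.

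The interaction part $\mathcal I_{\mathrm{int}}$ is where the work concentrates. Proceeding as in the proof of Lemma~\ref{C5}, I would expand $|Z^{\ast}_{\overline{r},\overline{x}'',\lambda}|^{2^{\ast}_{\alpha}}$ near each $z_k$ (recall $1<2^{\ast}_{\alpha}\le3$) and estimate each term with Lemmas~\ref{B2}--\ref{B4} and \ref{P0}. From $\tfrac12\|Z^{\ast}_{\overline{r},\overline{x}'',\lambda}\|^{2}=\tfrac12\sum_{k}\|U_{z_k,\lambda}\|^{2}+\tfrac12\sum_{k\ne l}\langle U_{z_k,\lambda},U_{z_l,\lambda}\rangle$ with $\langle U_{z_k,\lambda},U_{z_l,\lambda}\rangle=\int\big(|x|^{-\alpha}\ast U_{z_k,\lambda}^{2^{\ast}_{\alpha}}\big)U_{z_k,\lambda}^{2^{\ast}_{\alpha}-1}U_{z_l,\lambda}\,dx=\tfrac{B_0}{(\lambda|z_k-z_l|)^{N-2}}+O\big((\lambda|z_k-z_l|)^{-(N-2)-\varepsilon}\big)$, $B_0>0$, and from the corresponding expansion of $\tfrac1{2\cdot2^{\ast}_{\alpha}}B(Z^{\ast}_{\overline{r},\overline{x}'',\lambda})$, the decisive point is that the naive leading interaction terms combine --- with a cancellation --- to leave only
$$\mathcal I_{\mathrm{int}}=-\frac{A_2}{N-2}\,m\sum_{j=2}^{m}\frac{1}{(\lambda|z_1-z_j|)^{N-2}}+O\Big(\frac{m}{\lambda^{2+\varepsilon}}\Big)$$
for a constant $A_2>0$; here the hypothesis $\alpha>5-\tfrac{6}{N-2}$ is exactly what forces the discarded contributions (the $(K-1)$-corrections inside the convolution, the $\xi$-truncation corrections, and the triple interactions $z_k$-$z_l$-$z_m$) to be of size $O(m\lambda^{-2-\varepsilon})$, which is the point where the order of the Riesz potential enters. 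Differentiating, $\partial_\lambda\mathcal I_{\mathrm{int}}=A_2\,m\sum_{j=2}^{m}\tfrac{1}{\lambda^{N-1}|z_1-z_j|^{N-2}}+O(m/\lambda^{3+\varepsilon})$. Combining with the diagonal estimate and Lemma~\ref{P3} yields the claim.

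I expect the main obstacle to be exactly the bookkeeping in $\mathcal I_{\mathrm{int}}$: one must make explicit the cancellation of the naive $O((\lambda|z_1-z_j|)^{-(N-2)})$ and $O((\lambda|z_1-z_j|)^{-\alpha})$ cross contributions and verify that every next-order correction is genuinely $O(m\lambda^{-2-\varepsilon})$ --- precisely the step calling for $\alpha>5-\tfrac{6}{N-2}$ and for which the weighted $\|\cdot\|_{\ast}$, $\|\cdot\|_{\ast\ast}$ norms of the Appendix are designed. A useful consistency check is that at $\lambda\sim m^{(N-2)/(N-4)}$ the two surviving main terms $m/\lambda^{3}$ and $m\sum_{j\ge2}(\lambda|z_1-z_j|)^{-(N-2)}\lambda^{-1}$ are of the same order in $m$, strictly larger than the error $O(m/\lambda^{3+\varepsilon})$, and carry opposite signs --- which is what makes $\partial_\lambda F=0$ in the reduced problem solvable.
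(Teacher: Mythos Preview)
Your plan is essentially the paper's: both reduce via Lemma~\ref{P3} to $\partial_\lambda J(Z^{\ast}_{\overline r,\overline x'',\lambda})$, both obtain the $-A_1/\lambda^{3}$ term from the second-order Taylor expansion of $K$ at $(r_0,x_0'')$ using \textbf{(K1)}--\textbf{(K2)}, and both extract the $A_2$ term from the leading bubble--bubble coupling $\int(|x|^{-\alpha}\ast U_{z_1,\lambda}^{2^{\ast}_{\alpha}})U_{z_1,\lambda}^{2^{\ast}_{\alpha}-1}U_{z_j,\lambda}\sim(\lambda|z_1-z_j|)^{-(N-2)}$. The one organizational difference is that the paper differentiates in $\lambda$ \emph{first} and then expands (splitting $\partial_\lambda J(Z^{\ast})=\mathcal Q_0+\mathcal Q_1$), whereas you expand $J(Z^{\ast})$ and then differentiate termwise; the paper's order has the small advantage that one never has to argue separately that the error $O(m/\lambda^{2+\varepsilon})$ in your energy expansion remains $O(m/\lambda^{3+\varepsilon})$ after applying $\partial_\lambda$, a point your sketch should make explicit.
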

	\begin{proof}
		By Lemma \ref{P3}, we know that
		\begin{equation}\label{D3}
			\frac{\partial J(Z_{\overline{r},\overline{x}'',\lambda})}{\partial \lambda}=\frac{\partial J(Z_{\overline{r},\overline{x}'',\lambda}^{\ast})}{\partial \lambda}+O(\frac{m}{\lambda^{3+\varepsilon}}).
		\end{equation}
		Let us calculate 
		\begin{multline}\label{ExpenL4.4}
			\frac{\partial J(Z_{\overline{r},\overline{x}'',\lambda}^{\ast})}{\partial \lambda}=-\int_{\mathbb{R}^{N}}\Delta Z_{\overline{r},\overline{x}'',\lambda}^{\ast}\frac{\partial Z_{\overline{r},\overline{x}'',\lambda}^{\ast}}{\partial \lambda}dx\\
			-\int_{\mathbb{R}^{N}}K(r,x'')\Big(|x|^{-\alpha}\ast K(r,x'')|Z_{\overline{r},\overline{x}'',\lambda}^{\ast}|^{2^{\ast}_{\alpha}}\Big)(Z_{\overline{r},\overline{x}'',\lambda}^{\ast})^{2^{\ast}_{\alpha}-1}
			\frac{\partial Z_{\overline{r},\overline{x}'',\lambda}^{\ast}}{\partial \lambda}dx\\
			=\sum_{j=1}^{m}\int_{\mathbb{R}^{N}}\Big(|x|^{-\alpha}\ast |U_{z_j,\lambda}|^{2^{\ast}_{\alpha}}\Big)U_{z_j,\lambda}^{2^{\ast}_{\alpha}-1}\frac{\partial Z_{\overline{r},\overline{x}'',\lambda}^{\ast}}{\partial \lambda}dx\\
			-\int_{\mathbb{R}^{N}}K(r,x'')\Big(|x|^{-\alpha}\ast K(r,x'')|Z_{\overline{r},\overline{x}'',\lambda}^{\ast}|^{2^{\ast}_{\alpha}}\Big)(Z_{\overline{r},\overline{x}'',\lambda}^{\ast})^{2^{\ast}_{\alpha}-1}\frac{\partial Z_{\overline{r},\overline{x}'',\lambda}^{\ast}}{\partial \lambda}dx\\
			=\sum_{j=1}^{m}\int_{\mathbb{R}^{N}}\Big(|x|^{-\alpha}\ast |U_{z_j,\lambda}|^{2^{\ast}_{\alpha}}\Big)U_{z_j,\lambda}^{2^{\ast}_{\alpha}-1}\frac{\partial Z_{\overline{r},\overline{x}'',\lambda}^{\ast}}{\partial \lambda}dx\\
			-\int_{\mathbb{R}^{N}}\Big(|x|^{-\alpha}\ast |Z_{\overline{r},\overline{x}'',\lambda}^{\ast}|^{2^{\ast}_{\alpha}}\Big)(Z_{\overline{r},\overline{x}'',\lambda}^{\ast})^{2^{\ast}_{\alpha}-1}
			\frac{\partial Z_{\overline{r},\overline{x}'',\lambda}^{\ast}}{\partial \lambda}dx\\
			+\int_{\mathbb{R}^{N}}\Big(|x|^{-\alpha}\ast |Z_{\overline{r},\overline{x}'',\lambda}^{\ast}|^{2^{\ast}_{\alpha}}\Big)(Z_{\overline{r},\overline{x}'',\lambda}^{\ast})^{2^{\ast}_{\alpha}-1}
			\frac{\partial Z_{\overline{r},\overline{x}'',\lambda}^{\ast}}{\partial \lambda}dx\\
			-\int_{\mathbb{R}^{N}}K(r,x'')\Big(|x|^{-\alpha}\ast |Z_{\overline{r},\overline{x}'',\lambda}^{\ast}|^{2^{\ast}_{\alpha}}\Big)(Z_{\overline{r},\overline{x}'',\lambda}^{\ast})^{2^{\ast}_{\alpha}-1}
			\frac{\partial Z_{\overline{r},\overline{x}'',\lambda}^{\ast}}{\partial \lambda}dx\\
			+\int_{\mathbb{R}^{N}}K(r,x'')\Big(|x|^{-\alpha}\ast |Z_{\overline{r},\overline{x}'',\lambda}^{\ast}|^{2^{\ast}_{\alpha}}\Big)(Z_{\overline{r},\overline{x}'',\lambda}^{\ast})^{2^{\ast}_{\alpha}-1}
			\frac{\partial Z_{\overline{r},\overline{x}'',\lambda}^{\ast}}{\partial \lambda}dx\\
			-\int_{\mathbb{R}^{N}}K(r,x'')\Big(|x|^{-\alpha}\ast K(r,x'')|Z_{\overline{r},\overline{x}'',\lambda}^{\ast}|^{2^{\ast}_{\alpha}}\Big)(Z_{\overline{r},\overline{x}'',\lambda}^{\ast})^{2^{\ast}_{\alpha}-1}
			\frac{\partial Z_{\overline{r},\overline{x}'',\lambda}^{\ast}}{\partial \lambda}dx\\
			=\sum_{j=1}^{m}\int_{\mathbb{R}^{N}}\Big(|x|^{-\alpha}\ast |U_{z_j,\lambda}|^{2^{\ast}_{\alpha}}\Big)U_{z_j,\lambda}^{2^{\ast}_{\alpha}-1}\frac{\partial Z_{\overline{r},\overline{x}'',\lambda}^{\ast}}{\partial \lambda}dx\\
			-\int_{\mathbb{R}^{N}}\Big(|x|^{-\alpha}\ast |Z_{\overline{r},\overline{x}'',\lambda}^{\ast}|^{2^{\ast}_{\alpha}}\Big)(Z_{\overline{r},\overline{x}'',\lambda}^{\ast})^{2^{\ast}_{\alpha}-1}
			\frac{\partial Z_{\overline{r},\overline{x}'',\lambda}^{\ast}}{\partial \lambda}dx\\
			+\frac{1}{2\cdot2^{\ast}_{\alpha}}\int_{\mathbb{R}^{N}}(1-K(r,x''))\frac{\partial }{\partial \lambda}\Big[\Big(|x|^{-\alpha}\ast \big((1+K(r,x''))|Z_{\overline{r},\overline{x}'',\lambda}^{\ast}|^{2^{\ast}_{\alpha}}\big)\Big)|Z_{\overline{r},\overline{x}'',\lambda}^{\ast}|^{2^{\ast}_{\alpha}}\Big]dx\\
			=:\cq_0+\cq_1.
		\end{multline}
		
		In order to estimate $\cq_0$, observe that
		\begin{equation}\label{5.1}
			\aligned
			&\int_{\mathbb{R}^{N}}\Bigg[\Big(|x|^{-\alpha}\ast |Z_{\overline{r},\overline{x}'',\lambda}^{\ast}|^{2^{\ast}_{\alpha}}\Big)(Z_{\overline{r},\overline{x}'',\lambda}^{\ast})^{2^{\ast}_{\alpha}-1}-\sum_{j=1}^{m}\Big(|x|^{-\alpha}\ast |U_{z_j,\lambda}|^{2^{\ast}_{\alpha}}\Big)U_{z_j,\lambda}^{2^{\ast}_{\alpha}-1}\Bigg] \frac{\partial Z_{\overline{r},\overline{x}'',\lambda}^{\ast}}{\partial \lambda}dx\\
			=&m\int_{\Omega_{1}}\Bigg[\Big(|x|^{-\alpha}\ast |Z_{\overline{r},\overline{x}'',\lambda}^{\ast}|^{2^{\ast}_{\alpha}}\Big)|Z_{\overline{r},\overline{x}'',\lambda}^{\ast}|^{2^{\ast}_{\alpha}-1}-\sum_{j=1}^{m}\Big(|x|^{-\alpha}\ast |U_{z_j,\lambda}|^{2^{\ast}_{\alpha}}\Big)U_{z_j,\lambda}^{2^{\ast}_{\alpha}-1}\Bigg]\frac{\partial Z_{\overline{r},\overline{x}'',\lambda}^{\ast}}{\partial \lambda}dx\\
			=& m\Bigg(\int_{\Omega_{1}}\Bigg[2^{\ast}_{\alpha}\Big(|x|^{-\alpha}\ast |U_{z_1,\lambda}^{2^{\ast}_{\alpha}-1}\sum_{i=2}^{m}U_{z_i,\lambda}|\Big)U_{z_1,\lambda}^{2^{\ast}_{\alpha}-1}+(2^{\ast}_{\alpha}-1)\Big(|x|^{-\alpha}\ast |U_{z_1,\lambda}|^{2^{\ast}_{\alpha}}\Big)U_{z_1,\lambda}^{2^{\ast}_{\alpha}-2}\sum_{i=2}^{m}U_{z_i,\lambda}\Bigg]\frac{\partial U_{z_1,\lambda}}{\partial \lambda}dx\\
			&+O(\frac{1}{\lambda^{3+\varepsilon}})\Bigg)\ .
			\endaligned\end{equation}
		Applying Lemmas \ref{P0}, we have 
		\begin{multline*}
		\int_{\Omega_{1}}\Big[\Big(|x|^{-\alpha}\ast |U_{z_1,\lambda}|^{2^{\ast}_{\alpha}}\Big)U_{z_1,\lambda}^{2^{\ast}_{\alpha}-2}U_{z_i,\lambda}\Big]\frac{\partial U_{z_1,\lambda}}{\partial \lambda}dx\\
		=-C\Big[\int_{\Omega_{1}}\frac{\lambda^{\frac{\alpha}{2}}}{(1+\lambda^{2}|x-z_{1} |^{2})^{\frac{\alpha}{2}}}\frac{\lambda^{\frac{4-\alpha}{2}}}{(1+\lambda^{2}|x-z_{1} |^{2})^{\frac{4-\alpha}{2}}}\frac{\lambda^{\frac{N-2}{2}}}{(1+\lambda^{2}|x-z_{i} |^{2})^{\frac{N-2}{2}}}\frac{\lambda^\frac{N-4}{2}}{(1+\lambda^{2}|x-z_{1} |^{2})^{\frac{N-2}{2}}}dx\\
		-\int_{\Omega_{1}}\frac{\lambda^{\frac{\alpha}{2}}}{(1+\lambda^{2}|x-z_{1} |^{2})^{\frac{\alpha}{2}}}\frac{\lambda^{\frac{4-\alpha}{2}}}{(1+\lambda^{2}|x-z_{1} |^{2})^{\frac{4-\alpha}{2}}}\frac{\lambda^{\frac{N-2}{2}}}{(1+\lambda^{2}|x-z_{i} |^{2})^{\frac{N-2}{2}}}\frac{2\lambda^\frac{N-4}{2}}{(1+\lambda^{2}|x-z_{1} |^{2})^{\frac{N}{2}}}dx\Big]\\
		=-C\Big[\int_{\Omega_{1}}\frac{\lambda^{\frac{N}{2}}}{(1+\lambda^{2}|x-z_{1} |^{2})^{\frac{N+2}{2}}}\frac{\lambda^{\frac{N-2}{2}}}{(1+\lambda^{2}|x-z_{i} |^{2})^{\frac{N-2}{2}}}dx\\
		-\int_{\Omega_{1}}\frac{2\lambda^{\frac{N}{2}}}{(1+\lambda^{2}|x-z_{1} |^{2})^{\frac{N+4}{2}}}\frac{\lambda^{\frac{N-2}{2}}}{(1+\lambda^{2}|x-z_{i} |^{2})^{\frac{N-2}{2}}}dx\Big]\\
		=-\frac{C}{\lambda^{N-1}|z_{1}-z_{i}|^{N-2}}, \quad i=2,\dots,m \ .
		\end{multline*}
				
		\noindent Consequently,
		\begin{equation}\label{5.2}
			\aligned
			\sum_{i=2}^{m}\int_{\Omega_{1}}\Big[\Big(|x|^{-\alpha}\ast |U_{z_1,\lambda}|^{2^{\ast}_{\alpha}}\Big)U_{z_1,\lambda}^{2^{\ast}_{\alpha}-2}U_{z_i,\lambda}\Big]\frac{\partial U_{z_1,\lambda}}{\partial \lambda}dx
			=-\sum_{i=2}^{m}\frac{C}{\lambda^{N-1}|z_{1}-z_{i}|^{N-2}}\ .
			\endaligned\end{equation}
		Notice that 
		\begin{multline*}
		\int_{\Omega_{1}}\Big(|x|^{-\alpha}\ast |U_{z_1,\lambda}^{2^{\ast}_{\alpha}-1}U_{z_i,\lambda}|\Big)U_{z_1,\lambda}^{2^{\ast}_{\alpha}-1}\frac{\partial U_{z_1,\lambda}}{\partial \lambda}dx\\
		=\frac{1}{2^{\ast}_{\alpha}}\Big[\frac{\partial }{\partial \lambda}\int_{\Omega_{1}}\Big(|y|^{-\alpha}\ast |U_{z_1,\lambda}|^{2^{\ast}_{\alpha}}\Big) U_{z_1,\lambda}^{2^{\ast}_{\alpha}-1}(y)U_{z_i,\lambda}(y)dy\\
		-\int_{\Omega_{1}}\Big(|y|^{-\alpha}\ast |U_{z_1,\lambda}|^{2^{\ast}_{\alpha}}\Big)\frac{\partial U_{z_1,\lambda}^{2^{\ast}_{\alpha}-1}(y)}{\partial \lambda}U_{z_i,\lambda}(y)dy\\
		-\int_{\Omega_{1}}\Big(|y|^{-\alpha}\ast |U_{z_1,\lambda}|^{2^{\ast}_{\alpha}}\Big)U_{z_1,\lambda}^{2^{\ast}_{\alpha}-1}(y)\frac{\partial U_{z_i,\lambda}}{\partial \lambda}(y)dy\Big].
		\end{multline*}
		Then we have
		$$\aligned
		&\int_{\Omega_{1}}\Big(|x|^{-\alpha}\ast |U_{z_1,\lambda}^{2^{\ast}_{\alpha}-1}U_{z_i,\lambda}|\Big)U_{z_1,\lambda}^{2^{\ast}_{\alpha}-1}\frac{\partial U_{z_1,\lambda}}{\partial \lambda}dx\\
		=&C\frac{\partial }{\partial \lambda}\int_{\Omega_{1}}\frac{\lambda^{\frac{\alpha}{2}}}{(1+\lambda^{2}|y- z_{1}|^{2})^{\frac{\alpha}{2}}}\frac{\lambda^\frac{N+2-\alpha}{2}}{(1+\lambda^{2}|y- z_{1}|^{2})^{\frac{N+2-\alpha}{2}}}\frac{\lambda^{\frac{N-2}{2}}}{(1+\lambda^{2}|y- z_{i}|^{2})^{\frac{N-2}{2}}}dy\\
		&+C\bigg[\int_{\Omega_{1}}\frac{\lambda^{\frac{\alpha}{2}}}{(1+\lambda^{2}|y- z_{1}|^{2})^{\frac{\alpha}{2}}}\frac{\lambda^\frac{N-\alpha}{2}}{(1+\lambda^{2}|y- z_{1}|^{2})^{\frac{N+2-\alpha}{2}}}\frac{\lambda^{\frac{N-2}{2}}}{(1+\lambda^{2}|y- z_{i}|^{2})^{\frac{N-2}{2}}}dy\\
		&\hspace{8mm}-\int_{\Omega_{1}}\frac{\lambda^{\frac{\alpha}{2}}}{(1+\lambda^{2}|y- z_{1}|^{2})^{\frac{\alpha}{2}}}\frac{2\lambda^{\frac{N-\alpha}{2}}}{(1+\lambda^{2}|y- z_{1}|^{2})^{\frac{N+4-\alpha}{2}}}\frac{\lambda^{\frac{N-2}{2}}}{(1+\lambda^{2}|y- z_{i}|^{2})^{\frac{N-2}{2}}}dy\bigg]\\
		&+C\bigg[\int_{\Omega_{1}}\frac{\lambda^{\frac{\alpha}{2}}}{(1+\lambda^{2}|y- z_{1}|^{2})^{\frac{\alpha}{2}}}\frac{\lambda^{\frac{N+2-\alpha}{2}}}{(1+\lambda^{2}|y- z_{1}|^{2})^{\frac{N+2-\alpha}{2}}}\frac{\lambda^\frac{N-4}{2}}{(1+\lambda^{2}|y- z_{i}|^{2})^{\frac{N-2}{2}}}dy\\
		&\hspace{8mm}-\int_{\Omega_{1}}\frac{\lambda^{\frac{\alpha}{2}}}{(1+\lambda^{2}|y- z_{1}|^{2})^{\frac{\alpha}{2}}}\frac{\lambda^{\frac{N+2-\alpha}{2}}}{(1+\lambda^{2}|y- z_{1}|^{2})^{\frac{N+2-\alpha}{2}}}\frac{2\lambda^\frac{N-4}{2}}{(1+\lambda^{2}|y- z_{i}|^{2})^{\frac{N}{2}}}dy\bigg]\\
		=&\frac{\partial }{\partial \lambda}\frac{C}{\lambda^{N-2}|z_{1}-z_{i}|^{N-2}}+\frac{2C}{\lambda^{N-1}|z_{1}-z_{i}|^{N-2}}
		=-\frac{C}{\lambda^{N-1}|z_{1}-z_{i}|^{N-2}}.
		\endaligned$$
		So, we obtain
		\begin{equation}\label{5.6}
			\sum_{i=2}^{m}\int_{\Omega_{1}}\Big(|x|^{-\alpha}\ast |U_{z_1,\lambda}^{2^{\ast}_{\alpha}-1}U_{z_i,\lambda}|\Big)U_{z_1,\lambda}^{2^{\ast}_{\alpha}-1}\frac{\partial U_{z_1,\lambda}}{\partial \lambda}dx=- \sum_{i=2}^{m}\frac{C}{\lambda^{N-1}|z_{1}-z_{i}|^{N-2}}.
		\end{equation}
		Combining \eqref{5.1}-\eqref{5.6}, we can conclude that
		$$\aligned
		\int_{\mathbb{R}^{N}}&\Big(|x|^{-\alpha}\ast |Z_{\overline{r},\overline{x}'',\lambda}^{\ast}|^{2^{\ast}_{\alpha}}\Big)|Z_{\overline{r},\overline{x}'',\lambda}^{\ast}|^{2^{\ast}_{\alpha}-1}
		\frac{\partial Z_{\overline{r},\overline{x}'',\lambda}^{\ast}}{\partial \lambda}dx
		-\sum_{j=1}^{m}\int_{\mathbb{R}^{N}}\Big(|x|^{-\alpha}\ast |U_{z_j,\lambda}|^{2^{\ast}_{\alpha}}\Big)U_{z_j,\lambda}^{2^{\ast}_{\alpha}-1}\frac{\partial Z_{\overline{r},\overline{x}'',\lambda}^{\ast}}{\partial \lambda}dx\\
		&=m\Big(-\sum_{j=2}^{m}
		\frac{A_{2}}{\lambda^{N-1}|z_{1}-z_{j}|^{N-2}}+O(\frac{1}{\lambda^{3+\varepsilon}})\Big),
		\endaligned$$
		which implies
		\begin{equation}\label{5.11}
			\aligned
		    \cq_0=m\Big(\sum_{j=2}^{m}
			\frac{A_{2}}{\lambda^{N-1}|z_{1}-z_{j}|^{N-2}}+O(\frac{1}{\lambda^{3+\varepsilon}})\Big).
			\endaligned
		\end{equation}

		Since the function K is bounded and satisfies the assumption $\textbf{(K2)}$, we estimate $\cq_1$ as follows
			\begin{multline*}
			\frac{1}{2\cdot2^{\ast}_{\alpha}}\int_{\mathbb{R}^{N}}(1-K(r,x''))\frac{\partial }{\partial \lambda}\Big[\Big(|x|^{-\alpha}\ast \big((1+K(r,x''))|Z_{\overline{r},\overline{x}'',\lambda}^{\ast}|^{2^{\ast}_{\alpha}}\big)\Big)|Z_{\overline{r},\overline{x}'',\lambda}^{\ast}|^{2^{\ast}_{\alpha}}\Big]dx\\
			=m\Bigg(\frac{1}{2\cdot2^{\ast}_{\alpha}}\int_{\Omega_{1}}(1-K(r,x''))\frac{\partial }{\partial \lambda}\Big[\Big(|x|^{-\alpha}\ast \big((1+K(r,x''))|U_{z_1,\lambda}|^{2^{\ast}_{\alpha}}\big)\Big)|U_{z_1,\lambda}|^{2^{\ast}_{\alpha}}\Big]dx+O(\frac{1}{\lambda^{3+\varepsilon}})\Bigg)\\
			=m\Big(\frac{C}{2^{\ast}_{\alpha}}\int_{\Omega_{1}}(1-K(r,x''))\frac{\partial }{\partial \lambda}\Big[\Big(|x|^{-\alpha}\ast |U_{z_1,\lambda}|^{2^{\ast}_{\alpha}}\Big)|U_{z_1,\lambda}|^{2^{\ast}_{\alpha}}\Big]dx+O(\frac{1}{\lambda^{3+\varepsilon}})\Big)\\
			=m\Big(-C\int_{B_{\lambda^{-(\frac{1}{2}+\varepsilon)}} (x_{0})}\Big(\sum_{i,j=1}^{m}\frac{1}{2}
			\frac{\partial ^{2} K(x_{0})}{\partial x_i\partial x_j}(x_{i}-x_{0i})(x_{j}-x_{0j})\\
			+\sum_{i,j,k=1}^{m}\frac{1}{6}
			\frac{\partial ^{3} K(x_{0}+\tau (x-x_0))}{\partial x_{i}\partial x_{j}\partial x_{k}}(x_{i}-x_{0i})(x_{j}-x_{0j})(x_{k}-x_{0k})\Big) \frac{1}{2^{\ast}_{\alpha}}\frac{\partial \Big(|x|^{-\alpha}\ast |U_{z_1,\lambda}|^{2^{\ast}_{\alpha}}\Big)|U_{z_1,\lambda}|^{2^{\ast}_{\alpha}}}{\partial \lambda}\\
			-C\int_{B^C_{\lambda^{-(\frac{1}{2}+\varepsilon)}} (x_{0})}
			(K(x)-1)\frac{1}{2^{\ast}_{\alpha}}\frac{\partial \Big(|x|^{-\alpha}\ast |U_{z_1,\lambda}|^{2^{\ast}_{\alpha}}\Big)|U_{z_1,\lambda}|^{2^{\ast}_{\alpha}}}{\partial \lambda}+O(\frac{1}{\lambda^{3+\varepsilon}})\Big)\\
			=m\Big(-C\int_{B_{\frac{3}{2}\lambda^{-(\frac{1}{2}+\varepsilon)}} (z_{1})}\Big(\sum_{i,j=1}^{m}\frac{1}{2}
			\frac{\partial ^{2} K(x_{0})}{\partial x_i\partial x_j}(x_{i}-x_{0i})(x_{j}-x_{0j})+O( ||x-x_{0}||^{3})\Big)\frac{1}{2^{\ast}_{\alpha}}\frac{\partial \Big(|x|^{-\alpha}\ast |U_{z_1,\lambda}|^{2^{\ast}_{\alpha}}\Big)|U_{z_1,\lambda}|^{2^{\ast}_{\alpha}}}{\partial \lambda}\\
			+O\Big(\int_{B^C_{\frac{1}{2}\lambda^{-(\frac{1}{2}+\varepsilon)}} (z_{1})}
			\frac{1}{\lambda}\Big(|x|^{-\alpha}\ast |U_{z_1,\lambda}|^{2^{\ast}_{\alpha}}\Big)|U_{z_1,\lambda}|^{2^{\ast}_{\alpha}}\Big)+O(\frac{1}{\lambda^{3+\varepsilon}})\Big)\\
			=m\Big(-C\int_{B_{\frac{3}{2}\lambda^{-(\frac{1}{2}+\varepsilon)}} (z_{1})}\Big(\sum_{i,j=1}^{m}\frac{1}{2}
			\frac{\partial ^{2} K(x_{0})}{\partial x_i\partial x_j}(x_{i}-x_{0i})(x_{j}-x_{0j})+O( ||x-x_{0}||^{3})\Big)\frac{1}{2^{\ast}_{\alpha}}\frac{\partial \Big(|x|^{-\alpha}\ast |U_{z_1,\lambda}|^{2^{\ast}_{\alpha}}\Big)|U_{z_1,\lambda}|^{2^{\ast}_{\alpha}}}{\partial \lambda}\\
			+O(\frac{1}{\lambda^{3+\varepsilon}})\Big)\\
			=m\Big(-\frac{C}{2^{\ast}_{\alpha}}\int_{B_{\frac{3}{2}\lambda^{-(\frac{1}{2}+\varepsilon)}} (z_{1})}\Big(\sum_{i,j=1}^{m}\frac{1}{2}
			\frac{\partial ^{2} K(x_{0})}{\partial x_i\partial x_j}(x_{i}-x_{0i})(x_{j}-x_{0j})\Big)\frac{\partial \Big((|x|^{-\alpha}\ast |U_{z_1,\lambda}|^{2^{\ast}_{\alpha}})U_{z_1,\lambda}^{2^{\ast}_{\alpha}}\Big)}{\partial\lambda}\\
			-\frac{C}{2^{\ast}_{\alpha}}\int_{B_{\frac{3}{2}\lambda^{-(\frac{1}{2}+\varepsilon)}} (z_{1})}O(||x-x_{0}||^{3})\frac{\partial \Big(|x|^{-\alpha}\ast |U_{z_1,\lambda}|^{2^{\ast}_{\alpha}}\Big)|U_{z_1,\lambda}|^{2^{\ast}_{\alpha}}}{\partial \lambda}+O(\frac{1}{\lambda^{3+\varepsilon}})\Big)\\
			=m\Big(-\frac{C}{2^{\ast}_{\alpha}}\frac{\partial}{\partial\lambda}\int_{B_{\frac{3}{2}\lambda^{-(\frac{1}{2}+\varepsilon)}} (0)}\Big(\sum_{i,j=1}^{m}\frac{1}{2}
			\frac{\partial ^{2} K(x_{0})}{\partial x_i\partial x_j}(y_{i}+z_{1i}-x_{0i})(y_{j}+z_{1j}-x_{0j})\Big) \Big(|y|^{-\alpha}\ast |U_{0,\lambda}|^{2^{\ast}_{\alpha}}\Big)U_{0,\lambda}^{2^{\ast}_{\alpha}}\\
			+\frac{C}{2^{\ast}_{\alpha}}\int_{B_{\frac{3}{2}\lambda^{-(\frac{1}{2}+\varepsilon)}} (0)} O(||y+z_{1}-x_{0}||^{3})\frac{1}{\lambda}\Big(|y|^{-\alpha}\ast |U_{0,\lambda}|^{2^{\ast}_{\alpha}}\Big)U_{0,\lambda}^{2^{\ast}_{\alpha}}+O(\frac{1}{\lambda^{3+\varepsilon}})\Big)\\
			=m\Big(-\frac{C}{2^{\ast}_{\alpha}}\frac{\partial}{\partial\lambda}\int\Big(\sum_{i,j=1}^{m}\frac{1}{2}
			\frac{\partial ^{2} K(x_{0})}{\partial x_i\partial x_j}(\frac{y_{i}}{\lambda}+z_{1i}-x_{0i})(\frac{y_{j}}{\lambda}+z_{1j}-x_{0j})\Big) \Big(|y|^{-\alpha}\ast |U_{0,1}|^{2^{\ast}_{\alpha}}\Big)U_{0,1}^{2^{\ast}_{\alpha}}\\
			-\frac{C}{2^{\ast}_{\alpha}}\frac{\partial}{\partial\lambda}\int_{B^C_{\frac{3}{2}\lambda^{-(\frac{1}{2}+\varepsilon)}} (0)}\Big(\sum_{i,j=1}^{m}\frac{1}{2}
			\frac{\partial ^{2} K(x_{0})}{\partial x_i\partial x_j}(y_{i}+z_{1i}-x_{0i})(y_{j}+z_{1j}-x_{0j})\Big) \Big(|y|^{-\alpha}\ast |U_{0,\lambda}|^{2^{\ast}_{\alpha}}\Big)U_{0,\lambda}^{2^{\ast}_{\alpha}}\\
			+\frac{C}{2^{\ast}_{\alpha}}\int O(||\frac{y}{\lambda}+z_{1}-x_{0}||^{3})\frac{1}{\lambda}\Big(|y|^{-\alpha}\ast |U_{0,1}|^{2^{\ast}_{\alpha}}\Big)U_{0,1}^{2^{\ast}_{\alpha}}+O(\frac{1}{\lambda^{3+\varepsilon}})\Big)
			\end{multline*}
		\begin{equation}\label{cq12}
			\aligned
			=&m\Big(-\frac{C}{2^{\ast}_{\alpha}}\frac{\partial}{\partial\lambda}\int\Big(\sum_{i,j=1}^{m}\frac{1}{2}
			\frac{\partial ^{2} K(x_{0})}{\partial x_i ^{2}}(\frac{y_{i}}{\lambda}+z_{1i}-x_{0i})^{2}\Big) \Big(|y|^{-\alpha}\ast |U_{0,1}|^{2^{\ast}_{\alpha}}\Big)U_{0,1}^{2^{\ast}_{\alpha}}+O(\frac{1}{\lambda^{3+\varepsilon}})\Big)\\
			=&m\Big(-\frac{C}{2^{\ast}_{\alpha}} \frac{\partial}{\partial\lambda}\int\frac{1}{2\lambda^{2}}\sum_{i,j=1}^{m}\frac{\partial ^{2} K(x_{0})}{\partial x_i ^{2}}
			{y_{i}}^{2} \Big(|y|^{-\alpha}\ast |U_{0,1}|^{2^{\ast}_{\alpha}}\Big)U_{0,1}^{2^{\ast}_{\alpha}}
			+O(\frac{1}{\lambda^{3+\varepsilon}})\Big)\\
			=&m\Big(\frac{C}{2^{\ast}_{\alpha}}\frac{1}{\lambda^{3}}\frac{\Delta K(x_{0})}{N}\int
			{y}^{2} \Big(|y|^{-\alpha}\ast |U_{0,1}|^{2^{\ast}_{\alpha}}\Big)U_{0,1}^{2^{\ast}_{\alpha}}
			+O(\frac{1}{\lambda^{3+\varepsilon}})\Big)\\
			=&m\Big(-\frac{A_{1}}{\lambda^{3}} +O(\frac{1}{\lambda^{3+\varepsilon}})\Big).
			\endaligned
		\end{equation}
			
	    So we conclude that
		\begin{equation}\label{5.12}
			\cq_1
			=m\Big(-\frac{A_{1}}{\lambda^{3}} +O(\frac{1}{\lambda^{3+\varepsilon}})\Big).
		\end{equation}
	Combine \eqref{5.11} and \eqref{5.12} to obtain
	$$
	\frac{\partial J(Z_{\overline{r},\overline{x}'',\lambda})}{\partial \lambda}=m\Big(-\frac{A_{1}}{\lambda^{3}}+\sum_{j=2}^{m}
	\frac{{A}_{2}}{\lambda^{N-1}|z_{1}-z_{j}|^{N-2}}+O(\frac{1}{\lambda^{3+\varepsilon}})\Big),
	$$
	where $A_j$, $j=1, 2$ are some positive constants.
\end{proof}



\section{Proof of the main result}
\noindent Next we look for a suitable triplet $(\overline{r},\overline{x}'',\lambda)$ such that the function $Z_{\overline{r},\overline{x}'',\lambda}+
\phi_{\overline{r},\overline{x}'',\lambda}$ turns out to be a solution to \eqref{CFL}. For this purpose, we will establish local Poho\v{z}aev identities to localize bubbles.

\begin{lem}\label{D1}
Suppose that $(\overline{r},\overline{x}'',\lambda)$ satisfies
\begin{equation}\label{d1}
	\int_{D_{\rho}}\Big(-\Delta u_{m}
	-K(r,x'')\Big(|x|^{-\alpha}\ast (K(r,x'')|u_{m}|^{2^{\ast}_{\alpha}})\Big)u_{m}^{2^{\ast}_{\alpha}-1}\Big)\langle x,\nabla u_{m}\rangle dx=0,
\end{equation}
\begin{equation}\label{d2}
	\int_{D_{\rho}}\Big(-\Delta u_{m}
	-K(r,x'')\Big(|x|^{-\alpha}\ast (K(r,x'')|u_{m}|^{2^{\ast}_{\alpha}})\Big)u_{m}^{2^{\ast}_{\alpha}-1}\Big)\frac{\partial u_{m}}{\partial x_{i}} dx=0, i=3,\cdots,N,
\end{equation}
and
\begin{equation}\label{d3}
	\int_{\mathbb{R}^{N}}\Big(-\Delta u_{m}-K(r,x'')\Big(|x|^{-\alpha}\ast (K(r,x'')|u_{m}|^{2^{\ast}_{\alpha}})\Big)u_{m}^{2^{\ast}_{\alpha}-1}\Big)\frac{\partial Z_{\overline{r},\overline{x}'',\lambda}}{\partial \lambda} dx=0,
\end{equation}
where $u_{m}=Z_{\overline{r},\overline{x}'',\lambda}+
\phi_{\overline{r},\overline{x}'',\lambda}$ and $D_{\rho}=\{(r, x''):|(r, x'')-(r_0, x_0'' )|\leq\rho\}$ with $\rho\in(2\delta, 5\delta)$. Then $c_{i}=0, i=1,\cdot\cdot\cdot,N$.
\end{lem}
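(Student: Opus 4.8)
The plan is to show that the assumptions \eqref{d1}–\eqref{d3} force the linear system satisfied by the Lagrange multipliers $c_i$ to be homogeneous with an invertible coefficient matrix, so that $c_i=0$. First I would observe that, since $u_m$ solves \eqref{c14}, we have
$$
-\Delta u_{m}-K(r,x'')\Big(|x|^{-\alpha}\ast (K(r,x'')|u_{m}|^{2^{\ast}_{\alpha}})\Big)u_{m}^{2^{\ast}_{\alpha}-1}=\sum_{l=1}^{N}c_{l}\sum_{j=1}^{m}E_{j,l},
$$
where $E_{j,l}:=(2^{\ast}_{\alpha}-1)\big(|x|^{-\alpha}\ast |Z_{z_j,\lambda}|^{2^{\ast}_{\alpha}}\big)Z_{z_j,\lambda}^{2^{\ast}_{\alpha}-2}Z_{j,l}+2^{\ast}_{\alpha}\big(|x|^{-\alpha}\ast (Z_{z_j,\lambda}^{2^{\ast}_{\alpha}-1}Z_{j,l})\big)Z_{z_j,\lambda}^{2^{\ast}_{\alpha}-1}$. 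Substituting this into \eqref{d1}, \eqref{d2}, \eqref{d3} turns each identity into a linear relation among $c_1,\dots,c_N$:
$$
\sum_{l=1}^{N}c_{l}\,\Big(\sum_{j=1}^{m}\int_{D_{\rho}}E_{j,l}\,\langle x,\nabla u_m\rangle\,dx\Big)=0,\qquad
\sum_{l=1}^{N}c_{l}\,\Big(\sum_{j=1}^{m}\int_{D_{\rho}}E_{j,l}\,\frac{\partial u_m}{\partial x_i}\,dx\Big)=0,
$$
and similarly with $\partial_\lambda Z_{\overline r,\overline x'',\lambda}$ over $\mathbb{R}^N$. Writing $u_m=Z_{\overline r,\overline x'',\lambda}+\phi$ and recalling from Lemma \ref{C3} that $\|\phi\|_{\ast}\le C\lambda^{-1-\varepsilon}$, I expect the dominant contribution in each bracket to come from pairing $E_{1,l}$ against the derivative of the nearest bubble $Z_{z_1,\lambda}$.

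The second step is to compute the $N\times N$ coefficient matrix asymptotically. The key point is that the test functions $\langle x,\nabla u_m\rangle$, $\partial u_m/\partial x_i$, and $\partial_\lambda Z_{\overline r,\overline x'',\lambda}$ are, to leading order, linear combinations of the $Z_{j,l}$ themselves (for instance $\langle x,\nabla Z_{z_1,\lambda}\rangle$ is essentially $-\lambda\,Z_{1,1}$ up to lower-order and cut-off terms, and $\partial u_m/\partial x_i$ is essentially $\sum_j Z_{j,i}$). Therefore the matrix entries reduce, up to $o(1)$ relative errors, to the quantities already evaluated in \eqref{c8}–\eqref{c81}, namely
$$
\sum_{j=1}^{m}\langle E_{j,l},Z_{1,t}\rangle=(\widehat{c}+o(1))\,\delta_{tl}\,\lambda^{n_l}\lambda^{n_t},
$$
for some $\widehat{c}>0$. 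After factoring out the harmless nonzero scalings $\lambda^{n_l}$, the coefficient matrix is $(\widehat c+o(1))\,\mathrm{Id}_N$, hence invertible for $m$ large, and the homogeneous system yields $c_l=0$ for all $l$. The orthogonality condition in \eqref{c14} (third line) is what guarantees that the cross terms involving $\phi$ do not contaminate the diagonal structure: $\langle E_{j,l},\phi\rangle$-type contributions are $o(\lambda^{n_l}\lambda^{n_t})$ by the estimates used in the proof of Lemma \ref{C1} (cf. \eqref{c61}, \eqref{c62}), and the region $D_\rho$ being away from the concentration point $(r_0,x_0'')$ makes the difference between integrating over $D_\rho$ and over $\mathbb{R}^N$ exponentially small in $\lambda$ for the bubble terms.

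The main obstacle I anticipate is controlling the error coming from \emph{restricting} the Pohozaev integrals to the bounded domain $D_\rho$ rather than all of $\mathbb{R}^N$, together with the boundary terms this restriction would a priori generate; the saving grace is precisely that $D_\rho\subset\{|(r,x'')-(r_0,x_0'')|\le 5\delta\}$ sits at fixed distance from the bubbles' common location, so each $Z_{z_j,\lambda}$ and its derivatives are of size $O(\lambda^{-(N-2)/2})$ there, making all such corrections $O(\lambda^{-c})$ for some $c>0$ — negligible against the diagonal entries of order $\lambda^{2n_l}$. A secondary technical point is handling the cut-off $\xi$ when relating $\langle x,\nabla u_m\rangle$ and $\partial_{x_i}u_m$ to the $Z_{j,l}$; the terms where a derivative hits $\xi$ are supported in $\{\delta\le|(r,x'')-(r_0,x_0'')|\le 2\delta\}$, again away from the bubbles, and are absorbed into the $o(1)$. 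Once these estimates are in place, invertibility of $(\widehat c+o(1))\mathrm{Id}_N$ closes the argument.
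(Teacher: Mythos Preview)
Your overall strategy---substitute the right-hand side of \eqref{c14} into \eqref{d1}--\eqref{d3} and show the resulting $N\times N$ linear system in $c_1,\dots,c_N$ has invertible coefficient matrix---is exactly what the paper does. Two concrete errors, however, would derail your execution.

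First, you have the geometry of $D_\rho$ backwards. The set $D_\rho=\{|(r,x'')-(r_0,x_0'')|\le\rho\}$ is a neighbourhood \emph{of} the concentration point, not away from it; the bubbles $Z_{z_j,\lambda}$ peak inside $D_\rho$ at size $\lambda^{(N-2)/2}$, not $\lambda^{-(N-2)/2}$. The correct reason why passing from $\int_{D_\rho}$ to $\int_{\mathbb{R}^N}$ costs nothing is that the cutoff $\xi$ has support in $\{|(r,x'')-(r_0,x_0'')|\le 2\delta\}\subset D_\rho$, so every $E_{j,l}$ is identically zero outside $D_\rho$; this is the one-line observation the paper opens with.

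Second, and more seriously, your identification $\langle x,\nabla Z_{z_1,\lambda}\rangle\approx -\lambda Z_{1,1}$ is wrong and would make your matrix singular. Writing $\langle x,\nabla\rangle=\langle x',\nabla_{x'}\rangle+\langle x'',\nabla_{x''}\rangle$ and using \eqref{d2} to strip off the $x''$-part, the test $\langle x,\nabla u_m\rangle$ isolates the $\overline r$-direction (index $l=2$), not the $\lambda$-direction (index $l=1$); see the paper's \eqref{d5} and \eqref{d9}. If $\langle x,\nabla u_m\rangle$ and $\partial_\lambda Z_{\overline r,\overline x'',\lambda}$ were both probing $l=1$ as you claim, no equation would see $c_2$ and the system would be degenerate. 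Moreover the scalings $n_1=-1$ versus $n_l=1$ for $l\ge 2$ mean the matrix is \emph{not} a perturbation of a multiple of the identity: the paper handles this by first deducing $c_i=o(\lambda^{-2})c_1$ for $i\ge 2$ from \eqref{d1}--\eqref{d2}, and only then using \eqref{d3} to kill $c_1$.
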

\begin{proof}
Since $Z_{\overline{r},\overline{x}'',\lambda}=0$ in $\mathbb{R}^{N}\backslash D_{\rho}$, we see that if \eqref{d1}-\eqref{d3} hold, then
\begin{equation}\label{d4}
	\sum_{l=1}^{N}c_{l}\sum_{j=1}^{m}\int_{\mathbb{R}^{N}}\Big[(2^{\ast}_{\alpha}-1)\Big(|x|^{-\alpha}\ast |Z_{z_j,\lambda}|^{2^{\ast}_{\alpha}}\Big)Z_{z_j,\lambda}^{2^{\ast}_{\alpha}-2}Z_{j,l} +2^{\ast}_{\alpha}\Big(|x|^{-\alpha}\ast (Z_{z_j,\lambda}^{2^{\ast}_{\alpha}-1}
	Z_{j,l})\Big)Z_{z_j,\lambda}^{2^{\ast}_{\alpha}-1}\Big]vdx=0,
\end{equation}
for $v=\langle x,\nabla u_{m}\rangle$, $\frac{\partial u_{m}}{\partial x_{i}}, i=3,\cdot\cdot\cdot,N$ and $\frac{\partial Z_{\overline{r},\overline{x}'',\lambda}}{\partial \lambda}$.

Direct calculations show that
$$\aligned
&\left|\int_{\mathbb{R}^{N}}\Big(|x|^{-\alpha}\ast |Z_{z_j,\lambda}|^{2^{\ast}_{\alpha}}\Big)Z_{z_j,\lambda}^{2^{\ast}_{\alpha}-2}Z_{j,i}
\frac{\partial Z_{z_l,\lambda}}{\partial x_{i}}dx\right|\\
&\hspace{4mm}\leq C\lambda^{2}\int_{\mathbb{R}^{N}}\frac{1}{(1+|x-\lambda z_{j} |^{2})^{2}}\frac{(x_{i}-\lambda\overline{x}_{i})^{2}}{(1+|x-\lambda z_{j} |^{2})^{\frac{N}{2}}}
\frac{1}{(1+|x-\lambda z_{l} |^{2})^{\frac{N}{2}}}dx,
\endaligned$$
where $ i=3,\cdot\cdot\cdot,N$ and $(\overline{x}_{3}, \overline{x}_{4}, \cdot\cdot\cdot,\overline{x}_{N})=\overline{x}$.
If $l=j$, we have
$$
\left|\int_{\mathbb{R}^{N}}\Big(|x|^{-\alpha}\ast |Z_{z_j,\lambda}|^{2^{\ast}_{\alpha}}\Big)Z_{z_j,\lambda}^{2^{\ast}_{\alpha}-2}Z_{j,i}
\frac{\partial Z_{z_l,\lambda}}{\partial x_{i}}dx\right|=O (\lambda^{2}).
$$
If $l\neq j$, similar to the arguments in the proof of Lemma \ref{C5}, we can prove that
$$
\left|\int_{\mathbb{R}^{N}}\Big(|x|^{-\alpha}\ast |Z_{z_j,\lambda}|^{2^{\ast}_{\alpha}}\Big)Z_{z_j,\lambda}^{2^{\ast}_{\alpha}-2}Z_{j,i}
\left(\sum_{l=1,\neq j}^{m}\frac{\partial Z_{z_l,\lambda}}{\partial x_{i}}\right)dx\right|=O (\lambda^{2-\varepsilon}),
$$
for some $\varepsilon> 0$. So we get
\begin{equation}\label{d6}
	\aligned
	&\sum_{j=1}^{m}\int_{\mathbb{R}^{N}}\Big[(2^{\ast}_{\alpha}-1)\Big(|x|^{-\alpha}\ast |Z_{z_j,\lambda}|^{2^{\ast}_{\alpha}}\Big)Z_{z_j,\lambda}^{2^{\ast}_{\alpha}-2}Z_{j,i}
	+2^{\ast}_{\alpha}\Big(|x|^{-\alpha}\ast (Z_{z_j,\lambda}^{2^{\ast}_{\alpha}-1}
	Z_{j,i})\Big)Z_{z_j,\lambda}^{2^{\ast}_{\alpha}-1}\Big]\frac{\partial Z_{\overline{r},\overline{x}'',\lambda}}{\partial x_{i}}dx\\
	&\hspace{8mm}=m(a_{1}+o(1))\lambda^{2}, \ i=3,\cdot\cdot\cdot, N,
	\endaligned
\end{equation}
for some constants $a_{1}\neq0$. Similarly, we have
\begin{equation}\label{d5}
	\aligned
	&\sum_{j=1}^{m}\int_{\mathbb{R}^{N}}\Big[(2^{\ast}_{\alpha}-1)\Big(|x|^{-\alpha}\ast |Z_{z_j,\lambda}|^{2^{\ast}_{\alpha}}\Big)Z_{z_j,\lambda}^{2^{\ast}_{\alpha}-2}Z_{j,2}+2^{\ast}_{\alpha}\Big(|x|^{-\alpha}\ast (Z_{z_j,\lambda}^{2^{\ast}_{\alpha}-1}
	Z_{j,2})\Big)Z_{z_j,\lambda}^{2^{\ast}_{\alpha}-1} \Big]\langle x',\nabla_{x'} Z_{\overline{r},\overline{x}'',\lambda}\rangle dx\\
	&\hspace{8mm}=m(a_{2}+o(1))\lambda^{2},
	\endaligned
\end{equation}
and
\begin{equation}\label{d7}
	\aligned
	&\sum_{j=1}^{m}\int_{\mathbb{R}^{N}}\Big[(2^{\ast}_{\alpha}-1)\Big(|x|^{-\alpha}\ast |Z_{z_j,\lambda}|^{2^{\ast}_{\alpha}}\Big)Z_{z_j,\lambda}^{2^{\ast}_{\alpha}-2}Z_{j,1} +2^{\ast}_{\alpha}\Big(|x|^{-\alpha}\ast (Z_{z_j,\lambda}^{2^{\ast}_{\alpha}-1}
	Z_{j,1})\Big)Z_{z_j,\lambda}^{2^{\ast}_{\alpha}-1}\Big]\frac{\partial Z_{\overline{r},\overline{y}'',\lambda}}{\partial \lambda}dx\\
	&\hspace{8mm}=\frac{m}{\lambda^{2}}(a_{3}+o(1)),
	\endaligned
\end{equation}
for some constants $a_{2}\neq0$ and $a_3>0$.

\noindent By inspection 
$$\aligned
&\int_{\mathbb{R}^{N}}\Big(|x|^{-\alpha}\ast |Z_{z_j,\lambda}|^{2^{\ast}_{\alpha}}\Big)Z_{z_j,\lambda}^{2^{\ast}_{\alpha}-2}Z_{j,1}\frac{\partial \phi_{\overline{r},\overline{x}'',\lambda}}{\partial x_{i}}dx\\
=&\int_{\mathbb{R}^N}\int_{\mathbb{R}^N}
\frac{|Z_{z_j,\lambda}(y)|^{2^{\ast}_{\alpha}}\big(Z_{z_j,\lambda}^{2^{\ast}_{\alpha}-2}(x)
	\frac{\partial Z_{j,1}}{\partial x_{i}}(x)+\frac{\partial{Z_{z_j,\lambda}^{2^{\ast}_{\alpha}-2}}}{\partial x_{i}}(x)
	Z_{j,1}(x)\big)\phi_{\overline{r},\overline{x}'',\lambda}(x)}{|x-y|^{\alpha}}dxdy\\
&\hspace{8mm}+\int_{\mathbb{R}^N}\int_{\mathbb{R}^N}
\frac{|Z_{z_j,\lambda}(y)|^{2^{\ast}_{\alpha}}(x_{i}-y_{i})Z_{z_j,\lambda}^{2^{\ast}_{\alpha}-2}(x)
	Z_{j,1}(x)\phi_{\overline{r},\overline{x}'',\lambda}(y)}{|x-y|^{\alpha+2}}dxdy,
\endaligned$$
where $j=1,2,\cdot\cdot\cdot,m$ and $i=3,\cdot\cdot\cdot,N$. By \eqref{c15} we obtain
$$\aligned
&\left|\int_{\mathbb{R}^N}\int_{\mathbb{R}^N}
\frac{|Z_{z_j,\lambda}(y)|^{2^{\ast}_{\alpha}}Z_{z_j,\lambda}^{2^{\ast}_{\alpha}-2}(x)
	\frac{\partial Z_{j,1}}{\partial x_{i}}(x)\phi_{\overline{r},\overline{x}'',\lambda}(x)}{|x-y|^{\alpha}}dxdy\right|\\
\leq &C\|\phi\|_{\ast}\int_{\mathbb{R}^N}\frac{\lambda^{\frac{\alpha}{2}}}{(1+\lambda^{2}|x-z_{j} |^{2})^{\frac{4-\alpha}{2}}}\frac{\lambda^{\frac{4-\alpha}{2}}}{(1+\lambda^{2}|x-z_{j} |^{2})^{\frac{\alpha}{2}}}\sum_{k=1}^{m} \frac{\lambda^{\frac{N-2}{2}}}{(1+\lambda|x-z_{k}|)^{\frac{N-2}{2}+\tau}}\frac{\lambda^{\frac{N}{2}}|x-z_{j} |}{(1+\lambda^{2}|x-z_{j} |^{2})^{\frac{N}{2}}}dx\\
\leq &C\|\phi\|_{\ast}\int_{\mathbb{R}^N}\frac{|x-\lambda z_{j} |}{(1+|x-\lambda z_{j} |^{2})^{\frac{N+4}{2}}}\sum_{k=1}^{m}\frac{1}{(1+|x-\lambda z_{k} |)^{\frac{N-2}{2}+\tau}}dx\\
\leq &Cm\|\phi\|_{\ast}\int_{\Omega_{1}}\frac{1}{(1+|x-\lambda z_{1} |)^{N+3}}\frac{1}{(1+|x-\lambda z_{1} |)^{\frac{N-2}{2}}}dx\\
=&Cm\|\phi\|_{\ast}=O(\frac{1}{\lambda^{\varepsilon}}).
\endaligned$$
Similarly, we have
$$\aligned
&\left|\int_{\mathbb{R}^N}\int_{\mathbb{R}^N}
\frac{|Z_{z_j,\lambda}(y)|^{2^{\ast}_{\alpha}}\frac{\partial Z_{z_j,\lambda}^{2^{\ast}_{\alpha}-2}}{\partial x_{i}}(x)
	Z_{j,1}(x)\phi_{\overline{r},\overline{x}'',\lambda}(x)}{|x-y|^{\alpha}}dxdy\right|=O(\frac{1}{\lambda^{\varepsilon}}),
\endaligned$$
and
$$
\int_{\mathbb{R}^N}\int_{\mathbb{R}^N}
\frac{|Z_{z_j,\lambda}(x)|^{2^{\ast}_{\alpha}}
	(x_{i}-y_{i})Z_{z_j,\lambda}^{2^{\ast}_{\alpha}-2}(x)Z_{j,1}(y)\phi_{\overline{r},\overline{x}'',\lambda}(y)}{|x-y|^{\alpha+2}}dxdy=O(\frac{1}{\lambda^{\varepsilon}}).
$$
The above three equalities imply that
$$
\int_{\mathbb{R}^{N}}\Big(|x|^{-\alpha}\ast |Z_{z_j,\lambda}|^{2^{\ast}_{\alpha}}\Big)Z_{z_j,\lambda}^{2^{\ast}_{\alpha}-2}Z_{j,1}\frac{\partial \phi_{\overline{r},\overline{x}'',\lambda}}{\partial x_{i}}dx=O(\frac{1}{\lambda^{\varepsilon}}).
$$
Similarly, we have
$$
\int_{\mathbb{R}^{N}}\Big(|x|^{-\alpha}\ast (Z_{z_j,\lambda}^{2^{\ast}_{\alpha}-1}
Z_{j,1})\Big)Z_{z_j,\lambda}^{2^{\ast}_{\alpha}-1}\frac{\partial \phi_{\overline{r},\overline{x}'',\lambda}}{\partial x_{i}}dx=O(\frac{1}{\lambda^{\varepsilon}}).
$$
Therefore, we conclude that
\begin{multline*}
c_{1}\sum_{j=1}^{m}\int_{\mathbb{R}^{N}}\Big[(2^{\ast}_{\alpha}-1)\Big(|x|^{-\alpha}\ast |Z_{z_j,\lambda}|^{2^{\ast}_{\alpha}}\Big)Z_{z_j,\lambda}^{2^{\ast}_{\alpha}-2}Z_{j,1}+2^{\ast}_{\alpha}\Big(|x|^{-\alpha}\ast (Z_{z_j,\lambda}^{2^{\ast}_{\alpha}-1}
Z_{j,1})\Big)Z_{z_j,\lambda}^{2^{\ast}_{\alpha}-1}\Big]\frac{\partial \phi_{\overline{r},\overline{x}'',\lambda}}{\partial x_{i}}dx\\
=o(m|c_{1}|).
\end{multline*}
Likewise
\begin{multline*}
\sum_{l=2}^{N}c_{l}\sum_{j=1}^{m}\int_{\mathbb{R}^{N}}\Big[(2^{\ast}_{\alpha}-1)\Big(|x|^{-\alpha}\ast |Z_{z_j,\lambda}|^{2^{\ast}_{\alpha}}\Big)Z_{z_j,\lambda}^{2^{\ast}_{\alpha}-2}Z_{j,l}+2^{\ast}_{\alpha}\Big(|x|^{-\alpha}\ast (Z_{z_j,\lambda}^{2^{\ast}_{\alpha}-1}
Z_{j,l})\Big)Z_{z_j,\lambda}^{2^{\ast}_{\alpha}-1}\Big]\frac{\partial \phi_{\overline{r},\overline{x}'',\lambda}}{\partial x_{i}}dx\\
=o(m\lambda^{2})\sum_{l=2}^{N}|c_{l}|,
\end{multline*}
and hence 
$$\aligned
&\sum_{l=1}^{N}c_{l}\sum_{j=1}^{m}\int_{\mathbb{R}^{N}}\Big[(2^{\ast}_{\alpha}-1)\Big(|x|^{-\alpha}\ast |Z_{z_j,\lambda}|^{2^{\ast}_{\alpha}}\Big)Z_{z_j,\lambda}^{2^{\ast}_{\alpha}-2}Z_{j,l}+2^{\ast}_{\alpha}\Big(|x|^{-\alpha}\ast (Z_{z_j,\lambda}^{2^{\ast}_{\alpha}-1}
Z_{j,l})\Big)Z_{z_j,\lambda}^{2^{\ast}_{\alpha}-1}\Big]\frac{\partial \phi_{\overline{r},\overline{x}'',\lambda}}{\partial x_{i}}dx\\
&\hspace{8mm}=o(m\lambda^{2})\sum_{l=2}^{N}|c_{l}|+o(m|c_{1}|),
\endaligned$$
where $i=3,\cdot\cdot\cdot,N$. Through the same arguments, we also have
$$\aligned
&\sum_{l=1}^{N}c_{l}\sum_{j=1}^{m}\int_{\mathbb{R}^{N}}\Big[(2^{\ast}_{\alpha}-1)\Big(|x|^{-\alpha}\ast |Z_{z_j,\lambda}|^{2^{\ast}_{\alpha}}\Big)Z_{z_j,\lambda}^{2^{\ast}_{\alpha}-2}Z_{j,l}+2^{\ast}_{\alpha}\Big(|x|^{-\alpha}\ast (Z_{z_j,\lambda}^{2^{\ast}_{\alpha}-1}
Z_{j,l})\Big)Z_{z_j,\lambda}^{2^{\ast}_{\alpha}-1}\Big]\langle x,\nabla \phi_{\overline{r},\overline{x}'',\lambda}\rangle dx\\
&\hspace{8mm}=o(m\lambda^{2})\sum_{l=2}^{N}|c_{l}|+o(m|c_{1}|).
\endaligned$$
Together with \eqref{d4}, we can now deduce that
\begin{equation}\label{d8}
	\aligned
	&\sum_{l=1}^{N}c_{l}\sum_{j=1}^{m}\int_{\mathbb{R}^{N}}\Big[(2^{\ast}_{\alpha}-1)\Big(|x|^{-\alpha}\ast |Z_{z_j,\lambda}|^{2^{\ast}_{\alpha}}\Big)Z_{z_j,\lambda}^{2^{\ast}_{\alpha}-2}Z_{j,l}+2^{\ast}_{\alpha}\Big(|x|^{-\alpha}\ast (Z_{z_j,\lambda}^{2^{\ast}_{\alpha}-1}
	Z_{j,l})\Big)Z_{z_j,\lambda}^{2^{\ast}_{\alpha}-1}\Big]vdx\\
	&\hspace{8mm}=o(m\lambda^{2})\sum_{l=2}^{N}|c_{l}|+o(m|c_{1}|)
	\endaligned
\end{equation}
holds for $v=\langle x,\nabla Z_{\overline{r},\overline{x}'',\lambda}\rangle$, $\frac{\partial Z_{\overline{r},\overline{x}'',\lambda}}{\partial x_{i}}$, $ i=3,\cdot\cdot\cdot,N$.

\noindent From
$$
\langle x,\nabla Z_{\overline{r},\overline{x}'',\lambda}\rangle=\langle x',\nabla_{x'} Z_{\overline{r},\overline{x}'',\lambda}\rangle+\langle x'',\nabla_{x''} Z_{\overline{r},\overline{x}'',\lambda}\rangle,
$$
we have 
\begin{multline}\label{d9}
	\sum_{l=1}^{N}c_{l}\sum_{j=1}^{m}\int_{\mathbb{R}^{N}}\Big[(2^{\ast}_{\alpha}-1)\Big(|x|^{-\alpha}\ast |Z_{z_j,\lambda}|^{2^{\ast}_{\alpha}}\Big)Z_{z_j,\lambda}^{2^{\ast}_{\alpha}-2}Z_{j,l}+2^{\ast}_{\alpha}\Big(|x|^{-\alpha}\ast (Z_{z_j,\lambda}^{2^{\ast}_{\alpha}-1}
	Z_{j,l})\Big)Z_{z_j,\lambda}^{2^{\ast}_{\alpha}-1}\Big]\langle x,\nabla Z_{\overline{r},\overline{x}'',\lambda}\rangle dx\\
	=c_{2}\sum_{j=1}^{m}\int_{\mathbb{R}^{N}}\Big[(2^{\ast}_{\alpha}-1)\Big(|x|^{-\alpha}\ast |Z_{z_j,\lambda}|^{2^{\ast}_{\alpha}}\Big)Z_{z_j,\lambda}^{2^{\ast}_{\alpha}-2}Z_{j,l}+2^{\ast}_{\alpha}\Big(|x|^{-\alpha}\ast (Z_{z_j,\lambda}^{2^{\ast}_{\alpha}-1}
	Z_{j,l})\Big)Z_{z_j,\lambda}^{2^{\ast}_{\alpha}-1}\Big]\langle x',\nabla_{x'} Z_{\overline{r},\overline{x}'',\lambda}\rangle dx\\
+o(m\lambda^{2})\sum_{l=3}^{N}|c_{l}|+o(m|c_{1}|)
\end{multline}
and
\begin{equation}\label{d10}
	\aligned
	&\sum_{l=1}^{N}c_{l}\sum_{j=1}^{m}\int_{\mathbb{R}^{N}}\Big[(2^{\ast}_{\alpha}-1)\Big(|x|^{-\alpha}\ast |Z_{z_j,\lambda}|^{2^{\ast}_{\alpha}}\Big)Z_{z_j,\lambda}^{2^{\ast}_{\alpha}-2}Z_{j,l}+2^{\ast}_{\alpha}\Big(|x|^{-\alpha}\ast (Z_{z_j,\lambda}^{2^{\ast}_{\alpha}-1}
	Z_{j,l})\Big)Z_{z_j,\lambda}^{2^{\ast}_{\alpha}-1}\Big]\frac{\partial Z_{\overline{r},\overline{x}'',\lambda}}{\partial x_{i}}dx\\
	&\hspace{2mm}=c_{i}\sum_{j=1}^{m}\int_{\mathbb{R}^{N}}\Big[(2^{\ast}_{\alpha}-1)\Big(|x|^{-\alpha}\ast |Z_{z_j,\lambda}|^{2^{\ast}_{\alpha}}\Big)Z_{z_j,\lambda}^{2^{\ast}_{\alpha}-2}Z_{j,l}+2^{\ast}_{\alpha}\Big(|x|^{-\alpha}\ast (Z_{z_j,\lambda}^{2^{\ast}_{\alpha}-1}
	Z_{j,l})\Big)Z_{z_j,\lambda}^{2^{\ast}_{\alpha}-1}\Big]\frac{\partial Z_{\overline{r},\overline{x}'',\lambda}}{\partial x_{i}}dx\\
	&\hspace{4mm}+o(m\lambda^{2})\sum_{l\neq1,i}^{N}|c_{l}|+o(m|c_{1}|), \ i=3,\cdot\cdot\cdot, N.
	\endaligned
\end{equation}

Combining \eqref{d8}-\eqref{d10}, we obtain
$$\aligned
&c_{2}\sum_{j=1}^{m}\int_{\mathbb{R}^{N}}\Big[(2^{\ast}_{\alpha}-1)\Big(|x|^{-\alpha}\ast |Z_{z_j,\lambda}|^{2^{\ast}_{\alpha}}\Big)Z_{z_j,\lambda}^{2^{\ast}_{\alpha}-2}Z_{j,l}+2^{\ast}_{\alpha}\Big(|x|^{-\alpha}\ast (Z_{z_j,\lambda}^{2^{\ast}_{\alpha}-1}
Z_{j,l})\Big)Z_{z_j,\lambda}^{2^{\ast}_{\alpha}-1}\Big]\langle x',\nabla_{x'} Z_{\overline{r},\overline{x}'',\lambda}\rangle dx\\
&\hspace{8mm}=o(m\lambda^{2})\sum_{l=3}^{N}|c_{l}|+o(m|c_{1}|),
\endaligned$$
and
$$\aligned
&c_{i}\sum_{j=1}^{m}\int_{\mathbb{R}^{N}}\Big[(2^{\ast}_{\alpha}-1)\Big(|x|^{-\alpha}\ast |Z_{z_j,\lambda}|^{2^{\ast}_{\alpha}}\Big)Z_{z_j,\lambda}^{2^{\ast}_{\alpha}-2}Z_{j,l}+2^{\ast}_{\alpha}\Big(|x|^{-\alpha}\ast (Z_{z_j,\lambda}^{2^{\ast}_{\alpha}-1}
Z_{j,l})\Big)Z_{z_j,\lambda}^{2^{\ast}_{\alpha}-1}\Big]\frac{\partial Z_{\overline{r},\overline{x}'',\lambda}}{\partial x_{i}} dx\\
&\hspace{8mm}=o(m\lambda^{2})\sum_{l\neq1,i}^{N}|c_{l}|+o(m|c_{1}|), \ i=3,\cdot\cdot\cdot, N,
\endaligned$$
which, together with \eqref{d6} and \eqref{d5}, implies that
\begin{equation}\label{ci}
	c_{i}=o(\frac{1}{\lambda^{2}})c_{1}, \ i=2,\cdot\cdot\cdot, N.
\end{equation}

\noindent Finally we have
\begin{multline*}
0=\sum_{l=1}^{N}c_{l}\sum_{j=1}^{m}\int_{\mathbb{R}^{N}}\Big[(2^{\ast}_{\alpha}-1)\Big(|x|^{-\alpha}\ast |Z_{z_j,\lambda}|^{2^{\ast}_{\alpha}}\Big)Z_{z_j,\lambda}^{2^{\ast}_{\alpha}-2}Z_{j,l}+2^{\ast}_{\alpha}\Big(|x|^{-\alpha}\ast (Z_{z_j,\lambda}^{2^{\ast}_{\alpha}-1}
Z_{j,l})\Big)Z_{z_j,\lambda}^{2^{\ast}_{\alpha}-1}\Big]\frac{\partial Z_{\overline{r},\overline{x}'',\lambda}}{\partial \lambda} dx\\
=c_{1}\sum_{j=1}^{m}\int_{\mathbb{R}^{N}}\Big[(2^{\ast}_{\alpha}-1)\Big(|x|^{-\alpha}\ast |Z_{z_j,\lambda}|^{2^{\ast}_{\alpha}}\Big)Z_{z_j,\lambda}^{2^{\ast}_{\alpha}-2}Z_{j,l}+2^{\ast}_{\alpha}\Big(|x|^{-\alpha}\ast (Z_{z_j,\lambda}^{2^{\ast}_{\alpha}-1}
Z_{j,l})\Big)Z_{z_j,\lambda}^{2^{\ast}_{\alpha}-1}\Big]\frac{\partial Z_{\overline{r},\overline{x}'',\lambda}}{\partial \lambda} dx\\
+o(\frac{m}{\lambda^{2}})c_{1}
=\frac{m}{\lambda^{2}}(a_{3}+o(1))c_{1}+o(\frac{m}{\lambda^{2}})c_{1}.
\end{multline*}
hence  $c_1=0$.
\end{proof}

\begin{lem}\label{D3}
The following holds 
$$\aligned
&\int_{\mathbb{R}^{N}}(-\Delta u_{m}
-K(r,x'')\Big(|x|^{-\alpha}\ast K(r,x'')|u_{m}|^{2^{\ast}_{\alpha}}\Big)u_{m}^{2^{\ast}_{\alpha}-1})\frac{\partial Z_{\overline{r},\overline{x}'',\lambda}}{\partial \lambda}dx\\
=&m\Big(-\frac{A_{1}}{\lambda^{3}}+\sum_{j=2}^{m}
\frac{A_{2}}{\lambda^{N-1}|z_{1}-z_{j}|^{N-2}}+O(\frac{1}{\lambda^{3+\varepsilon}})\Big)\\
=&m\Big(
-\frac{A_1}{\lambda^{3}}
+\frac{m^{N-2}A_3}{\lambda^{N-1}}+O(\frac{1}{\lambda^{3+\varepsilon}})\Big).
\endaligned$$
\end{lem}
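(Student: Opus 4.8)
The plan is to reduce the left-hand side to the already-computed quantity $\frac{\partial J(Z_{\overline{r},\overline{x}'',\lambda})}{\partial\lambda}$ of Lemma \ref{D2}, plus error terms coming from the correction $\phi_{\overline{r},\overline{x}'',\lambda}$. First I would write $u_m=Z_{\overline{r},\overline{x}'',\lambda}+\phi_{\overline{r},\overline{x}'',\lambda}$ and split the integrand accordingly. The part involving only $Z_{\overline{r},\overline{x}'',\lambda}$ is exactly
$$
\int_{\mathbb{R}^N}\Big(-\Delta Z_{\overline{r},\overline{x}'',\lambda}-K(r,x'')\big(|x|^{-\alpha}\ast K(r,x'')|Z_{\overline{r},\overline{x}'',\lambda}|^{2^{\ast}_{\alpha}}\big)Z_{\overline{r},\overline{x}'',\lambda}^{2^{\ast}_{\alpha}-1}\Big)\frac{\partial Z_{\overline{r},\overline{x}'',\lambda}}{\partial\lambda}\,dx
=\frac{\partial J(Z_{\overline{r},\overline{x}'',\lambda})}{\partial\lambda},
$$
since differentiating $J$ at $Z_{\overline{r},\overline{x}'',\lambda}$ in the $\lambda$ direction produces precisely this pairing (note $\partial_\lambda|u|^{2^{\ast}_{\alpha}}=2^{\ast}_{\alpha}|u|^{2^{\ast}_{\alpha}-2}u\,\partial_\lambda u$ and the convolution kernel is symmetric, which is why the factor $\tfrac{1}{2\cdot 2^{\ast}_{\alpha}}$ becomes $1$). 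By Lemma \ref{D2} this equals $m\big(-A_1\lambda^{-3}+\sum_{j\ge 2}A_2(\lambda^{N-1}|z_1-z_j|^{N-2})^{-1}+O(\lambda^{-3-\varepsilon})\big)$.

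The remaining terms all carry a factor of $\phi_{\overline{r},\overline{x}'',\lambda}$, and I would show each is $O(m\lambda^{-3-\varepsilon})$, which is absorbed into the error. Concretely, the difference between the full integrand and the $Z$-only piece consists of (i) the linear term $\langle -\Delta\phi,\partial_\lambda Z_{\overline{r},\overline{x}'',\lambda}\rangle$ minus the linearized nonlocal operator applied to $\phi$ and tested against $\partial_\lambda Z_{\overline{r},\overline{x}'',\lambda}$, and (ii) the quadratic-and-higher remainder $N(\phi)$ (in the notation of Lemma \ref{C4}) tested against $\partial_\lambda Z_{\overline{r},\overline{x}'',\lambda}$. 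For (i), I would use that $\phi=\phi_{\overline{r},\overline{x}'',\lambda}$ solves \eqref{c14}: its defining equation says the linearized operator applied to $Z+\phi$ equals a combination $\sum_l c_l\sum_j(\cdots)Z_{j,l}$, and pairing this against $\partial_\lambda Z_{\overline{r},\overline{x}'',\lambda}$ we may invoke Lemma \ref{D1} to kill the $c_l$ contributions (since $c_l=0$) — or, more directly, use the orthogonality condition in \eqref{c14} together with the estimate $\|\phi\|_{\ast}\le C\lambda^{-1-\varepsilon}$ from Lemma \ref{C3} and the weighted-norm bounds for $\partial_\lambda Z_{\overline{r},\overline{x}'',\lambda}$ (which is of size $\lambda^{-1}$ times a function in the $\|\cdot\|_{\ast}$-type class). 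For (ii), Lemma \ref{C4} gives $\|N(\phi)\|_{\ast\ast}\le C\|\phi\|_{\ast}^2\le C\lambda^{-2-2\varepsilon}$, and pairing against $\partial_\lambda Z_{\overline{r},\overline{x}'',\lambda}$ (using $|\langle h, Z_{1,t}\rangle|\le C\lambda^{n_t}\|h\|_{\ast\ast}$ with $n_1=-1$ from Lemma 2.1 in \cite{PWY}, summed over the $m$ bumps) yields $O(m\lambda^{-1}\cdot\lambda^{-2-2\varepsilon})=O(m\lambda^{-3-\varepsilon})$.

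Finally, for the second displayed equality I would invoke the standard estimate (from \cite{WY1}, used already implicitly) that, for $\lambda\sim m^{(N-2)/(N-4)}$ and the $z_j$ equally spaced on a circle of fixed radius, $\sum_{j=2}^{m}\frac{1}{|z_1-z_j|^{N-2}}=(B_0+o(1))\,m^{N-2}$ for an explicit positive constant $B_0$ depending only on $N$; setting $A_3:=A_2 B_0$ converts $\sum_{j\ge 2}A_2(\lambda^{N-1}|z_1-z_j|^{N-2})^{-1}$ into $m^{N-2}A_3\lambda^{-(N-1)}+O(m^{N-2}\lambda^{-(N-1)-\varepsilon})$, and since the choice of $\lambda$ makes $m^{N-2}\lambda^{-(N-1)}$ and $\lambda^{-3}$ comparable in order, the error $O(m^{N-2}\lambda^{-(N-1)-\varepsilon})$ folds into $O(\lambda^{-3-\varepsilon})$. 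I expect the main obstacle to be bookkeeping in step (i): controlling the pairing of the linearized nonlocal operator (three convolution terms with the non-constant potential $K$) against $\partial_\lambda Z_{\overline{r},\overline{x}'',\lambda}$ with enough decay in $\lambda$, which requires carefully re-running the cut-off and Hardy–Littlewood–Sobolev estimates of Section 2 (as in \eqref{c61}, \eqref{c62}, \eqref{c63}) but now with $\partial_\lambda Z_{\overline{r},\overline{x}'',\lambda}$ in place of $Z_{1,t}$ and summing the contributions over all $m$ bumps without losing the extra $\lambda^{-\varepsilon}$.
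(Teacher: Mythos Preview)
Your overall architecture is correct and matches the paper: decompose the left-hand side as $\langle J'(Z_{\overline r,\overline x'',\lambda}),\partial_\lambda Z_{\overline r,\overline x'',\lambda}\rangle$ plus a linear-in-$\phi$ piece $mI_1$ plus the quadratic remainder $-I_2=-\int N(\phi)\,\partial_\lambda Z$, then invoke Lemma~\ref{D2} for the main term and bound $I_1,I_2$ by $O(\lambda^{-3-\varepsilon})$.

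One genuine issue: your first suggestion for handling (i), namely to ``invoke Lemma~\ref{D1} to kill the $c_l$ contributions (since $c_l=0$)'', is circular. Lemma~\ref{D1} concludes $c_l=0$ only under the hypotheses \eqref{d1}--\eqref{d3}, and \eqref{d3} is precisely the equation whose left-hand side Lemma~\ref{D3} is computing; the whole point of Lemma~\ref{D3} is to rewrite \eqref{d3} as a scalar equation in $\lambda$ that one then \emph{solves}. At the stage of proving Lemma~\ref{D3} you have no information that $c_l=0$, only the a~priori size $|c_l|\le C\lambda^{-1-n_l-\varepsilon}$ from \eqref{c15}, and that bound alone, combined with \eqref{d7}, would give only $O(m\lambda^{-2-\varepsilon})$ for the $c_1$ contribution --- not good enough.

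Fortunately your ``main obstacle'' paragraph already names the correct route, and it is in fact not an obstacle: since $\partial_\lambda Z_{\overline r,\overline x'',\lambda}=\sum_{j=1}^m Z_{j,1}$ and $\phi\in H$, the rotational symmetry gives
\[
\Big\langle -\Delta\phi-(2^*_\alpha-1)K(\cdot)\big(|x|^{-\alpha}\!*\!K|Z|^{2^*_\alpha}\big)Z^{2^*_\alpha-2}\phi-2^*_\alpha K(\cdot)\big(|x|^{-\alpha}\!*\!KZ^{2^*_\alpha-1}\phi\big)Z^{2^*_\alpha-1},\ \partial_\lambda Z_{\overline r,\overline x'',\lambda}\Big\rangle
= m I_1,
\]
where $I_1$ is the same expression tested against the single bump $Z_{1,1}$. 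Estimates \eqref{c61}, \eqref{c62}, \eqref{c63} apply verbatim with $t=1$ (so $n_1=-1$) and give $I_1=O(\|\phi\|_\ast\lambda^{-2-\varepsilon})=O(\lambda^{-3-\varepsilon})$; no ``re-running'' or new summation over bumps is needed. This is exactly what the paper does. Your treatment of $I_2$ via Lemma~\ref{C4} and of the final $\sum_{j\ge2}|z_1-z_j|^{-(N-2)}=(B_0+o(1))m^{N-2}$ step is correct.
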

\begin{proof}
$$\aligned
&\int_{\mathbb{R}^{N}}(-\Delta u_{m}-K(r,x'')\Big(|x|^{-\alpha}\ast K(r,x'')|u_{m}|^{2^{\ast}_{\alpha}}\Big)u_{m}^{2^{\ast}_{\alpha}-1})\frac{\partial Z_{\overline{r},\overline{x}'',\lambda}}{\partial \lambda} dx\\
&=\langle J'(Z_{\overline{r},\overline{x}'',\lambda}),\frac{\partial Z_{\overline{r},\overline{x}'',\lambda}}{\partial \lambda}\rangle+m\Big\langle -\Delta \phi
-(2^{\ast}_{\alpha}-1)K(r,x'')\Big(|x|^{-\alpha}\ast K(r,x'')|Z_{\overline{r},\overline{x}'',\lambda}|^{2^{\ast}_{\alpha}}\Big)Z_{\overline{r},\overline{x}'',\lambda}^{2^{\ast}_{\alpha}-2}\phi\\
&-2^{\ast}_{\alpha}K(r,x'')\Big(|x|^{-\alpha}\ast K(r,x'')Z_{\overline{r},\overline{x}'',\lambda}^{2^{\ast}_{\alpha}-1}\phi\Big)Z_{\overline{r},\overline{x}'',\lambda}^{2^{\ast}_{\alpha}-1},\frac{\partial Z_{z_1,\lambda}}{\partial \lambda}\Big\rangle\\
&-\Bigg[\int_{\mathbb{R}^{N}}K(r,x'')\Big(|x|^{-\alpha}\ast K(r,x'')|u_{m}|^{2^{\ast}_{\alpha}}\Big)u_{m}^{2^{\ast}_{\alpha}-1}\frac{\partial Z_{\overline{r},\overline{x}'',\lambda}}{\partial \lambda} dx\\
&-(2^{\ast}_{\alpha}-1)\int_{\mathbb{R}^{N}}K(r,x'')\Big(|x|^{-\alpha}\ast K(r,x'')|Z_{\overline{r},\overline{x}'',\lambda}|^{2^{\ast}_{\alpha}}\Big)Z_{\overline{r},\overline{x}'',\lambda}^{2^{\ast}_{\alpha}-2}\phi\frac{\partial Z_{\overline{r},\overline{x}'',\lambda}}{\partial \lambda} dx\\
&-2^{\ast}_{\alpha}\int_{\mathbb{R}^{N}}K(r,x'')\Big(|x|^{-\alpha}\ast K(r,x'')Z_{\overline{r},\overline{x}'',\lambda}^{2^{\ast}_{\alpha}-1}\phi\Big)Z_{\overline{r},\overline{x}'',\lambda}^{2^{\ast}_{\alpha}-1}\frac{\partial Z_{\overline{r},\overline{x}'',\lambda}}{\partial \lambda} dx\\
&-\int_{\mathbb{R}^{N}}K(r,x'')\Big(|x|^{-\alpha}\ast K(r,x'')|Z_{\overline{r},\overline{x}'',\lambda}|^{2^{\ast}_{\alpha}}\Big)Z_{\overline{r},\overline{x}'',\lambda}^{2^{\ast}_{\alpha}-1}\frac{\partial Z_{\overline{r},\overline{x}'',\lambda}}{\partial \lambda} dx\Bigg]\\
&=:\langle J'(Z_{\overline{r},\overline{x}'',\lambda}),\frac{\partial Z_{\overline{r},\overline{x}'',\lambda}}{\partial \lambda}\rangle+mI_{1}-I_{2}.
\endaligned$$

Using \eqref{c61}-\eqref{c63}, we obtain
$$
I_{1}=O(\frac{\|\phi\|_{\ast}}{\lambda^{2+\varepsilon}})
=O(\frac{1}{\lambda^{3+\varepsilon}}).
$$


By \eqref{c15} we have
$$\aligned
I_{2}
\leq&C\int_{\mathbb{R}^{N}}|N(\phi)|\left|\frac{\partial Z_{\overline{r},\overline{x}'',\lambda}}{\partial \lambda}\right| dx\\
\leq&\frac{C\|\phi\|_{\ast}^{2}}{\lambda}\int_{\mathbb{R}^{N}}\sum_{j=1}^{m}\frac{\lambda^{\frac{N+2}{2}}}{(1+\lambda|x-z_{j}|)^{\frac{N+2}{2}+\tau}}\sum_{j=1}^{m}
\frac{\lambda^{\frac{N-2}{2}}}{(1+\lambda|x-z_{j}|)^{N-2}}dx\\ 
\leq&\frac{C\|\phi\|_{\ast}^{2}}{\lambda}\sum_{j=1}^{m}\int_{\mathbb{R}^{N}}\frac{1}{(1+\lambda|x-z_{j}|)^{\frac{3N-2}{2}}}dx=O(\frac{m}{\lambda^{3+\varepsilon}}).
\endaligned$$
So we can conclude that
$$
\Big\langle J'(Z_{\overline{r},\overline{x}'',\lambda}+\phi),\frac{\partial Z_{\overline{r},\overline{x}'',\lambda}}{\partial \lambda}\Big\rangle
=\Big\langle J'(Z_{\overline{r},\overline{x}'',\lambda}),\frac{\partial Z_{\overline{r},\overline{x}'',\lambda}}{\partial \lambda}\Big\rangle+O(\frac{m}{\lambda^{3+\varepsilon}}).
$$
and thus by Lemma \ref{D2}, we conclude the proof.
\end{proof}

\noindent Observe that \eqref{d1} is equivalent to
\begin{equation}\label{d12}
\begin{split}
	-\frac{N-2}{2}\int_{D_{\rho}}|\nabla u_{m}|^{2}dx&+\frac{1}{2^{\ast}_{\alpha}}\int_{D_{\rho}}\langle x,\nabla K(x)\rangle \Big( \int_{\R^N}\frac{K(y)|u_{m}(y)|^{2^{\ast}_{\alpha}}}{|x-y|^\alpha}dy \Big) u_{m}^{2^{\ast}_{\alpha}}(x)dx\\
	&+\frac{N}{2^{\ast}_{\alpha}}\int_{D_{\rho}}\int_{\R^N}\frac{K(x)K(y)|u_{m}(y)|^{2^{\ast}_{\alpha}}|u_{m}(x)|^{2^{\ast}_{\alpha}}}{|x-y|^\alpha}dxdy\\
	&-\frac{\alpha}{2^{\ast}_{\alpha}} \int_{D_{\rho}}\int_{\R^N}x(x-y)\frac{K(x)K(y)|u_{m}(y)|^{2^{\ast}_{\alpha}}|u_{m}(x)|^{2^{\ast}_{\alpha}}}{|x-y|^{\alpha+2}}dxdy
	\\=&O\Big(\int_{\partial D_{\rho}}|\nabla \phi|^{2}ds
	+\int_{\partial D_{\rho}}\Big(\int_{\R^N} \frac{|\phi(y)|^{2^{\ast}_{\alpha}}}{|x-y|^\alpha}dy\Big)|\phi|^{2^{\ast}_{\alpha}}ds\Big).
\end{split}
\end{equation}

\noindent Since
$$
\sum_{j=1}^{m}\int_{\mathbb{R}^{N}}[(2^{\ast}_{\alpha}-1)\Big(|x|^{-\alpha}\ast |Z_{z_j,\lambda}|^{2^{\ast}_{\alpha}}\Big)Z_{z_j,\lambda}^{2^{\ast}_{\alpha}-2}Z_{j,l}\phi +2^{\ast}_{\alpha}\Big(|x|^{-\alpha}\ast |Z_{z_j,\lambda}^{2^{\ast}_{\alpha}-1}Z_{j,l}|\Big)Z_{z_j,\lambda}^{2^{\ast}_{\alpha}-1}\phi] dx=0,
$$
we obtain from \eqref{c14} that
\begin{equation}\label{d13}
\aligned
&\int_{D_{\rho}}|\nabla u_{m}|^{2}dx=\int_{D_{\rho}}K(x)\Big(|x|^{-\alpha}\ast K(x)|u_{m}|^{2^{\ast}_{\alpha}}\Big)|u_{m}|^{2^{\ast}_{\alpha}} dx+O\Big(\int_{\partial D_{\rho}}|\nabla \phi|^{2} ds\Big)\\
&+\sum_{l=1}^{N}c_{l}\sum_{j=1}^{m}\int_{\mathbb{R}^{N}}\big[(2^{\ast}_{\alpha}-1)\Big(|x|^{-\alpha}\ast |Z_{z_j,\lambda}|^{2^{\ast}_{\alpha}}\Big)Z_{z_j,\lambda}^{2^{\ast}_{\alpha}-2}Z_{j,l}
+2^{\ast}_{\alpha}\Big(|x|^{-\alpha}\ast (Z_{z_j,\lambda}^{2^{\ast}_{\alpha}-1}
Z_{j,l})\Big)Z_{z_j,\lambda}^{2^{\ast}_{\alpha}-1}\big]Z_{\overline{r},\overline{x}'',\lambda}dx.
\endaligned\end{equation}
Inserting \eqref{d13} into \eqref{d12}, we obtain
\begin{equation}\label{d14}
\aligned
\frac{1}{2^{\ast}_{\alpha}}&\int_{D_{\rho}}\langle x,\nabla K(x)\rangle \Big( \int_{\R^N}\frac{K(y)|u_{m}(y)|^{2^{\ast}_{\alpha}}}{|x-y|^\alpha}dy \Big)u_{m}^{2^{\ast}_{\alpha}}\\
=&\frac{N-2}{2}\sum_{l=1}^{N}c_{l}\sum_{j=1}^{m}\int_{\mathbb{R}^{N}}\Big[(2^{\ast}_{\alpha}-1)\Big(|x|^{-\alpha}\ast |Z_{z_j,\lambda}|^{2^{\ast}_{\alpha}}\Big)Z_{z_j,\lambda}^{2^{\ast}_{\alpha}-2}Z_{j,l}+2^{\ast}_{\alpha}\Big(|x|^{-\alpha}\ast Z_{z_j,\lambda}^{2^{\ast}_{\alpha}-1}Z_{j,l}\Big)Z_{z_j,\lambda}^{2^{\ast}_{\alpha}-1}\Big]Z_{\overline{r},\overline{x}'',\lambda}dx\\
&+\frac{\alpha}{2^{\ast}_{\alpha}}\int_{D_{\rho}}\int_{\R^N}x(x-y)\frac{K(x)K(y)|u_{m}(y)|^{2^{\ast}_{\alpha}}|u_{m}(x)|^{2^{\ast}_{\alpha}}}{|x-y|^{\alpha+2}}dxdy\\
&+\big(\frac{N-2}{2}-\frac{N}{2^{\ast}_{\alpha}}\big)\int_{D_{\rho}}\int_{\R^N}\frac{K(x)K(y)|u_{m}(y)|^{2^{\ast}_{\alpha}}|u_{m}(x)|^{2^{\ast}_{\alpha}}}{|x-y|^\alpha}dxdy\\
&+O\Big(\int_{\partial D_{\rho}}|\nabla \phi|^{2}ds
+\int_{\partial D_{\rho}}\Big(\int_{\R^N}\frac{|\phi(y)|^{2}}{|x-y|^\alpha}dy\Big)|\phi|^{2}ds\Big)\\
=&\frac{N-2}{2}\sum_{l=1}^{N}c_{l}\sum_{j=1}^{m}\int_{\mathbb{R}^{N}}\Big[(2^{\ast}_{\alpha}-1)\Big(|x|^{-\alpha}\ast |Z_{z_j,\lambda}|^{2^{\ast}_{\alpha}}\Big)Z_{z_j,\lambda}^{2^{\ast}_{\alpha}-2}Z_{j,l}+2^{\ast}_{\alpha}\Big(|x|^{-\alpha}\ast Z_{z_j,\lambda}^{2^{\ast}_{\alpha}-1}Z_{j,l}\Big)Z_{z_j,\lambda}^{2^{\ast}_{\alpha}-1}\Big]Z_{\overline{r},\overline{x}'',\lambda}dx\\
&+O\Big(\int_{\partial D_{\rho}}|\nabla \phi|^{2}ds
+\int_{\partial D_{\rho}}\Big(\int_{\R^N}\frac{|\phi(y)|^{2^{\ast}_{\alpha}}}{|x-y|^\alpha}dy\Big)|\phi|^{2^{\ast}_{\alpha}}ds\Big)\\
&+O\Big(\int_{ D_{\rho}}\int_{\R^N\backslash D_{\rho}}x(x-y)\frac{|\phi(x)|^{2^{\ast}_{\alpha}}|\phi(y)|^{2^{\ast}_{\alpha}}}{|x-y|^{\alpha+2}}dxdy\Big)+O(\frac{1}{\lambda^{2+\varepsilon}}), \ i=3,\cdot\cdot\cdot,N.
\endaligned\end{equation}

\noindent We find that \eqref{d14} is equivalent to
\begin{equation}\label{d17}
\aligned
\frac{1}{2^{\ast}_{\alpha}}\int_{D_{\rho}}\langle x,&\nabla K(x)\rangle \Big( \int_{\R^N}\frac{K(y)|u_{m}(y)|^{2}}{|x-y|^\alpha}dy \Big)u_{m}^{2}dx\\
=&O(\frac{m}{\lambda^{2+\varepsilon}})+O\Big(\int_{\partial D_{\rho}}|\nabla \phi|^{2}ds
+\int_{\partial D_{\rho}}\Big(\int_{\R^N}\frac{|\phi(y)|^{2^{\ast}_{\alpha}}}{|x-y|^\alpha}dy\Big)|\phi|^{2^{\ast}_{\alpha}}ds\Big)\\
&
+O\Big(\int_{ D_{\rho}}\int_{\R^N\backslash D_{\rho}}x(x-y)\frac{|\phi(x)|^{2^{\ast}_{\alpha}}|\phi(y)|^{2^{\ast}_{\alpha}}}{|x-y|^{\alpha+2}}dxdy\Big)+O(\frac{1}{\lambda^{2+\varepsilon}}),
\endaligned\end{equation}
where $i=3,\cdots,N$ for some small $\varepsilon>0$.

\noindent Integrating by parts in \eqref{d2}, we find it is equivalent to
\begin{multline}\label{d11}
	\int_{D_{\rho}}
	\frac{\partial K(r,x'')}{\partial x_{i}}\Big( \int_{\R^N}\frac{K(y)|u_{m}(y)|^{2^{\ast}_{\alpha}}}{|x-y|^\alpha}dy \Big) u_{m}^{2^{\ast}_{\alpha}} dx\\
	=O\Big(\int_{\partial D_{\rho}}|\nabla \phi|^{2}ds
	+\int_{\partial D_{\rho}}\Big(\int_{\R^N}\frac{|\phi(y)|^{2^{\ast}_{\alpha}}}{|x-y|^\alpha}dy\Big)|\phi|^{2^{\ast}_{\alpha}}ds\Big)\\
	+O\Big(\int_{ D_{\rho}}\int_{\R^N\backslash D_{\rho}}x(x-y)\frac{|\phi(x)|^{2^{\ast}_{\alpha}}|\phi(y)|^{2^{\ast}_{\alpha}}}{|x-y|^{\alpha+2}}dxdy\Big),
\end{multline}
where $i=3,\cdots,N$. From \eqref{d11}, we can rewrite \eqref{d17} as follows 
\begin{equation}\label{d18}
\begin{split}
	&\int_{D_{\rho}}\frac{r}{2^{\ast}_{\alpha}}\frac{\partial K(r,x'')}{\partial r} \Big( \int_{\R^N}\frac{K(y)|u_{m}(y)|^{2^{\ast}_{\alpha}}}{|x-y|^\alpha}dy \Big)u_{m}^{2^{\ast}_{\alpha}}dx\\
	&=o(\frac{m}{\lambda^{2}})+O\Big(\int_{\partial D_{\rho}}|\nabla \phi|^{2}ds
	+\int_{\partial D_{\rho}}\Big(\int_{D_{\rho}} \frac{|\phi(y)|^{2^{\ast}_{\alpha}}}{|x-y|^\alpha}\Big)|\phi|^{2^{\ast}_{\alpha}}ds\Big)\\
	&+O\Big(\int_{ D_{\rho}}\int_{\R^N\backslash D_{\rho}}x(x-y)\frac{|\phi(x)|^{2^{\ast}_{\alpha}}|\phi(y)|^{2^{\ast}_{\alpha}}}{|x-y|^{\alpha+2}}dxdy\Big),
\end{split}
\end{equation}
where $i=3,\cdot\cdot\cdot,N$. In order to estimate \eqref{d11} and \eqref{d18}, let us first prove
\begin{lem}\label{D4}
The following holds
\begin{equation}
	\int|\nabla \phi|^{2}dx
	+\int\Big(\int_{\R^N}\frac{|\phi(y)|^{2^{\ast}_{\alpha}}}{|x-y|^\alpha}dy\Big)|\phi|^{2^{\ast}_{\alpha}}dx
	=O(\frac{m}{\lambda^{2+\varepsilon}}).
\end{equation}
\end{lem}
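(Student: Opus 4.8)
The plan is to estimate the two quantities $\|\nabla\phi\|_{L^2}^2$ and $\iint |x-y|^{-\alpha}|\phi(y)|^{2^{\ast}_{\alpha}}|\phi(x)|^{2^{\ast}_{\alpha}}\,dx\,dy$ directly from the pointwise bound on $\phi=\phi_{\overline{r},\overline{x}'',\lambda}$ provided by Lemma \ref{C3}, namely $\|\phi\|_{\ast}\le C\lambda^{-(1+\varepsilon)}$. First I would handle the nonlocal term: using the definition of $\|\cdot\|_{\ast}$ we have pointwise
$$
|\phi(x)|\le C\|\phi\|_{\ast}\,\lambda^{\frac{N-2}{2}}\sum_{j=1}^m\frac{1}{(1+\lambda|x-z_j|)^{\frac{N-2}{2}+\tau}},
$$
so $|\phi|^{2^{\ast}_{\alpha}}$ is controlled by $\|\phi\|_{\ast}^{2^{\ast}_{\alpha}}$ times a sum of the standard bubble-type weights. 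Then by exactly the Riesz-potential estimates used repeatedly in Section 2 (Lemma \ref{B3}, Lemma \ref{B4} and the decoupling bound $\sum_{j\ge 2}(\lambda|z_1-z_j|)^{-\tau_1}\le C$), one gets $|x|^{-\alpha}\ast|\phi|^{2^{\ast}_{\alpha}}\le C\|\phi\|_{\ast}^{2^{\ast}_{\alpha}}\lambda^{\alpha/2}\sum_j (1+\lambda|x-z_j|)^{-\alpha/2}$, and multiplying by $|\phi|^{2^{\ast}_{\alpha}}$ and integrating over $\mathbb{R}^N$ yields a bound of the shape $Cm\|\phi\|_{\ast}^{2\cdot 2^{\ast}_{\alpha}}\lambda^{\text{(something)}}$. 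Tracking exponents and using $\|\phi\|_{\ast}\le C\lambda^{-(1+\varepsilon)}$ together with $\lambda\sim m^{\frac{N-2}{N-4}}$ should give $O(m\lambda^{-2-\varepsilon})$ with room to spare, since $2\cdot2^{\ast}_{\alpha}>2$.

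For the Dirichlet term the plan is to test the equation \eqref{c14} (equivalently \eqref{c16}) against $\phi$ itself. Because of the orthogonality condition in \eqref{c14}, the terms involving $c_l$ drop out, so
$$
\int_{\mathbb{R}^N}|\nabla\phi|^2\,dx
=\int_{\mathbb{R}^N}\Big(N(\phi)+l_m+(\text{linearized nonlocal operator applied to }\phi)\Big)\phi\,dx.
$$
The linearized terms $(2^{\ast}_{\alpha}-1)K(|x|^{-\alpha}\ast(KZ^{2^{\ast}_{\alpha}}))Z^{2^{\ast}_{\alpha}-2}\phi^2$ and $2^{\ast}_{\alpha}K(|x|^{-\alpha}\ast(KZ^{2^{\ast}_{\alpha}-1}\phi))Z^{2^{\ast}_{\alpha}-1}\phi$ are estimated exactly as in the proof of Lemma \ref{C1} (the computations leading to \eqref{5143} and \eqref{51411}), giving $O(m\|\phi\|_{\ast}^2\lambda^{\text{(exponent)}})$; the term $\int N(\phi)\phi$ is bounded using Lemma \ref{C4} and a Hölder/HLS pairing by $C\|N(\phi)\|_{\ast\ast}\|\phi\|_{\ast}\cdot(\text{volume factor})\le C\|\phi\|_{\ast}^3\cdot(\dots)$; and $\int l_m\phi$ by Lemma \ref{C5} and Hölder is $\le C\|l_m\|_{\ast\ast}\|\phi\|_{\ast}\cdot(\dots)\le C\lambda^{-(1+\varepsilon)}\cdot\lambda^{-(1+\varepsilon)}\cdot(\dots)$. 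In each case one uses the elementary integral $\int_{\mathbb{R}^N}\lambda^N(1+\lambda|x-z_j|)^{-(N+2)}\,dx\le C$, summed over $j$ to produce the factor $m$, and then invokes $\|\phi\|_{\ast}\le C\lambda^{-(1+\varepsilon)}$ and $\lambda\sim m^{\frac{N-2}{N-4}}$ to absorb the surplus powers of $m$ into $\lambda^{-\varepsilon}$.

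Combining the two parts gives the claimed $O(m\lambda^{-2-\varepsilon})$. The main obstacle I expect is purely bookkeeping: keeping careful track of the many competing exponents coming from the Riesz potential in dimension $N$ with general $\alpha>5-\frac{6}{N-2}$, in particular making sure every $\min\{\alpha,\cdot\}$ that appears still leaves a strictly positive margin $\varepsilon$ after one substitutes $\lambda\sim m^{\frac{N-2}{N-4}}$ — this is exactly the place where the hypothesis on $\alpha$ (and hence the dimension restriction $N\ge 5$) is used, just as in the estimate of $c_l$ in Lemma \ref{C1} where the condition $\alpha>5-\frac{6}{N-2}$ first entered. No genuinely new idea is needed beyond the machinery already assembled in Sections 2–3; the proof is a reprise of those estimates with $\phi$ (rather than a bubble-derivative) in the test slot.
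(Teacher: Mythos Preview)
Your plan is correct and matches the paper's proof for the Dirichlet term: the paper also tests \eqref{c14} against $\phi$, drops the $c_l$-sum by orthogonality, and then splits the right-hand side into $\int l_m\phi$ (controlled via Lemma~\ref{C5}) and the difference between the full nonlinearity and its zeroth-order part (controlled via Lemma~\ref{C4} plus the linearised pieces), obtaining in each case $O(m\lambda^{-2-\varepsilon})$ exactly as you outline.

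The only genuine difference is in the nonlocal term. You propose to bound $\iint|x-y|^{-\alpha}|\phi(x)|^{2^{\ast}_{\alpha}}|\phi(y)|^{2^{\ast}_{\alpha}}\,dx\,dy$ directly from the pointwise control $\|\phi\|_{\ast}\le C\lambda^{-(1+\varepsilon)}$ together with Lemmas~\ref{B3}--\ref{B4}; this works and yields $Cm\|\phi\|_{\ast}^{2\cdot 2^{\ast}_{\alpha}}$, which is comfortably $O(m\lambda^{-2-\varepsilon})$ since $2^{\ast}_{\alpha}>1$. The paper instead takes a shortcut: it applies the Hardy--Littlewood--Sobolev and Sobolev inequalities to get
\[
\iint\frac{|\phi(x)|^{2^{\ast}_{\alpha}}|\phi(y)|^{2^{\ast}_{\alpha}}}{|x-y|^{\alpha}}\,dx\,dy
\le C\Big(\int|\phi|^{2^{\ast}}\Big)^{\frac{2N-\alpha}{N}}
\le C\Big(\int|\nabla\phi|^{2}\Big)^{2^{\ast}_{\alpha}},
\]
and then feeds in the already-established bound on $\int|\nabla\phi|^{2}$. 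Your route is more hands-on and in the spirit of Section~2's weighted-space machinery; the paper's is shorter and avoids reopening the Riesz-potential bookkeeping. Either is fine.
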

\begin{proof}
It follows from \eqref{c14} that
$$\aligned
\int|\nabla \phi|^{2}=&\int\Big[K(x)\Big(|x|^{-\alpha}\ast (K(x)|(Z_{\overline{r},\overline{x}'',\lambda}+\phi)|^{2^{\ast}_{\alpha}})\Big)(Z_{\overline{r},\overline{x}'',\lambda}+\phi)^{2^{\ast}_{\alpha}-1}
+\Delta Z_{\overline{r},\overline{x}'',\lambda}\Big]\phi\\
=&\int\Big[K(x)\Big(|x|^{-\alpha}\ast (K(x)|(Z_{\overline{r},\overline{x}'',\lambda}+\phi)|^{2^{\ast}_{\alpha}})\Big)(Z_{\overline{r},\overline{x}'',\lambda}+\phi)^{2^{\ast}_{\alpha}-1}\\
&\hspace{4mm}-K(x)\Big(|x|^{-\alpha}\ast (K(x)|Z_{\overline{r},\overline{x}'',\lambda}|^{2^{\ast}_{\alpha}})\Big)Z_{\overline{r},\overline{x}'',\lambda}^{2^{\ast}_{\alpha}-1}\Big]\phi\\
&+\int\Big[K(x)\Big(|x|^{-\alpha}\ast (K(x)|Z_{\overline{r},\overline{x}'',\lambda}|^{2^{\ast}_{\alpha}})\Big)Z_{\overline{r},\overline{x}'',\lambda}^{2^{\ast}_{\alpha}-1}
+\Delta Z_{\overline{r},\overline{x}'',\lambda}\Big]\phi\\
=:&\phi_{1}+\phi_{2}.
\endaligned$$
Let us estimate $\phi_{1}$ by Lemma \ref{C4} as follows 
$$\aligned
\left|\phi_{1}\right|=&\int \left|N(\phi)\right|\phi
+\int\Big[(2^{\ast}_{\alpha}-1)K(x)\Big(|x|^{-\alpha}\ast (K(x)|Z_{\overline{r},\overline{x}'',\lambda}|^{2^{\ast}_{\alpha}})\Big)Z_{\overline{r},\overline{x}'',\lambda}^{2^{\ast}_{\alpha}-2}\phi\\
&\hspace{8mm}+2^{\ast}_{\alpha}K(x)\Big(|x|^{-\alpha}\ast (K(x)|Z_{\overline{r},\overline{x}'',\lambda}^{2^{\ast}_{\alpha}-1}\phi|)\Big)Z_{\overline{r},\overline{x}'',\lambda}^{2^{\ast}_{\alpha}-1}\Big]\phi\\
\leq& C\|\phi\|_{\ast}^{3}
\int\sum_{j=1}^{m}
\frac{\lambda^{\frac{N+2}{2}}}{(1+\lambda|x-z_{j}|)^{\frac{N+2}{2}+\tau}}\sum_{j=1}^{m}
\frac{\lambda^{\frac{N-2}{2}}}{(1+\lambda|x-z_{j}|)^{\frac{N-2}{2}+\tau}}\\
\leq&\frac{Cm}{\lambda^{3+3\varepsilon}}=O(\frac{m}{\lambda^{2+\varepsilon}}).
\endaligned$$
In order to estimate $\phi_{2}$, we observe that from Lemma \ref{C5} one has 
$$\aligned
\phi_{2}=&\int\Big[K(x)\Big(|x|^{-\alpha}\ast (K(x)|Z_{\overline{r},\overline{x}'',\lambda}|^{2^{\ast}_{\alpha}})\Big)Z_{\overline{r},\overline{x}'',\lambda}^{2^{\ast}_{\alpha}-1}
-\sum_{j=1}^{m}\xi\Big(|x|^{-\alpha}\ast |U_{z_j,\lambda}|^{2^{\ast}_{\alpha}}\Big)U_{z_j,\lambda}^{2^{\ast}_{\alpha}-1}\\
&\hspace{8mm}+Z_{\overline{r},\overline{x}'',\lambda}^{\ast}\Delta\xi+2\nabla\xi\nabla Z_{\overline{r},\overline{x}'',\lambda}^{\ast}\Big]\phi\\
\leq& C\frac{\|\phi\|_{\ast}}{\lambda^{1+\varepsilon}}
\int\sum_{j=1}^{m}
\frac{\lambda^{\frac{N+2}{2}}}{(1+\lambda|x-z_{j}|)^{\frac{N+2}{2}+\tau}}\sum_{j=1}^{m}
\frac{\lambda^{\frac{N-2}{2}}}{(1+\lambda|x-z_{j}|)^{\frac{N-2}{2}+\tau}}\\
\leq&\frac{Cm}{\lambda^{2+2\varepsilon}}=O(\frac{m}{\lambda^{2+\varepsilon}}).
\endaligned$$
So we conclude that
$$\aligned
\int|\nabla \phi|^{2}=O(\frac{m}{\lambda^{2+\varepsilon}}).
\endaligned$$
Note that from Proposition \ref{pro1.1}
$$\aligned
\int\Big(\int_{\R^N}\frac{|\phi(y)|^{2^{\ast}_{\alpha}}}{|x-y|^\alpha}dy\Big)|\phi|^{2^{\ast}_{\alpha}}dx
\leq C\Big(\int |\phi|^{2^{\ast}}\Big)^{\frac{2N-\alpha}{N}} \leq C\Big(\int|\nabla \phi|^{2}\Big)^{2^{\ast}_{\alpha}}\leq O(\frac{m}{\lambda^{2+\varepsilon}}).
\endaligned$$
which completes  the proof.
\end{proof}

\noindent Next step is to prove the following 
\begin{lem}\label{D5}
For any $C^1$ bounded function $g(r, x'')$, it holds
\begin{multline*}
	\int_{D_{\rho}}g(r,x'') \Big( \int_{\R^N}\frac{K(y)|u_{m}(y)|^{2^{\ast}_{\alpha}}}{|x-y|^\alpha}dy \Big) u_{m}^{2^{\ast}_{\alpha}}dx\\
	=m\Big(g(\overline{r},\overline{x}'')
	\int_{\mathbb{R}^{N}} \Big( \int_{\R^N}\frac{K(z_j+\frac{y}{\lambda})|U_{0,1}(y)|^{2^{\ast}_{\alpha}}}{|x-y|^\alpha}dy \Big)U_{0,1}^{2^{\ast}_{\alpha}}dx+o(\frac{1}{\lambda^{1-\epsilon}})\Big).
\end{multline*}
\end{lem}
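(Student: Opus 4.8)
The plan is to reduce the integral over $D_\rho$ to a sum of $m$ identical integrals over the ``cell'' $\Omega_1$ using the symmetry of $u_m$ under the group generated by the rotations by $2\pi/m$ and the reflection $x_2\mapsto -x_2$, so that only the contribution near the single bubble $U_{z_1,\lambda}$ is relevant. First I would write $u_m=Z_{\overline r,\overline x'',\lambda}+\phi$ and, on $\Omega_1\cap D_\rho$, replace $Z_{\overline r,\overline x'',\lambda}$ by $\xi U_{z_1,\lambda}$ plus the tail $\sum_{j\ge 2}\xi U_{z_j,\lambda}$, which by Lemma~\ref{B2} and the estimates already used in the proof of Lemma~\ref{C5} contributes only $o(\lambda^{-1+\epsilon})$ to the weighted integral (each tail term carries a factor $(\lambda|z_1-z_j|)^{-\gamma}$ summing to $(m/\lambda)^\gamma=o(1)$). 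The cross terms involving $\phi$ are controlled by Lemma~\ref{D4}: $\|\nabla\phi\|_2^2=O(m/\lambda^{2+\varepsilon})$ together with the Hardy--Littlewood--Sobolev inequality (Proposition~\ref{pro1.1}) and Hölder bound the mixed nonlocal terms $\int(|x|^{-\alpha}\ast K|u_m|^{2^\ast_\alpha})\cdot(\text{terms linear or higher in }\phi)$ by $O(m/\lambda^{2})\cdot\|\phi\|_\ast^{\,\cdot}$, hence they are absorbed into the error $o(m/\lambda^{1-\epsilon})$ after dividing by the natural scale.

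Next I would perform the rescaling $x=z_1+y/\lambda$ inside the leading term $\int_{\Omega_1\cap D_\rho}g(r,x'')\big(|x|^{-\alpha}\ast(K|U_{z_1,\lambda}|^{2^\ast_\alpha})\big)U_{z_1,\lambda}^{2^\ast_\alpha}\,dx$. Since $U_{z_1,\lambda}(z_1+y/\lambda)=\lambda^{(N-2)/2}U_{0,1}(y)$ and $2^\ast_\alpha\cdot\tfrac{N-2}{2}\cdot 2-\alpha=N$ by the definition of $2^\ast_\alpha$, the $\lambda$-powers cancel exactly and the integral becomes $\int g(z_1+\tfrac{y}{\lambda})\big(|y|^{-\alpha}\ast(K(z_1+\tfrac{\cdot}{\lambda})|U_{0,1}|^{2^\ast_\alpha})\big)U_{0,1}^{2^\ast_\alpha}\,dy$ over an expanding ball $B_{\rho\lambda}$. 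Because $U_{0,1}^{2^\ast_\alpha}\in L^1$ decays like $|y|^{-(N+2-\alpha)}$ at infinity and the convolution kernel $|y|^{-\alpha}$ against an $L^1$-concentrated mass behaves like $|y|^{-\alpha}$ for large $|y|$, the integrability exponent near infinity is comfortably below $-N$ (this is where $\alpha>5-\tfrac{6}{N-2}$, equivalently the condition ensuring $2N+1-\alpha>N+\text{(gain)}$, again enters as in the proof of Lemma~\ref{C1}), so the tail outside $B_{\rho\lambda}$ is $O(\lambda^{-\sigma})$ for some $\sigma>0$ and can be dropped. Then I would use the continuity of $g$ and $K$ (assumption $\textbf{(K2)}$, with $K\in C^3$ near $(r_0,x_0'')$) together with $|z_1-(r_0,x_0'')|\le|(\overline r,\overline x'')-(r_0,x_0'')|+|(\overline r,\overline x'')-(r_0,x_0'')|\le 2\lambda^{-(1-\theta)}$ and $|y|/\lambda\le \rho$ on $B_{\rho\lambda}$ to replace $g(z_1+y/\lambda)$ by $g(\overline r,\overline x'')$ at the cost of $o(\lambda^{-1+\epsilon})$ (the gradient of $g$ times $|y|/\lambda$, integrated against the fast-decaying weight, gives a genuine gain), while keeping $K(z_1+y/\lambda)=K(z_j+y/\lambda)$ inside the convolution where a finer expansion is needed elsewhere and so is retained verbatim in the statement.

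Finally, summing over the $m$ congruent cells gives the factor $m$ in front, and collecting the error terms — tails of the bubble interaction, the $\phi$-contributions via Lemma~\ref{D4}, the boundary terms on $\partial D_\rho$ (which are exponentially small in $\lambda$ on $\Omega_1$ since $\rho>2\delta$ and the bubbles live at scale $1/\lambda$ around $z_j$ with $|z_j-(r_0,x_0'')|\to 0$), and the rescaling remainder $o(\lambda^{-1+\epsilon})$ — yields exactly the claimed identity
\[
\int_{D_\rho}g\,(|x|^{-\alpha}\ast K|u_m|^{2^\ast_\alpha})u_m^{2^\ast_\alpha}
= m\Big(g(\overline r,\overline x'')\int_{\R^N}\big(|x|^{-\alpha}\ast K(z_j+\tfrac{y}{\lambda})|U_{0,1}|^{2^\ast_\alpha}\big)U_{0,1}^{2^\ast_\alpha}dx+o(\lambda^{-(1-\epsilon)})\Big).
\]
The main obstacle I anticipate is bookkeeping the nonlocal cross terms: unlike the local case, one cannot simply split $u_m^{2\cdot 2^\ast_\alpha}$ pointwise, and the double convolution couples the region $D_\rho$ to all of $\R^N$, so one must carefully separate the ``diagonal'' contribution of each bubble from the off-diagonal bubble--bubble and bubble--$\phi$ interactions, checking in each case that the HLS/Hölder estimates combined with the interaction decay $\sum_{j\ge2}(\lambda|z_1-z_j|)^{-\gamma}=O((m/\lambda)^{\gamma})$ and the $\alpha$-range hypothesis produce a strictly negative power of $\lambda$ beyond the leading $m\lambda^{0}$ scale.
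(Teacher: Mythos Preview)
Your overall strategy---reduce by symmetry to a single cell, peel off the tail bubbles $\sum_{j\ge2}U_{z_j,\lambda}$ via Lemma~\ref{B2}, control the $\phi$-contributions, then rescale $x=z_1+y/\lambda$ in the surviving diagonal term---is exactly the route the paper takes. The rescaling check (that the $\lambda$-powers cancel because $2\cdot 2^\ast_\alpha\cdot\tfrac{N-2}{2}-\alpha=N$) and the bubble--bubble interaction bounds are handled the same way in both.

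The one place your write-up wobbles is the treatment of the cross terms in $\phi$. You invoke Lemma~\ref{D4} ($\|\nabla\phi\|_2^2=O(m/\lambda^{2+\varepsilon})$) together with HLS/H\"older, and quote an intermediate bound ``$O(m/\lambda^{2})\cdot\|\phi\|_\ast^{\,\cdot}$''. That intermediate estimate is not correct: the linear-in-$\phi$ term
\[
\int_{D_\rho} g\,(|x|^{-\alpha}\ast K Z_{\overline r,\overline x'',\lambda}^{2^\ast_\alpha})\,Z_{\overline r,\overline x'',\lambda}^{2^\ast_\alpha-1}\phi
\]
is of order $Cm\|\phi\|_\ast=O(m/\lambda^{1+\varepsilon})$, not $O(m/\lambda^{3+\cdots})$. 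The paper does not go through Lemma~\ref{D4} here at all; it uses the pointwise bound $\|\phi\|_\ast\le C\lambda^{-(1+\varepsilon)}$ from \eqref{c15} together with the weighted-sum inequalities (Lemma~\ref{B2}, Lemma~\ref{B4}), exactly as in the proof of Lemma~\ref{C4}. The HLS route you sketch would also close, but it picks up an extra factor $\|Z_{\overline r,\overline x'',\lambda}\|_{2^\ast}^{2\cdot 2^\ast_\alpha-1}\sim m^{(2\cdot 2^\ast_\alpha-1)/2^\ast}>m$, which eats into the $\lambda$-decay and forces a larger $\epsilon$ in the final $o(\lambda^{-(1-\epsilon)})$; the pointwise $\|\cdot\|_\ast$ approach is both cleaner and sharper, and is what the rest of the paper is set up for.
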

\begin{proof}
Since $u_{m}=Z_{\overline{r},\overline{x}'',\lambda}+
\phi_{\overline{r},\overline{x}'',\lambda}$, we have
\begin{multline*}
\int_{D_{\rho}}g(r,x'') \Big( \int_{\R^N}\frac{K(y)|u_{m}(y)|^{2^{\ast}_{\alpha}}}{|x-y|^\alpha}dy \Big) u_{m}^{2^{\ast}_{\alpha}}dx\\
=\int_{D_{\rho}}g(r,x'') \Big( \int_{\R^N}\frac{K(y)|Z_{\overline{r},\overline{x}'',\lambda}+
		\phi_{\overline{r},\overline{x}'',\lambda}|^{2^{\ast}_{\alpha}}}{|x-y|^\alpha}dy \Big) (Z_{\overline{r},\overline{x}'',\lambda}+
	\phi_{\overline{r},\overline{x}'',\lambda})^{2^{\ast}_{\alpha}}dx\\
	=\int_{D_{\rho}}g(r,x'') \Big( \int_{\R^N}\frac{K(y)|Z_{\overline{r},\overline{x}'',\lambda}(y)|^{2^{\ast}_{\alpha}}}{|x-y|^\alpha}dy \Big) (Z_{\overline{r},\overline{x}'',\lambda})^{2^{\ast}_{\alpha}}dx\\
	+\int_{D_{\rho}}g(r,x'') \Big( \int_{\R^N}\frac{K(y)|\phi_{\overline{r},\overline{x}'',\lambda}(y)|^{2^{\ast}_{\alpha}}}{|x-y|^\alpha}dy \Big) (\phi_{\overline{r},\overline{x}'',\lambda})^{2^{\ast}_{\alpha}}dx\\
	+O\Big(\int_{D_{\rho}}g(r,x'') \Big( \int_{\R^N}\frac{K(y)|Z_{\overline{r},\overline{x}'',\lambda}(y)|^{2^{\ast}_{\alpha}}}{|x-y|^\alpha}dy \Big) Z_{\overline{r},\overline{x}'',\lambda}^{2^{\ast}_{\alpha}-1}\phi_{\overline{r},\overline{x}'',\lambda}dx\\
	 +\int_{D_{\rho}}g(r,x'') \Big( \int_{\R^N}\frac{K(y)|Z_{\overline{r},\overline{x}'',\lambda}(y)|^{2^{\ast}_{\alpha}}}{|x-y|^\alpha}dy \Big) Z_{\overline{r},\overline{x}'',\lambda}^{2^{\ast}_{\alpha}-2}\phi^2_{\overline{r},\overline{x}'',\lambda}dx\\
	+\int_{D_{\rho}}g(r,x'') \Big( \int_{\R^N}\frac{K(y)|Z_{\overline{r},\overline{x}'',\lambda}(y)|^{2^{\ast}_{\alpha}}}{|x-y|^\alpha}dy \Big) \phi_{\overline{r},\overline{x}'',\lambda}^{2^{\ast}_{\alpha}}dx\\
	+\int_{D_{\rho}}g(r,x'') \Big( \int_{\R^N}\frac{K(y)|Z_{\overline{r},\overline{x}'',\lambda}^{2^{\ast}_{\alpha}-1}\phi_{\overline{r},\overline{x}'',\lambda}(y)|}{|x-y|^\alpha}dy \Big) Z_{\overline{r},\overline{x}'',\lambda}^{2^{\ast}_{\alpha}-1}\phi_{\overline{r},\overline{x}'',\lambda}dx\\
	+\int_{D_{\rho}}g(r,x'') \Big( \int_{\R^N}\frac{K(y)|Z_{\overline{r},\overline{x}'',\lambda}^{2^{\ast}_{\alpha}-1}\phi_{\overline{r},\overline{x}'',\lambda}(y)|}{|x-y|^\alpha}dy \Big) Z_{\overline{r},\overline{x}'',\lambda}^{2^{\ast}_{\alpha}-2}\phi^2_{\overline{r},\overline{x}'',\lambda}dx\\
	+\int_{D_{\rho}}g(r,x'') \Big( \int_{\R^N}\frac{K(y)|Z_{\overline{r},\overline{x}'',\lambda}^{2^{\ast}_{\alpha}-1}(y)\phi_{\overline{r},\overline{x}'',\lambda}|}{|x-y|^\alpha}dy \Big) \phi_{\overline{r},\overline{x}'',\lambda}^{2^{\ast}_{\alpha}}dx \\
	+\int_{D_{\rho}}g(r,x'') \Big( \int_{\R^N}\frac{K(y)|Z_{\overline{r},\overline{x}'',\lambda}^{2^{\ast}_{\alpha}-2}\phi^2_{\overline{r},\overline{x}'',\lambda}(y)|}{|x-y|^\alpha}dy \Big) Z_{\overline{r},\overline{x}'',\lambda}^{2^{\ast}_{\alpha}-2}\phi^2_{\overline{r},\overline{x}'',\lambda}dx\\
	+\int_{D_{\rho}}g(r,x'') \Big( \int_{\R^N}\frac{K(y)|Z_{\overline{r},\overline{x}'',\lambda}^{2^{\ast}_{\alpha}-2}\phi^2_{\overline{r},\overline{x}'',\lambda}(y)|}{|x-y|^\alpha}dy \Big) \phi_{\overline{r},\overline{x}'',\lambda}^{2^{\ast}_{\alpha}}dx\Big)\ .
\end{multline*}
Using the estimates in Lemma \ref{C4}, we find that
$$\aligned
&\left|\int_{D_{\rho}}g(r,x'') \Big( \int_{\R^N}\frac{K(y)|Z_{\overline{r},\overline{x}'',\lambda}(y)|^{2^{\ast}_{\alpha}}}{|x-y|^\alpha}dy \Big)Z_{\overline{r},\overline{x}'',\lambda}^{2^{\ast}_{\alpha}-1}\phi_{\overline{r},\overline{x}'',\lambda}dx\right|\\
&\leq C\|\phi\|_{\ast}\int\sum_{i=1}^{m}\frac{\lambda^{\frac{\alpha}{2}}}{(1+\lambda|x-z_{i}|)^{\alpha}}\Big(\sum_{j=1}^{m}\frac{\lambda^{\frac{N-2}{2}}}{(1+\lambda|x-z_{j}|)^{\frac{N-2}{2}+\tau}}\Big)^{2^{\ast}_{\alpha}}\\
&\leq
C\|\phi\|_{\ast}\int\sum_{i=1}^{m}\frac{\lambda^{\frac{\alpha}{2}}}{(1+\lambda|x-z_{i}|)^{\alpha}}\sum_{j=1}^{m}
\frac{\lambda^{N-\frac{\alpha}{2}}}{(1+\lambda|x-z_{j}|)^{N-\frac{\alpha}{2}+\tau}}\Big(\sum_{j=1}^{m}\frac{1}{(1+\lambda|x-z_{j}|)^{\tau}}\Big)^{2^{\ast}_{\alpha}-1}\\
&\leq C\|\phi\|_{\ast}\int\sum_{i=1}^{m}\frac{1}{(1+|x-\lambda z_{i}|)^{N+\frac{\alpha}{2}+\tau}}\\
&\hspace{8mm}+C\|\phi\|_{\ast}\int\sum_{i\neq j}\frac{1}{(\lambda|z_{i}-z_{j}|)^{\tau}}\Big(\frac{1}{(1+|x-\lambda z_{i}|)^{N+\frac{\alpha}{2}}}+\frac{\lambda}{(1+|x-\lambda z_{j}|)^{N+\frac{\alpha}{2}}}\Big)\\
&\leq Cm\|\phi\|_{\ast}+C\|\phi\|_{\ast}
\leq \frac{Cm}{\lambda^{1+\varepsilon}}=o(\frac{m}{\lambda^{1-\varepsilon}}).
\endaligned$$
We can estimate the remaining terms in a similar fashion to conclude that
\begin{multline*}
\int_{D_{\rho}}g(r,x'') \Big( \int_{\R^N}\frac{K(y)|u_{m}(y)|^{2^{\ast}_{\alpha}}}{|x-y|^\alpha}dy \Big) u_{m}^{2^{\ast}_{\alpha}}dx\\
=\int_{D_{\rho}}g(r,x'') \Big( \int_{\R^N}\frac{K(y)|Z_{\overline{r},\overline{x}'',\lambda}|^{2^{\ast}_{\alpha}}}{|x-y|^\alpha}dy \Big) Z_{\overline{r},\overline{x}'',\lambda}^{2^{\ast}_{\alpha}}dx
+o(\frac{m}{\lambda^{1-\varepsilon}})\ .
\end{multline*}
Observe that
$$\aligned
&\int_{D_{\rho}}g(r,x'') \Big( \int_{\R^N}\frac{K(y)|Z_{z_j,\lambda}|^{2^{\ast}_{\alpha}}}{|x-y|^\alpha}dy \Big) Z_{z_j,\lambda}^{2^{\ast}_{\alpha}}dx\\
=&g(\overline{r},\overline{x}'')\int_{\R^N} \Big( \int_{\R^N}\frac{K(y)|Z_{z_j,\lambda}|^{2^{\ast}_{\alpha}}}{|x-y|^\alpha}dy \Big) Z_{z_j,\lambda}^{2^{\ast}_{\alpha}}dx+\int_{D_{\rho}^C}g(\overline{r},\overline{x}'') \Big( \int_{\R^N}\frac{K(y)|Z_{z_j,\lambda}|^{2^{\ast}_{\alpha}}}{|x-y|^\alpha}dy \Big) Z_{z_j,\lambda}^{2^{\ast}_{\alpha}}dx\\
&+\int_{D_{\rho}}[g(r,x'')-g(\overline{r},\overline{x}'')] \Big( \int_{\R^N}\frac{K(y)|Z_{z_j,\lambda}|^{2^{\ast}_{\alpha}}}{|x-y|^\alpha}dy \Big) Z_{z_j,\lambda}^{2^{\ast}_{\alpha}}dx\\
=&g(\overline{r},\overline{x}'')
\int_{\mathbb{R}^{N}} \Big( \int_{\R^N}\frac{K(z_j+\frac{y}{\lambda})|U_{0,1}(y)|^{2^{\ast}_{\alpha}}}{|x-y|^\alpha}dy \Big)U_{0,1}^{2^{\ast}_{\alpha}}dx+o(\frac{1}{\lambda^{1-\varepsilon}}).
\endaligned$$
Moreover, one has 
$$\aligned
&\int_{D_{\rho}}g(r,x'')\Big(|x|^{-\alpha}\ast K|Z_{z_i,\lambda}|^{2^{\ast}_{\alpha}}\Big)Z_{z_i,\lambda}^{2^{\ast}_{\alpha}-1}\sum_{j\neq i}Z_{z_j,\lambda}dx\leq C\sum_{j\neq i}\int\frac{\lambda^{\frac{N+2}{2}}}{(1+\lambda|x-z_{i}|)^{N+2}}\frac{\lambda^{\frac{N-2}{2}}}{(1+\lambda|x-z_{j}|)^{N-2}}\\
&\leq C\sum_{j\neq i}\frac{1}{(\lambda|z_{i}-z_{j}|)^{N-2}}\int
\Big(\frac{\lambda^{N}}{(1+\lambda|x-z_{i}|)^{N+2}}+\frac{\lambda^{N}}{(1+\lambda|x-z_{j}|)^{N+2}}\Big)dx\\
&\leq C\sum_{j\neq i}(\frac{m}{\lambda})^{N-2}=O(\frac{m}{\lambda^{1+\varepsilon}})=o(\frac{1}{\lambda^{1-\varepsilon}}),
\endaligned$$
and 
$$\aligned
&\int_{D_{\rho}}g(r,x'')\Big(|x|^{-\alpha}\ast K|Z_{z_i,\lambda}|^{2^{\ast}_{\alpha}}\Big)Z_{z_i,\lambda}^{2^{\ast}_{\alpha}-2}\big(\sum_{j\neq i}Z_{z_j,\lambda}\big)^2dx\leq C\sum_{j\neq i}\int\frac{\lambda^{2}}{(1+\lambda|x-z_{i}|)^{4}}\frac{\lambda^{N-2}}{(1+\lambda|x-z_{j}|)^{2N-4-\tau}}\\
&\leq C\sum_{j\neq i}\frac{1}{(\lambda|z_{i}-z_{j}|)^{4}}\int
\Big(\frac{\lambda^{N}}{(1+\lambda|x-z_{i}|)^{2N-4-\tau}}+\frac{\lambda^{N}}{(1+\lambda|x-z_{j}|)^{2N-4-\tau}}\Big)dx\\
&\leq C\sum_{j\neq i}(\frac{m}{\lambda})^{4}=O(\frac{m}{\lambda^{1+\varepsilon}})=o(\frac{1}{\lambda^{1-\varepsilon}}),
\endaligned$$
and similarly 
$$\aligned
&\int_{D_{\rho}}g(r,x'')\Big(|x|^{-\alpha}\ast K|Z_{z_i,\lambda}|^{2^{\ast}_{\alpha}}\Big)(\sum_{j\neq i}Z_{z_j,\lambda})^{2^{\ast}_{\alpha}}dx\leq \frac{Cm}{\lambda^{1+\varepsilon}}=o(\frac{1}{\lambda^{1-\varepsilon}}).
\endaligned$$
We can deduce that all the remaining terms are bounded by $\frac{Cm}{\lambda^{1+\varepsilon}}$. As a consequence,
\begin{multline*}
\int_{D_{\rho}}g(r,x'') \Big( \int_{\R^N}\frac{K(y)|u_{m}(y)|^{2^{\ast}_{\alpha}}}{|x-y|^\alpha}dy \Big) u_{m}^{2^{\ast}_{\alpha}}dx\\
=m\Big(g(\overline{r},\overline{x}'')
\int_{\mathbb{R}^{N}} \Big( \int_{\R^N}\frac{K(z_j+\frac{y}{\lambda})|U_{0,1}(y)|^{2^{\ast}_{\alpha}}}{|x-y|^\alpha}dy \Big)U_{0,1}^{2^{\ast}_{\alpha}}dx+o(\frac{1}{\lambda^{1-\epsilon}})\Big).
\end{multline*}
\end{proof}

\noindent Lemma \ref{D4} implies that
\begin{multline*}
\int_{D_{4\delta}\backslash D_{3\delta}}
|\nabla \phi|^{2}dx+\int_{D_{4\delta}\backslash D_{3\delta}}\Big(\int_{D_{\rho}} \frac{|\phi(y)|^{2^{\ast}_{\alpha}}}{|x-y|^\alpha}dy\Big)|\phi|^{2^{\ast}_{\alpha}}dx\\
+\int_{ D_{4\delta}\backslash D_{3\delta}}\int_{\R^N\backslash D_{\rho}}x(x-y)\frac{|\phi(x)|^{2^{\ast}_{\alpha}}|\phi(y)|^{2^{\ast}_{\alpha}}}{|x-y|^{\alpha+2}}dxdy
=O(\frac{m}{\lambda^{2+\varepsilon}}),
\end{multline*}
where $i=3,\cdot\cdot\cdot,N$. As a consequence, we can find $\rho\in(3\delta,4\delta)$, such that
$$
\int_{\partial D_{\rho}}|\nabla \phi|^{2}ds
+\int_{\partial D_{\rho}}\Big(\int_{D_{\rho}} \frac{|\phi(y)|^{2^{\ast}_{\alpha}}}{|x-y|^\alpha}dy\Big)|\phi|^{2^{\ast}_{\alpha}}ds+
\int_{ D_{\rho}}\int_{\R^N\backslash D_{\rho}}x(x-y)\frac{|\phi(x)|^{2^{\ast}_{\alpha}}|\phi(y)|^{2^{\ast}_{\alpha}}}{|x-y|^{\alpha+2}}dxdy=O(\frac{m}{\lambda^{2+\varepsilon}}),
$$
where $i=3,\cdot\cdot\cdot,N$. By Lemma \ref{D5}, for any $C^1$ function $g(r, x'')$, it holds
\begin{multline*}
\int_{D_{\rho}}g(r,x'') \Big( \int_{\R^N}\frac{K(y)|u_{m}(y)|^{2^{\ast}_{\alpha}}}{|x-y|^\alpha}dy \Big) u_{m}^{2^{\ast}_{\alpha}}dx\\
=m\Big(g(\overline{r},\overline{x}'')
\int_{\mathbb{R}^{N}} \Big( \int_{\R^N}\frac{K(z_j+\frac{y}{\lambda})|U_{0,1}(y)|^{2^{\ast}_{\alpha}}}{|x-y|^\alpha}dy \Big)U_{0,1}^{2^{\ast}_{\alpha}}dx+o(\frac{1}{\lambda^{1-\epsilon}})\Big).
\end{multline*}
From \eqref{d11} and \eqref{d18}, we obtain
$$
m(\frac{\partial K(\overline{r},\overline{x}'')}{\partial \overline{x}_{i}}\int_{\mathbb{R}^{N}} \Big( \int_{\R^N}\frac{K(z_j+\frac{y}{\lambda})|U_{0,1}(y)|^{2^{\ast}_{\alpha}}}{|x-y|^\alpha}dy \Big)U_{0,1}^{2^{\ast}_{\alpha}}dx+o(\frac{1}{\lambda^{1-\epsilon}}))=o(\frac{m}{\lambda^{2}}),
$$
and
$$
m(\frac{\overline{r}}{2^{\ast}_{\alpha}}\frac{\partial K(\overline{r},\overline{x}'')}{\partial \overline{r}}\int_{\mathbb{R}^{N}} \Big( \int_{\R^N}\frac{K(z_j+\frac{y}{\lambda})|U_{0,1}(y)|^{2^{\ast}_{\alpha}}}{|x-y|^\alpha}dy \Big)U_{0,1}^{2^{\ast}_{\alpha}}dx+o(\frac{1}{\lambda^{1-\epsilon}}))=o(\frac{m}{\lambda^{2}}).
$$
As the function $K$ is bounded, the equations to determine $(\overline{r},\overline{x}'')$ are the following 
\begin{equation}\label{d21}
\frac{\partial K(\overline{r},\overline{x}'')}{\partial \overline{x}_{i}}=o(\frac{1}{\lambda^{1-\epsilon}}),  i=3,\cdot\cdot\cdot,N,
\end{equation}
and
\begin{equation}\label{d22}
\frac{\partial K(\overline{r},\overline{x}'')}{\partial \overline{r}}=o(\frac{1}{\lambda^{1-\epsilon}}).
\end{equation}

\noindent
{\bf Proof of Lemma \ref{EXS1}.}  We have proved that \eqref{d1}, \eqref{d2} and \eqref{d3} are equivalent respectively to \begin{equation}\label{d21}
\frac{\partial K(\overline{r},\overline{x}'')}{\partial \overline{x}_{i}}=o(\frac{1}{\lambda^{1-\epsilon}}),  i=3,\cdot\cdot\cdot,N,
\end{equation}

\begin{equation}\label{d22}
\frac{\partial K(\overline{r},\overline{x}'')}{\partial \overline{r}}=o(\frac{1}{\lambda^{1-\epsilon}}), 
\end{equation}
and 
$$
-\frac{A_1}{\lambda^{3}}+\frac{m^{N-2}A_3}{\lambda^{N-1}}=O(\frac{1}{\lambda^{3+\varepsilon}}).
$$
Let $\lambda=tm^{\frac{N-2}{N-4}}$, so that $t\in[L_0, L_1]$. Therefore, we have 
\begin{equation}\label{d23}
-\frac{A_1}{t^{3}}+\frac{A_3}{t^{N-1}}=o(1), t\in[L_0, L_1].
\end{equation}
Let
$$
F(t, \overline{r},\overline{x}'')=\big(\nabla_{\overline{r},\overline{x}''}(K(\overline{r},\overline{x}'')),
-\frac{A_1}{t^{3}}+\frac{A_3}{t^{N-1}}\big).
$$
Then
$$
\mbox{deg}(F(t, \overline{r},\overline{x}''), [L_0, L_1]\times B_{\theta}((r_{0},x_{0}'')))
=-\mbox{deg}(\nabla_{\overline{r},\overline{x}''}(K(\overline{r},x_{0}'')),B_{\frac{1}{\lambda^{1-\theta}}}((r_{0},x_{0}'')))\neq0.
$$
So, \eqref{d21}, \eqref{d22} and \eqref{d23} have a solution $t_{m}\in[L_0, L_1]$, $(\overline{r}_{m},\overline{x}_{m}'')\in B_{\frac{1}{\lambda^{1-\theta}}}((r_{0},x_{0}''))$.
$\hfill{} \Box$

\begin{appendix}
\section{Some basic estimates}
\begin{lem}\label{B2} (Lemma B.1, \cite{WY1}) For each fixed $k$ and $j$, $k\neq j$,  let
	$$
	g_{k,j}(x)=\frac{1}{(1+|x-z_{j}|)^{\alpha}}\frac{1}{(1+|x-z_{k}|)^{\beta}},
	$$
	where $\alpha\geq 1$ and $\beta\geq1$ are two constants.
	Then, for any constants $0<\delta\leq\min\{\alpha,\beta\}$, there is a constant $C>0$, such that
	$$
	g_{k,j}(x)\leq\frac{C}{|z_{k}-z_{j}|^{\delta}}\Big(\frac{1}{(1+|x-z_{j}|)^{\alpha+\beta-\delta}}+\frac{1}{(1+|x-z_{k}|)^{\alpha+\beta-\delta}}\Big).
	$$
\end{lem}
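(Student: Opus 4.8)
The plan is to establish the pointwise bound in Lemma~\ref{B2} by a single elementary case distinction, according to which of the two centres is nearer to the running point $x$, combined with the triangle inequality $|z_k-z_j|\le |x-z_j|+|x-z_k|$. Note first that the claimed inequality is symmetric under interchanging the pairs $(z_j,\alpha)$ and $(z_k,\beta)$, and that the two terms on the right-hand side correspond precisely to these two cases; hence it suffices to work on the region $\{x:\ |x-z_j|\le |x-z_k|\}$ and to prove that there $g_{k,j}(x)\le C\,|z_k-z_j|^{-\delta}(1+|x-z_j|)^{-(\alpha+\beta-\delta)}$, the complementary region being handled identically after relabelling and producing the second term.

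On that region I would argue as follows. Since $|z_k-z_j|\le|x-z_j|+|x-z_k|\le 2|x-z_k|\le 2(1+|x-z_k|)$, we get $|z_k-z_j|^{\delta}\le 2^{\delta}(1+|x-z_k|)^{\delta}$, that is $(1+|x-z_k|)^{-\delta}\le 2^{\delta}|z_k-z_j|^{-\delta}$. Next I rewrite the trivial identity $g_{k,j}(x)=(1+|x-z_j|)^{-(\alpha+\beta-\delta)}\,(1+|x-z_j|)^{\beta-\delta}(1+|x-z_k|)^{-\beta}$, which merely reshuffles the exponents (the powers of $1+|x-z_j|$ add up to $-\alpha$). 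Because $0<\delta\le\beta$ we have $\beta-\delta\ge 0$, so on $\{|x-z_j|\le|x-z_k|\}$ one may bound $(1+|x-z_j|)^{\beta-\delta}\le(1+|x-z_k|)^{\beta-\delta}$, and the last two factors collapse to $(1+|x-z_k|)^{-\delta}$. Combining this with the previous display yields $g_{k,j}(x)\le 2^{\delta}\,|z_k-z_j|^{-\delta}(1+|x-z_j|)^{-(\alpha+\beta-\delta)}$, the desired estimate with $C=2^{\delta}$; adding the symmetric estimate from $\{|x-z_k|\le|x-z_j|\}$ finishes the proof.

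I do not expect any genuine obstacle here: this is a book-keeping lemma of the type invoked repeatedly throughout the finite-dimensional reduction. The only point requiring a little care is to arrange the factorisation so that the inequality $\beta-\delta\ge 0$ (equivalently $\alpha-\delta\ge 0$ in the symmetric case) is applied to the correct power of $1+|x-z_j|$; the opposite choice would leave behind a positive, non-helpful power of $1+|x-z_j|$. It is also worth recording that no lower bound on the separation $|z_k-z_j|$ is needed: when $|z_k-z_j|$ is small the right-hand side only increases, so the inequality is trivially valid there, and the hypotheses $\alpha,\beta\ge 1$ play no essential role beyond ensuring that the admissible range $0<\delta\le\min\{\alpha,\beta\}$ is non-empty.
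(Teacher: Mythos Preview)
Your proof is correct and is precisely the standard argument for this estimate; the paper does not supply its own proof but merely cites Lemma~B.1 of \cite{WY1}, where the same case-splitting and triangle-inequality computation appears. There is nothing to compare beyond noting that your remark about $\alpha,\beta\ge 1$ being inessential is accurate: only $\delta\le\min\{\alpha,\beta\}$ is used.
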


\begin{lem}\label{B3} (Lemma B.2, \cite{WY1}) For any constant $0<\delta<N-2$, $N\geq5$, there is a constant $C>0$, such that
	$$
	\int_{\mathbb{R}^{N}}\frac{1}{|x-y|^{N-2}}\frac{1}{(1+|y|)^{2+\delta}}dy\leq \frac{C}{(1+|x|)^{\delta}}.
	$$
\end{lem}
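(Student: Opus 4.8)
The plan is to establish the pointwise bound by a region decomposition in the integration variable $y$, controlling each piece by the triangle inequality and radial (polar-coordinate) integration. Write $r=|x|$. The regime $r\le 1$ is disposed of at once: on $\{|y|\le 2\}$ one has $(1+|y|)^{-(2+\delta)}\le 1$ and $\int_{\{|x-y|\le 3\}}|x-y|^{-(N-2)}\,dy$ is a finite constant since $N-2<N$; on $\{|y|>2\}$ one has $|x-y|\ge|y|/2$, so the integrand is $\le C|y|^{-(N+\delta)}$, which is integrable near infinity because $\delta>0$. Hence the left-hand side is $\le C\le C(1+|x|)^{-\delta}$ when $r\le 1$, and it remains to treat $r\ge 1$, where $(1+|x|)^{-\delta}\sim r^{-\delta}$.

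For $r\ge1$ I would split $\mathbb{R}^N=D_1\cup D_2\cup D_3\cup D_4$ with $D_1=\{|y|\le r/2\}$, $D_2=\{|x-y|\le r/2\}$, $D_3=\{|y|>r/2,\ |x-y|>r/2,\ |y|<2r\}$, and $D_4=\{|y|\ge2r\}$ (on $D_4$ one automatically has $|x-y|\ge|y|-|x|\ge|y|/2>r/2$, so these sets cover $\mathbb{R}^N$). On $D_1$ the triangle inequality gives $|x-y|\ge r/2$, so the kernel is $\le Cr^{-(N-2)}$, while $\int_{\{|y|\le r/2\}}(1+|y|)^{-(2+\delta)}\,dy\le Cr^{N-2-\delta}$ because $2+\delta<N$; the product is $\le Cr^{-\delta}$. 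On $D_2$ one has $|y|\ge r/2$, so the weight is $\le Cr^{-(2+\delta)}$, and $\int_{\{|x-y|\le r/2\}}|x-y|^{-(N-2)}\,dy=Cr^{2}$, giving again $\le Cr^{-\delta}$. On $D_3$ both $|y|>r/2$ and $r/2<|x-y|\le 3r$ hold, so the weight is $\le Cr^{-(2+\delta)}$ and $\int_{D_3}|x-y|^{-(N-2)}\,dy\le\int_{\{r/2<|z|<3r\}}|z|^{-(N-2)}\,dz\le Cr^{2}$, so this term is $\le Cr^{-\delta}$. Finally on $D_4$ we use $|x-y|\ge|y|/2$, so the integrand is $\le C|y|^{-(N+\delta)}$ and $\int_{\{|y|\ge2r\}}|y|^{-(N+\delta)}\,dy=Cr^{-\delta}$. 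Summing the four contributions yields the assertion.

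The argument is elementary, and the only point demanding attention — and where the hypotheses on $\delta$ enter — is the balance between the two exponents. Because $0<\delta<N-2$, the weight $(1+|y|)^{-(2+\delta)}$ is \emph{not} globally integrable (indeed $\int_{\{|y|\le R\}}(1+|y|)^{-(2+\delta)}\,dy\sim R^{N-2-\delta}$ grows), so one cannot simply bound the convolution by $\sup_y|x-y|^{-(N-2)}$ times a global integral; the growth collected in $D_1$ and $D_3$ must be exactly cancelled by the $r^{-(N-2)}$, respectively $r^{-(2+\delta)}$, decay of the frozen factor, and in the genuinely far region $D_4$ one has instead to transfer decay from the weight to the kernel via $|x-y|\ge|y|/2$ and invoke $\delta>0$ for convergence at infinity. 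Bookkeeping of these two thresholds ($2+\delta<N$ for $D_1$ and $D_3$, and $\delta>0$ for $D_4$) is the main obstacle; everything else reduces to the triangle inequality and one-dimensional radial integrals.
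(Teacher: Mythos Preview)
Your proof is correct. The paper does not supply its own proof of this lemma: it is quoted verbatim from Wei--Yan \cite{WY1} (Lemma~B.2 there) and invoked as a black box, so there is nothing to compare your argument against. Your four-region decomposition is the standard route to such weighted convolution bounds, and you have correctly identified where each endpoint of the hypothesis $0<\delta<N-2$ is used (the upper bound in $D_1$/$D_3$, the lower bound in $D_4$).
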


\begin{lem}\label{B4} 
	For $N>5$ and $1\leq i\leq m$, there is a constant $C>0$, such that
	$$
	|x|^{-\alpha}\ast \frac{\lambda^{N-\frac{\alpha}{2}}}{(1+\lambda|x-z_{i}|)^{\frac{3N+2}{2}-\alpha+\eta}}\leq \frac{C\lambda^{\frac{\alpha}{2}}}{(1+\lambda|x-z_{i}|)^{\min\{\alpha,\frac{N+2}{2}\}}},
	$$
	where $\eta>0$.
\end{lem}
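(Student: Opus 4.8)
The plan is to strip off the scaling, reduce the statement to a scale–invariant convolution estimate for the Riesz kernel, and then run a standard three–region decomposition while carefully tracking exponents. \textbf{Step 1 (rescaling).} Write $g(w)=\lambda^{N-\frac{\alpha}{2}}(1+\lambda|w-z_i|)^{-\beta}$ with $\beta:=\frac{3N+2}{2}-\alpha+\eta$, substitute $w=z_i+t/\lambda$ in $\big(|x|^{-\alpha}\ast g\big)(x)$, and set $\xi:=\lambda(x-z_i)$. Since $|x-w|^{-\alpha}=\lambda^{\alpha}|\xi-t|^{-\alpha}$ and $dw=\lambda^{-N}\,dt$,
$$
\big(|x|^{-\alpha}\ast g\big)(x)=\lambda^{\frac{\alpha}{2}}\,I(\xi),\qquad I(\xi):=\int_{\mathbb{R}^N}\frac{dt}{|\xi-t|^{\alpha}(1+|t|)^{\beta}}.
$$
Because $0<\alpha<N$ and $\alpha+\beta=\frac{3N+2}{2}+\eta>N$, the integral $I(\xi)$ converges, and the lemma follows once we show $I(\xi)\le C\,(1+|\xi|)^{-\min\{\alpha,\frac{N+2}{2}\}}$.

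\textbf{Step 2 (the model estimate).} For $|\xi|\le1$ this is immediate, since $I(\xi)$ is bounded uniformly on bounded sets (split near $t=\xi$, where $\alpha<N$, and away from it, where $\alpha+\beta>N$) while the right–hand side is bounded below. For $|\xi|\ge1$ write $\mathbb{R}^N=A_1\cup A_2\cup A_3$ with $A_1=\{|t-\xi|\le\frac{|\xi|}{2}\}$, $A_2=\{|t|\le\frac{|\xi|}{2}\}$, $A_3=\mathbb{R}^N\setminus(A_1\cup A_2)$. On $A_1$, $1+|t|\sim1+|\xi|$, so $\int_{A_1}\le C(1+|\xi|)^{-\beta}\int_{|s|\le|\xi|/2}|s|^{-\alpha}\,ds\le C(1+|\xi|)^{N-\alpha-\beta}$. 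On $A_2$, $|\xi-t|\sim|\xi|$, so $\int_{A_2}\le C(1+|\xi|)^{-\alpha}\int_{|t|\le|\xi|/2}(1+|t|)^{-\beta}\,dt$, which is $C(1+|\xi|)^{-\alpha}$, $C(1+|\xi|)^{-\alpha}\log(2+|\xi|)$, or $C(1+|\xi|)^{N-\alpha-\beta}$ according to whether $\beta>N$, $\beta=N$, or $\beta<N$. On $A_3$, splitting into $\frac{|\xi|}{2}\le|t|\le2|\xi|$ (where $|\xi-t|\gtrsim|\xi|$, $1+|t|\sim1+|\xi|$, volume $\sim|\xi|^N$) and $|t|>2|\xi|$ (where $|\xi-t|\sim|t|$ and $\int_{|t|>2|\xi|}|t|^{-\alpha-\beta}\,dt\le C|\xi|^{N-\alpha-\beta}$ since $\alpha+\beta>N$) gives $\int_{A_3}\le C(1+|\xi|)^{N-\alpha-\beta}$. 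Using $N-\alpha-\beta\le-\alpha$ when $\beta\ge N$ to absorb the $A_1,A_3$ terms, we get
$$
I(\xi)\le
\begin{cases}
C(1+|\xi|)^{-\alpha}, & \beta>N,\\
C(1+|\xi|)^{-\alpha}\log(2+|\xi|), & \beta=N,\\
C(1+|\xi|)^{N-\alpha-\beta}, & \beta<N.
\end{cases}
$$

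\textbf{Step 3 (exponent bookkeeping).} With $\beta=\frac{3N+2}{2}-\alpha+\eta$ one has $N-\alpha-\beta=-\frac{N+2}{2}-\eta$ and $\beta>N\Leftrightarrow\alpha<\frac{N+2}{2}+\eta$. If $\alpha\le\frac{N+2}{2}$ then $\beta>N$, so $I(\xi)\le C(1+|\xi|)^{-\alpha}$ with $\min\{\alpha,\frac{N+2}{2}\}=\alpha$. If $\alpha>\frac{N+2}{2}$ then $\min\{\alpha,\frac{N+2}{2}\}=\frac{N+2}{2}$, and: for $\frac{N+2}{2}<\alpha<\frac{N+2}{2}+\eta$, $I(\xi)\le C(1+|\xi|)^{-\alpha}\le C(1+|\xi|)^{-\frac{N+2}{2}}$; for $\alpha>\frac{N+2}{2}+\eta$, $I(\xi)\le C(1+|\xi|)^{-\frac{N+2}{2}-\eta}\le C(1+|\xi|)^{-\frac{N+2}{2}}$; and at the borderline $\alpha=\frac{N+2}{2}+\eta$, $I(\xi)\le C(1+|\xi|)^{-\frac{N+2}{2}-\eta}\log(2+|\xi|)\le C(1+|\xi|)^{-\frac{N+2}{2}}$, absorbing the logarithm into the surplus power $(1+|\xi|)^{-\eta}$. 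Undoing the rescaling of Step 1 completes the proof, with constants uniform in $i$ and $\lambda$.

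\textbf{Main obstacle.} There is no genuine difficulty here: the only delicate point is the exponent bookkeeping across the three regions, and in particular the control of the logarithmic factor that appears in the critical case $\beta=N$. This is precisely where the strictly positive shift $\eta>0$ in the hypothesis is needed — it supplies an extra decay $(1+|\xi|)^{-\eta}$ that simultaneously kills the logarithm and upgrades the decay rate to $-\frac{N+2}{2}$ in the regime of large $\alpha$.
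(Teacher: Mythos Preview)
Your proof is correct and follows essentially the same strategy as the paper: after the same rescaling to reduce to bounding $I(\xi)=\int |\xi-t|^{-\alpha}(1+|t|)^{-\beta}\,dt$, both you and the paper perform a three-region decomposition centered at the two singular points with radius $|\xi|/2$, obtaining the same exponents on each piece. Your case analysis in Step~3 is slightly more explicit (the paper absorbs the borderline $\beta=N$ case without comment), but the argument is identical in substance.
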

\begin{proof}
	Notice first that
	$$\aligned
	|x|^{-\alpha}\ast \frac{\lambda^{N-\frac{\alpha}{2}}}{(1+\lambda|x-z_{i}|)^{\frac{3N+2}{2}-\alpha+\eta}}=&
	\int_{\mathbb{R}^{N}}\frac{1}{|y|^{\alpha}}\frac{\lambda^{N-\frac{\alpha}{2}}}{(1+\lambda|x-z_{i}-y|)^{\frac{3N+2}{2}-\alpha+\eta}}dy\\
	=&
	\int_{\mathbb{R}^{N}}\frac{1}{|y|^{\alpha}}\frac{\lambda^{\frac{\alpha}{2}}}{(1+|\lambda x-\lambda z_{i}-y|)^{\frac{3N+2}{2}-\alpha+\eta}}dy.
	\endaligned$$
	
	Let $d=\frac{\lambda}{2}|x-z_{i}|>1$. Then, we have
	$$\aligned
	\int_{B_{d}(0)}\frac{1}{|y|^{\alpha}}\frac{1}{(1+|\lambda x-\lambda z_{i}-y|)^{\frac{3N+2}{2}-\alpha+\eta}}dy&\leq \frac{C}{(1+ d)^{\frac{3N+2}{2}-\alpha+\eta}}\int_{B_{d}(0)}\frac{1}{|y|^{\alpha}}dy\\
	&\leq \frac{C d^{N-\alpha}}{(1+ d)^{\frac{3N+2}{2}-\alpha+\eta}}\leq\frac{ C}{(1+ d)^{\frac{N+2}{2}+\eta}},
	\endaligned$$
	and
	$$\aligned
	&\int_{B_{d}(\lambda x-\lambda z_{i})}\frac{1}{|y|^{\alpha}}\frac{1}{(1+|\lambda x-\lambda z_{i}-y|)^{\frac{3N+2}{2}-\alpha+\eta}}dy\\
	&\hspace{5mm}\leq \frac{1}{d^{\alpha}}\int_{B_{d}(0)}\frac{1}{(1+|y|)^{\frac{3N+2}{2}-\alpha+\eta}}dy
	\leq \frac{C}{(1+ d)^{\min\{\alpha, \frac{N+2}{2}\}}}.
	\endaligned$$

\noindent Assume $y\in\mathbb{R}^{N}\backslash (B_{d}(0)\cup B_{d}(\lambda x-\lambda z_{i}))$. Then,
	$$
	|\lambda x-\lambda z_{i}-y|\geq\frac{1}{2}|\lambda x-\lambda z_{i}|, |y|\geq\frac{1}{2}|\lambda x-\lambda z_{i}|
	$$
	and we have
	$$
	\frac{1}{|y|^{\alpha}}\frac{1}{(1+|\lambda x-\lambda z_{i}-y|)^{\frac{3N+2}{2}-\alpha+\eta}}\leq \frac{ C}{(1+ d)^{\frac{N+2}{2}}}\frac{1}{|y|^{\alpha}}\frac{1}{(1+|\lambda x-\lambda z_{i}-y|)^{N-\alpha+\eta}}.
	$$
	If $|y|\leq2|\lambda x-\lambda z_{i}|$, then
	$$
	\frac{1}{|y|^{\alpha}}\frac{1}{(1+|\lambda x-\lambda z_{i}-y|)^{N-\alpha+\eta}}\leq
	\frac{C}{|y|^{\alpha}(1+|\lambda x-\lambda z_{i}|)^{N-\alpha+\eta}}\leq
	\frac{C_{1}}{|y|^{\alpha}(1+|y|)^{N-\alpha+\eta}}.
	$$
	If $|y|\geq 2|\lambda x-\lambda z_{i}|$, then $|\lambda x-\lambda z_{i}-y|\geq|y|-|\lambda x-\lambda z_{i}|\geq\frac{1}{2}|y|$. As a consequence,
	$$
	\frac{1}{|y|^{\alpha}}\frac{1}{(1+|\lambda x-\lambda z_{i}-y|)^{N-\alpha+\eta}}\leq
	\frac{C}{|y|^{\alpha}(1+|y|)^{N-\alpha+\eta}}.
	$$
	Thus, we have
	$$\aligned
	&\int_{\mathbb{R}^{N}\backslash (B_{d}(0)\cup B_{d}(\lambda x-\lambda z_{i}))}\frac{1}{|y|^{\alpha}}\frac{1}{(1+|\lambda x-\lambda z_{i}-y|)^{\frac{3N+2}{2}-\alpha+\eta}}dy\\
	&\leq \frac{ C}{(1+ d)^{\frac{N+2}{2}}}\int_{\mathbb{R}^{N}}\frac{1}{|y|^{\alpha}}\frac{1}{(1+|y|)^{N-\alpha+\eta}}dy
	\leq \frac{ C}{(1+ d)^{\frac{N+2}{2}}}.
	\endaligned$$
\end{proof}
\noindent Using \eqref{REL} and the identity (see (37) in \cite{DHQWF} for instance)
\begin{equation}\label{eq3.18}
	\int_{\R^N}\frac{1}{|x-y|^{2s}}\Big(\frac{1}{1+|y|^{2}}\Big)^{N-s}dy
	=I(s)\Big(\frac{1}{1+|x|^{2}}\Big)^{s},\ \ 0 < s < \frac{N}{2},
\end{equation}
where
$$
I(s)=\frac{\pi^{\frac{N}{2}}\Gamma(\frac{N-2s}{2})}{\Gamma(N-s)}, \ \ \mbox{and }\Gamma(s)=\int_0^{+\infty} x^{s-1}e^{-x}\,dx, s>0,
$$
we have
$$
|x|^{-\alpha}\ast |U_{z,\lambda}(x)|^{2^{\ast}_{\alpha}}
=\int_{\R^N}\frac{U_{z,\lambda}^{2^{\ast}_{\alpha}}(y)}{|x-y|^{\alpha}}dy
=C\Big(\frac{\lambda}{1+\lambda^2|x-z|^{2}}\Big)^{\frac{\alpha}{2}},
$$
where $N\geq9$ and $C=I(\frac{\alpha}{2})C^{2^{\ast}_{\alpha}}(N,\alpha)$. Finally, we have the following lemma
\begin{lem}\label{P0}
	For $N=6$ and $1\leq i\leq m$, there is a constant $C>0$, such that
	$$
	|x|^{-\alpha}\ast |U_{z_{i},\lambda}(x)|^{2^{\ast}_{\alpha}}
	=C\Big(\frac{\lambda}{1+\lambda^2|x-z|^{2}}\Big)^{\frac{\alpha}{2}}.
	$$
\end{lem}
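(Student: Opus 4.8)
The plan is to reduce everything to the normalized bubble and then invoke the convolution identity \eqref{eq3.18} with $s=\tfrac{\alpha}{2}$. First I would use \eqref{REL} to write $U_{z_i,\lambda}(y)=\kappa_N\big(\tfrac{\lambda}{1+\lambda^2|y-z_i|^2}\big)^{\frac{N-2}{2}}$ with the explicit positive constant $\kappa_N=S^{\frac{(N-\alpha)(2-N)}{4(N-\alpha+2)}}C(N,\alpha)^{\frac{2-N}{2(N-\alpha+2)}}[N(N-2)]^{\frac{N-2}{4}}$, so that
\[
U_{z_i,\lambda}^{2^{\ast}_{\alpha}}(y)=\kappa_N^{2^{\ast}_{\alpha}}\,\lambda^{\frac{N-2}{2}2^{\ast}_{\alpha}}\big(1+\lambda^2|y-z_i|^2\big)^{-\frac{N-2}{2}2^{\ast}_{\alpha}}.
\]
The only arithmetic input is the exponent identity $\tfrac{N-2}{2}\,2^{\ast}_{\alpha}=\tfrac{N-2}{2}\cdot\tfrac{2N-\alpha}{N-2}=N-\tfrac{\alpha}{2}$, which for $N=6$ reads $2\cdot\tfrac{12-\alpha}{4}=6-\tfrac{\alpha}{2}$; hence $U_{z_i,\lambda}^{2^{\ast}_{\alpha}}(y)=\kappa_N^{2^{\ast}_{\alpha}}\lambda^{N-\frac{\alpha}{2}}\big(1+\lambda^2|y-z_i|^2\big)^{-(N-s)}$ with $s=\tfrac{\alpha}{2}$.

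Next I would perform the change of variables $y=z_i+\lambda^{-1}w$ in the convolution integral $\int_{\R^N}|x-y|^{-\alpha}U_{z_i,\lambda}^{2^{\ast}_{\alpha}}(y)\,dy$; since $|x-y|^{-\alpha}=\lambda^{\alpha}|\lambda(x-z_i)-w|^{-\alpha}$ and $dy=\lambda^{-N}dw$, the powers of $\lambda$ combine to $\lambda^{(N-\frac{\alpha}{2})+\alpha-N}=\lambda^{\frac{\alpha}{2}}$, so the integral becomes $\kappa_N^{2^{\ast}_{\alpha}}\lambda^{\frac{\alpha}{2}}\int_{\R^N}\frac{1}{|\lambda(x-z_i)-w|^{2s}}\big(\tfrac{1}{1+|w|^2}\big)^{N-s}\,dw$. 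The standing hypotheses give $5-\tfrac{6}{N-2}<\alpha<N$, so for $N=6$ we have $\alpha\in(\tfrac72,6)$, whence $s=\tfrac{\alpha}{2}\in(\tfrac74,3)\subset(0,\tfrac N2)$; this guarantees both convergence of the integral and the applicability of \eqref{eq3.18}. Applying \eqref{eq3.18} with the role of ``$x$'' played by $\lambda(x-z_i)$ yields $\int_{\R^N}\frac{1}{|\lambda(x-z_i)-w|^{2s}}\big(\tfrac{1}{1+|w|^2}\big)^{N-s}\,dw=I(s)\big(\tfrac{1}{1+\lambda^2|x-z_i|^2}\big)^{s}$, and substituting back gives
\[
|x|^{-\alpha}\ast U_{z_i,\lambda}^{2^{\ast}_{\alpha}}(x)=\kappa_N^{2^{\ast}_{\alpha}}I\big(\tfrac{\alpha}{2}\big)\,\lambda^{\frac{\alpha}{2}}\big(1+\lambda^2|x-z_i|^2\big)^{-\frac{\alpha}{2}}=C\Big(\frac{\lambda}{1+\lambda^2|x-z_i|^2}\Big)^{\frac{\alpha}{2}},
\]
with $C=\kappa_N^{2^{\ast}_{\alpha}}I\big(\tfrac{\alpha}{2}\big)$; this is the same positive constant $I(\tfrac{\alpha}{2})C^{2^{\ast}_{\alpha}}(N,\alpha)$ recorded in the display preceding the statement, and it is strictly positive because $I(s)=\pi^{N/2}\Gamma(\tfrac{N-2s}{2})/\Gamma(N-s)>0$ for $0<s<\tfrac N2$.

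There is no genuine obstacle here: Lemma \ref{P0} is exactly the $N=6$ instance of the computation displayed just above it, which was spelled out only for $N\ge 9$ because that is the regime used in the estimates elsewhere. The two points that require (trivial) care are the exponent identity $\tfrac{N-2}{2}2^{\ast}_{\alpha}=N-\tfrac{\alpha}{2}$ and the check that the standing range $5-\tfrac{6}{N-2}<\alpha<N$ places $s=\tfrac{\alpha}{2}$ strictly inside $(0,\tfrac N2)$ when $N=6$, so that \eqref{eq3.18} may legitimately be invoked; once these are in hand, \eqref{eq3.18} closes the argument at once.
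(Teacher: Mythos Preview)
Your proposal is correct and follows exactly the paper's own approach: the computation displayed immediately before Lemma~\ref{P0} (using \eqref{REL} together with the convolution identity \eqref{eq3.18} at $s=\alpha/2$) is precisely what you carry out, with the extra care of verifying that $s\in(0,N/2)$ for $N=6$ under the standing assumption on $\alpha$. The only cosmetic point is that your identification of the constant $\kappa_N^{2^{\ast}_{\alpha}}I(\alpha/2)$ with the paper's recorded $I(\alpha/2)C^{2^{\ast}_{\alpha}}(N,\alpha)$ glosses over the additional $S$ and $[N(N-2)]$ factors in $\kappa_N$, but since the lemma only asserts existence of some $C>0$ this is immaterial.
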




\vspace{1cm}
\end{appendix}

\end{document}